\definecolor{darkergreen}{rgb}{0.0, 0.5, 0.0}
\numberwithin{equation}{section}
\newcommand{\be}{\begin{eqnarray}}
	\newcommand{\ee}{\end{eqnarray}}
\newcommand{\ce}{\begin{eqnarray*}}
	\newcommand{\de}{\end{eqnarray*}}
\newtheorem{theorem}{Theorem}[section]
\newtheorem{lemma}[theorem]{Lemma}
\newtheorem{remark}[theorem]{Remark}
\newtheorem{definition}[theorem]{Definition}
\newtheorem{proposition}[theorem]{Proposition}
\newtheorem{Examples}[theorem]{Example}
\newtheorem{corollary}[theorem]{Corollary}
\newtheorem*{theorem*}{Theorem}
\newtheorem*{remark*}{Remark}
\newcommand{\assign}{:=}
\newcommand{\cdummy}{\cdot}
\newcommand{\comma}{{,}}
\newcommand{\mathd}{\mathrm{d}}
\newcommand{\nocomma}{}
\def\eps{\varepsilon}
\def\d{\hh}
\def\[{{\Big[}}
\def\]{{\Big]}}
\def\<{{\langle}}
\def\>{{\rangle}}
\def\({{\Big(}}
\def\){{\Big)}}
\def\bx{{\mathbf{x}}}
\def\dif{{\mathord{{\rm d}}}}
\def\={&\!\!=\!\!&}
\def\mT{{\mathbb T}}
\def\1{{\mathbf{1}}}
\def\geq{\geqslant}
\def\leq{\leqslant}
\def\eps{\varepsilon}
\def\d{\hh}
\def\[{{\Big[}}
\def\]{{\Big]}}
\def\<{{\langle}}
\def\>{{\rangle}}
\def\({{\Big(}}
\def\){{\Big)}}
\def\bx{{\mathbf{x}}}
\def\dif{{\mathord{{\rm d}}}}
\def\={&\!\!=\!\!&}
\def\bt{\begin{theorem}}
	\def\et{\end{theorem}}
\def\bl{\begin{lemma}}
	\def\el{\end{lemma}}
\def\br{\begin{remark}}
	\def\er{\end{remark}}
\def\bx{\begin{Examples}}
	\def\ex{\end{Examples}}
\def\bd{\begin{definition}}
	\def\ed{\end{definition}}
\def\bp{\begin{proposition}}
	\def\ep{\end{proposition}}
\def\bc{\begin{corollary}}
	\def\ec{\end{corollary}}
\def\geq{\geqslant}
\def\leq{\leqslant}
 \def\R{\mathbb R}
 \def\R{\mathbb R}    
\def\N{\mathbb N}  
\def\<{\langle} \def\>{\rangle}
\def\hh{{h}}
\begin{document}

	\title{The fluctuation behaviour  of the stochastic point vortex model with common noise}
	
	\author{Yufei Shao}
	\address[Y.Shao]{School of Mathematics and Statistics, Beijing Institute of Technology, Beijing 100081, China
	}
	\email{yufeishao@bit.edu.cn}

	\author{Xianliang Zhao}
	\address[X. Zhao]{ Academy of Mathematics and Systems Science,
		Chinese Academy of Sciences, Beijing 100190, China; Fakult\"at f\"ur Mathematik, Universit\"at Bielefeld, D-33501 Bielefeld, Germany}
	\email{xzhao@math.uni-bielefeld.de}

	\begin{abstract}
		This article studies the fluctuation behaviour of the stochastic point vortex model with common noise. Using the martingale method combined with a localization argument, we prove that the sequence of fluctuation processes converges in distribution to the unique probabilistically strong solution of a linear stochastic evolution equation. In particular, we establish the strong convergence from the stochastic point vortex model with common noise to the conditional McKean-Vlasov equation.
	\end{abstract}

	\keywords{Interacting particle system, Fluctuation,   Biot-Savart kernel, Environmental noise }
	
	\date{\today}
	\thanks{Y.S. is grateful
		to the financial supports by  National Key R\&D Program of China (No. 2022YFA1006300) and the financial supports of the NSFC (No. 12271030, No.12426205). X.Z. is supported by the DFG through the CRC 1283 “Taming uncertainty and profiting from randomness and low regularity in analysis, stochastics and their applications”.}
	\maketitle
	
	\setcounter{tocdepth}{2}

	\tableofcontents

	\section{Introduction}
In this article, 	we investigate the fluctuation behavior of  the following weakly interacting particle system  with common noise  on torus $\mathbb{T}^2=[-\pi,\pi]^2,$ 
\begin{equation}\label{eqt:vortex}
	X^N_i(t)=X_i(0)+\frac{1}{N}\sum_{j\neq i}\int_0^tK(X^N_i(s)-X^N_j(s))\mathd s+\sqrt{2} B_i(t)+\int_0^t\sigma(X^N_i(s))\circ\mathd W_s,\quad t\in [0,T].
\end{equation}
This system is commonly referred to as the stochastic point vortex model.	Here $T>0,N\in \mathbb{N}$ and $\circ$ denotes the stochastic integral  in the Stratonovich sense. The interaction between particles is defined by   the Biot-Savart kernel $K$ on $\mathbb{T}^2$, namely 
	\begin{align}
		K(x)=-\frac{1}{2\pi}\frac{x^{\perp}}{|x|^2}+K_0(x), \quad x^{\perp }=(x_2,-x_1),\quad x=(x_1,x_2)\in \mathbb{T}^2,
	\end{align}
	where $K_0$ is a smooth correction to periodize $K$ on  torus $\mathbb{T}^2=[-\pi,\pi]^{2}.$ The common  noise shared by all particles is described by the term $\int_0^{\cdot}\sigma(X_i(s))\circ\mathd W_s,$  where $\sigma$ is a smooth and  divergence free vector field and $\{W_t ,t\in[0,T]\}$  represents a 1-dimensional standard Brownian motion.   Additionally,  $\{B_i, i\in \mathbb{N}\}$ are independent 2-dimensional Brownian motions  on torus $\mathbb{T}^2,$ modeling  the individual noise for each particle. The initial positions of the particles are given by a sequence of independent and identically distributed (i.i.d.) random variables $\{X^N_i(0),i\in \mathbb{N}\}$ taking values in $\mathbb{T}^2.$ The identical distribution for initial values $\{X^N_i(0),i\in \mathbb{N}\}$  is denoted by $\mathcal{L}(X(0)).$   We further assume that the initial positions $\{X^N_i(0),i\in \mathbb{N}\},$  the individual noises $\{B_i,i\in \mathbb{N}\}$ and environmental noise $W$  are mutually independent.
	
	    The aim of this paper is to study the asymptotic behavior of the fluctuation process \begin{align}\label{def:eta}
	    	\eta^N_t:=\sqrt{N}(\mu_N(t)-v_t)=\frac{1}{\sqrt{N}}\sum_{i = 1}^N\left( \delta_{X^N_i(t)}-v_t\right),\quad \forall t\in[0,T] ,\end{align} which describes the deviations of the empirical measures of the stochastic point vortex model \eqref{eqt:vortex} 
	     \begin{align}\label{def:empirical measure}
	    	\mu_N(t)\assign\frac{\sum_{i=1}^{N}\delta_{X^N_i(t)}}{N},\quad \forall t\in[0,T]
	    \end{align} 
	   from  the mean field limit $(v_t)_{t\in[0,T]}.$ Here $(v_t)_{t\in[0,T]}$ is the unique probabilistically strong solution to the following stochastic 2-dimensional Navier-Stokes equation \eqref{eqt:mean} on   $ [0,T]\times\mathbb{T}^2$ in the sense of Definitions  \ref{def:mean-strong} and \ref{def:uni-mean} below :
	    \begin{equation}\label{eqt:mean}
	    	\mathd   v=\(\Delta   v-K*  v \cdot \nabla   v\)\mathd t-\sigma\cdot\nabla   v \circ \mathd W_t  ,   \quad   v(0,\cdot)=  v_0.
	    \end{equation} 
	    This study is often referred to as the central limit theorem for interacting particle systems  \eqref{eqt:vortex}.  Compared to the  mean field limit result \eqref{defmeanfieldlimit}, it provides a more precise description of the relationship between the interacting particle system  \eqref{eqt:vortex} and the mean field limit equation \eqref{eqt:mean}, i.e, formally $$\mu_N\overset{\text{d}}{\approx} v+\frac{1}{\sqrt{N}}\eta,$$
	      where  $\overset{\text{d}}{\approx}$ means that the approximation holds in distribution and $\eta$ represents the limiting  process of the sequence  $\{\eta^N\}_{N\in \mathbb{N}}$ in the sense of convergence in distribution. Furthermore, $\eta$ is typically Gaussian-distributed in the absence of environmental noise  $W.$
	 
	      The classical mean field limit for particle systems without common noise, such as \eqref{eqt:vortex} with $\sigma=0,$ has been extensively studied over the past decade. The goal of studying the mean field limit is to analyze the asymptotic independence of particles. This result can be expressed in two forms, which are qualitatively equivalent, as established in  \cite{sznitman1991topics}. One form is that, for every $t\in[0,T],$ the empirical measure of the particle system $\mu_N(t)$ satisfies the weak convergence of measures, \begin{align}\label{defmeanfieldlimit}
	      	\mu_N(t)\rightharpoonup v_t.
	      \end{align}
	     The other form, often referred to as  {\em propagation of chaos}, states that, for fixed $k\in \mathbb{N},t\in [0,T],$
	      \begin{align*}
	      	\mathcal{L}^{N,k}(t)\rightharpoonup v_t^{\otimes k},
	      \end{align*}
	      where $	\mathcal{L}^{N,k}(t)$ is the $k$-marginal of the  particle distribution $\mathcal{L}(X_1^N(t),\cdots, X_N^N(t))$  for \eqref{eqt:vortex} with $\sigma=0.$  
	 The limiting measure $(v_t)_{t\in[0,T]}$ is the  solution to the following  nonlinear Fokker-Planck equation \begin{equation*}
	 	\partial_t  v=\Delta   v-K*  v \cdot \nabla   v,
	 \end{equation*} and coincides with the law of a solution to the related Mckean Vlasov equation. We refer readers to the works \cite{osada1986propagation,sznitman1991topics,fournier2014propagation, duerinckx2016mean,jabin2018quantitative,serfaty2020mean,lacker2023hierarchies,bresch2023mean,feng2023quantitative,guillin2024uniform,wang2024sharp,carrillo2024relative}  for classical results on mean field limit and reference therein. For interacting particle systems \eqref{eqt:vortex} with various kernels $K,$ particular attention has been paid to systems with singular kernels due to their physical relevance. One of the most well-known examples is  {\em the   Biot-Savart kernel} $K,$  which is also the focus of our study in this paper. The corresponding particle system is often referred to as the stochastic point vortex model, which describes the behavior of fluid. 
 Recent advancements in this area have highlighted the relative entropy method, which not only ensures convergence results for particle systems \eqref{eqt:vortex} with singular kernels but also provides quantitative convergence rates. In our study, we address the central limit problem for the stochastic point vortex model, leveraging mean field limit results expressed through the relative entropy framework. Additionally, we use this method to derive uniform estimates. The global relative entropy method was developed by Jabin and Wang in \cite{jabin2016mean} for second-order systems with bounded kernels and in \cite{jabin2018quantitative} for first-order systems with general  $W^{-1,\infty}$ kernels on torus, including the   Biot-Savart kernel. Feng and Wang \cite{feng2023quantitative} recently extended quantitative particle approximation results for the 2-dimensional Navier-Stokes equations to the whole space. More recently,   Carrillo, Feng, Guo, Jabin and Wang \cite{carrillo2024relative} used the relative entropy method to study the particle  approximation of the   spatially homogeneous Landau equation for Maxwellian molecules. In \cite{lacker2023hierarchies}, Lacker developed a new local relative entropy method, achieving optimal quantitative estimates between $\mathcal{L}^{N,k}(t)$ and $v_t^{\otimes k}.$  Recently, Wang \cite{wang2024sharp} extended this approach to handle particle systems with  singular $W^{-1,\infty}$ kernels.

A key distinction of our model compared to classical particle systems lies in the introduction of environmental noise $W,$ which induces stochasticity in the mean field limit $(v_t)_{t\in[0,T]}.$  Specifically,  the  mean field limit $(v_t)_{t\in[0,T]}$ satisfies a stochastic nonlinear Fokker-Planck equation \eqref{eqt:mean} with transport noise, rather than a deterministic partial differential equation.  Moreover, $(v_t)_{t\in[0,T]}$ coincides with the conditional law of $\bar{X}_i(t),$ which is the solution to  the following conditional McKean-Vlasov equation \eqref{eqt:ncopy},  with respect to  the environmental noise $\{W_t,t\in [0,T]\},$ i.e.,  $v_t(\mathd x)=\mathcal{L}(\bar{X}_i(t)| \mathcal{F}^{W}_T)(\mathd x)$ in the sense of $\mathcal{P}(\mathbb{T}^2),$\footnote{ We use $\mathcal{P}(\mathbb{T}^2)$   to denote the space of probability measures on $\mathbb{T}^2.$} 
 \begin{equation}\label{eqt:ncopy}
 	\bar{X}_i(t)=X_i(0)+\int_0^tK*  v_s(\bar{X}_i(s))\mathd s+\sqrt{2}B_i(t)+\int_0^t\sigma(\bar{X}_i(s))\circ\mathd W_s.
 \end{equation}
This setup allows us to establish a {\em conditional propagation of chaos} type result.  For fixed $k\in \mathbb{N},$ as the number of particles $N\rightarrow \infty,$  we have
\begin{align*}
	F^{N,k}(t)\rightharpoonup\bar{F}^{N, k}(t)\assign v^{\otimes k}_t.
\end{align*} 
Here $F^{N,k}(t)(\mathd x^N)$ is the $k$-marginal of the conditional distribution $F^N(t)(\mathd x^N)$ of $X^N(t)$  with respect to  the environmental noise $\{W_t,t\in [0,T]\},$ defined by 
\begin{equation}\label{def:F}
	\mathcal{L}(X^N(t)|\mathcal{F}_{T}^{W})(\mathd x^N), 
\end{equation}
and $\bar{F}^{N,k}(t)(\mathd x^N)$ is indeed the $k$-marginal of the conditional distribution $\bar{F}^N(t)(\mathd x^N)$ of $\bar{X}^N(t)$  with respect to  the environmental noise $\{W_t,t\in [0,T]\},$ defined by
\begin{align}\label{def:barF}
\mathcal{L}(\bar{X}^N(t)| \mathcal{F}^{W}_T)(\mathd x^N).
\end{align}
We refer to \cite{shao2024quantitative} for more details.

One motivation for considering a particle system with common noise is its ability to describe environmental influences on particles across various fields. For example, in mathematical finance, such a model reflects the fact that a large financial market should include a common set of assets accessible to all agents (see e.g., \cite{lacker2015stochastic}). Additionally, it can help us study the phenomenon known as  regularization by noise (see e.g., \cite{flandoli2011random}). However, the literature on the mean field limit in the presence of environmental noise remains limited.  We refer readers to the works \cite{kurtz1999particle,coghi2016propagation,rosenzweig2020mean,nguyen2021mean,shao2024quantitative,nikolaev2024quantitative}.  Relative entropy method has also been developed to address the case of environmental noise. In \cite{shao2024quantitative}, we derived a quantitative particle approximation for the stochastic 2-dimensional Navier-Stokes equation. In \cite{nikolaev2024quantitative}, Nikolaev established the relative entropy method for particle systems with a general kernel  $K\in L^{\infty}(\mathbb{R}^d)\cap L^{2}(\mathbb{R}^d)$ and Ito's noise on the whole space. Another interesting result is presented in  \cite{flandoli2021mean,guo2023scaling}, where it is shown that the mean field limit equation can still be a deterministic partial differential equation by rescaling the space covariance of the noise as the number of particles increases.

Recently,  Wang, Zhao and Zhu \cite{wang2023gaussian}  study the limiting behavior of the fluctuation process $\eta^N$ for the interacting particle system \eqref{eqt:vortex} with $\sigma=0$ on torus $\mathbb{T}^d$ by martingale method. They focused on systems with singular kernels satisfying  $\|K\|_{L^\infty}<\infty,$ or $K(x)=-K(-x)$ and $\|xK(x)\|_{L^\infty}< \infty,$  which include important examples such as the Biot-Savart kernel. Using the  It\^o's formula,  the  fluctuation measure $\eta^N$ can be formally represented by the following SPDE
\begin{align}
d \eta^N_t= &\Delta\eta^N_t \mathd t-\nabla\cdot\big(vK\ast\eta^N+\eta^NK\ast v+\frac{1}{\sqrt{N}}\eta^NK\ast\eta^N\big)\mathd t\nonumber\\&+\frac{1}{2}\sigma\cdot\nabla(\sigma\cdot\nabla\eta^N_t)\mathd t+ \mathd \mathcal{M}^N_t-(\sigma\cdot\nabla\eta^N_t)\mathd W_t,
\end{align}
where $\mathcal{M}^N_t$ is a continuous stochastic process taking values in $H^{-\alpha-1}(\mathbb{T}^2)$ for every $\alpha>1,$ introduced in Section \ref{sec:compactness} below.
The most challenging part lies in studying the uniform estimates and convergence for the interacting term $$vK\ast\eta^N+\eta^NK\ast v+\frac{1}{\sqrt{N}}\eta^NK\ast\eta^N,$$ due to the singularity of the kernel  $K$ and the fluctaution measures $(\eta^N)_{N\geq 1}$( which live in negative Sobolev spaces). By the  relative entropy method, Wang, Zhao and Zhu \cite{wang2023gaussian}  addressed the challenging issues of uniform estimates and the convergence of interaction term. They showed the convergence from the fluctaution measures $(\eta^N)_{N\geq 1}$ to the solution $\eta$ of the fluctuation SPDE \eqref{LimitSpde}  with $\sigma=0$ below.
Building on the results  of \cite{wang2023gaussian}, this paper extends their framework to incorporate the presence of common noise.  Specifically, we demonstrate that  the sequence of fluctuation measures $(\eta^N)_{N\geq 1}$ for the stochastic point vortex model with common noise  \eqref{eqt:vortex}
converges in distribution to a stochastic process $\eta$ in the space $L^2 ([0, T], H^{-\alpha}) \cap C ([0, T], H^{- \alpha - 2})$  for every $\alpha>1.$  Here $\eta$ is the unique probabilistically  strong solution to the following fluctuation SPDE \eqref{LimitSpde} 
\begin{equation}\label{LimitSpde}
	d \eta_t= \Delta\eta_t \mathd t-\nabla\cdot(v_tK*\eta_t)\mathd t-\nabla\cdot(\eta_t K*v_t)\mathd t+\frac{1}{2}\sigma\cdot\nabla(\sigma\cdot\nabla\eta_t)\mathd t+ \mathd \mathcal{M}_t-(\sigma\cdot\nabla\eta_t)\mathd W_t,
\end{equation}
where  $\{v_t,t\in[0,T]\}$ is the unique  probabilistically  strong solution to the stochastic 2-dimensional Navier-Stokes equation \eqref{eqt:mean} with initial value $v_0\in H^3$ with strictly positive lower bound, i.e., $\inf_{\mathbb{T}^2}v_0>0.$ 
 Compared to the case without environmental noise $W$,  the final fluctuation  equation \eqref{LimitSpde} includes two  noise terms. The first is a multiplicative transport noise $\sigma\cdot \nabla \eta_t\mathd W_t,$ which arises from the environmental noise $W.$  The second is an additive noise $\mathcal{M}_t.$ As shown in \cite{wang2023gaussian},  $\{\mathcal{M}_t,t\in [0,T]\}$  is a continuous Gaussian process  taking values in $H^{-\alpha-1}(\mathbb{T}^2)$ for every $\alpha>1,$ in the absence of environmental noise $W.$ In the presence of environmental noise, it remains a continuous stochastic process taking values in $H^{-\alpha-1}(\mathbb{T}^2)$ for every $\alpha>1,$
  but it is no longer Gaussian.  Instead, its conditional distribution  satisfies  
 	\begin{align}\label{conditional law}
 	\begin{split}
 		\mathbb{E}\[\exp{i\left\langle \varphi,\mathcal{M}_t \right\rangle}\mid \mathcal{F}^{W}_T\]&=\exp\bigg\{-\int_{0}^{t}\langle \mid \nabla\varphi\mid^2,v_s\rangle\mathd s\bigg\},
 		\\ \mathbb{E}\[\exp{i\left\langle \varphi,(\mathcal{M}_{t+r}-\mathcal{M}_{t}) \right\rangle}\mid \mathcal{F}^{W}_T\vee\mathcal{F}^{\mathcal{M}}_t\]&=\exp\bigg\{-\int_{t}^{t+r}\langle \mid \nabla\varphi\mid^2,v_s\rangle\mathd s\bigg\},
 	\end{split}
 \end{align}
   for every $\varphi\in C^{\infty}(\mathbb{T}^2),0\leq t<t+r\leq T,$ where
 $(\mathcal{F}^{W}_t)_{t\in[0,T]}$ is the normal filtration generated by environmental noise $W$ and  $(\mathcal{F}^{\mathcal{M}}_t)_{t\in[0,T]}$ is the normal filtration generated by additive noise $\mathcal{M}.$ Therefore, it can be inferred that under the influence of environmental noise $W,$ the distribution of $\eta$ is no longer Gaussian. Further explanations on the conditional distribution of $\mathcal{M}_t$ can be found in Remark \ref{remark-m}.

\begin{theorem}\label{thm:main}
	Assume  that the identical distribution $\mathcal{L}(X(0))$ on torus $\mathbb{T}^2$ for the i.i.d. initial values $\{X^N_i(0),i\in \mathbb{N}\}$  has a  density $v_0\in H^{3}(\mathbb{T}^2)$ with strictly positive lower bound, i.e., $\inf_{\mathbb{T}^2}v_0>0.$  the sequence of fluctuation measures $\eta^N$ for the stochastic point vortex model \eqref{eqt:vortex} converges in distribution to $\eta$ in the space $$L^2 ([0, T], H^{-
		\alpha}) \cap C ([0, T], H^{- \alpha - 2}),$$  for every $\alpha>1$, where $\eta$ is  the unique probabilistically  strong solution to the fluctuation SPDE \eqref{LimitSpde} in the sense of Definitions  \ref{def:stronglimit} and \ref{def:uni} below.
\end{theorem}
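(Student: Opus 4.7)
The plan is to follow a martingale/tightness/uniqueness scheme, as in \cite{wang2023gaussian}, but with a localization argument to absorb the quadratic term $\frac{1}{\sqrt{N}}\eta^N K\ast\eta^N$ and with extra care taken in handling the environmental noise $W$. First I would apply It\^o's formula to $\<\varphi,\mu_N(t)\>$ for test functions $\varphi\in C^\infty(\mathbb{T}^2)$ and subtract the analogous identity for $\<\varphi,v_t\>$ obtained from equation \eqref{eqt:mean}, so as to rewrite $\eta^N$ as the solution of an SPDE of the form
\begin{equation*}
d\eta^N_t=\Delta\eta^N_t\,\mathd t-\nabla\!\cdot\!\bigl(v_t K\ast\eta^N_t+\eta^N_tK\ast v_t+\tfrac{1}{\sqrt N}\eta^N_tK\ast\eta^N_t\bigr)\mathd t+\tfrac12\sigma\!\cdot\!\nabla(\sigma\!\cdot\!\nabla\eta^N_t)\mathd t+\mathd\mathcal{M}^N_t-\sigma\!\cdot\!\nabla\eta^N_t\,\mathd W_t,
\end{equation*}
where $\mathcal{M}^N$ is the (explicit) martingale coming from the individual Brownian motions $B_i$. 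This gives the starting point for both the uniform estimates and the identification of the limit.

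Next I would establish tightness of $\{\eta^N\}_{N\geq 1}$ in $L^2([0,T],H^{-\alpha})\cap C([0,T],H^{-\alpha-2})$ for every $\alpha>1$. For this I would work on the random time horizon defined by the stopping time
\begin{equation*}
\tau_N^R:=\inf\bigl\{t\in[0,T]:\|\eta^N_t\|_{H^{-\alpha}}\ge R\bigr\}\wedge T,
\end{equation*}
and derive uniform-in-$N$ moment bounds for $\mathbb{E}\sup_{t\le\tau_N^R}\|\eta^N_t\|_{H^{-\alpha-2}}^2$ and $\mathbb{E}\int_0^{\tau_N^R}\|\eta^N_t\|_{H^{-\alpha}}^2\,\mathd t$. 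The singular quadratic interaction is the main obstacle, and I would attack it exactly as in \cite{shao2024quantitative,wang2023gaussian}: use the conditional relative entropy bounds for $F^N$ against $\bar F^N=v^{\otimes N}$ together with the conditional propagation of chaos described after \eqref{def:barF} to dominate $\mathbb{E}\|\eta^N K\ast\eta^N\|_{H^{-s}}$ by $\log N$-type quantities, which, when divided by $\sqrt N$ on $\{t\le\tau_N^R\}$, vanish in the limit. Tightness in the path space then follows from Aubin–Lions after fractional time-regularity estimates, the stochastic integral terms being handled by BDG; a separate argument shows $\tau_N^R\to T$ in probability as first $N\to\infty$ and then $R\to\infty$, so that the localization can be removed.

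Once tightness is in place, I would pass to the limit in the martingale formulation, which requires a Skorokhod-type representation so that the environmental noise $W$ and the individual noises can be coupled. Here the hard part will be identifying the additive noise $\mathcal{M}$ of the limit equation \eqref{LimitSpde}: because $\mathcal{M}^N$ depends on the particle positions through $v_t$ (after passing to the limit using propagation of chaos), its quadratic variation conditional on $\mathcal{F}_T^W$ is $\int_0^t\<|\nabla\varphi|^2,\mu_N(s)\>\,\mathd s$, which converges to $\int_0^t\<|\nabla\varphi|^2,v_s\>\,\mathd s$. I would therefore characterize any limit point $\mathcal{M}$ by computing its conditional characteristic function as in \eqref{conditional law}, verifying the conditional increment property by a standard time-discretization argument and the martingale property with respect to the joint filtration generated by $W$ and $\mathcal{M}$. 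The transport noise term $\sigma\!\cdot\!\nabla\eta^N_t\,\mathd W_t$ passes to the limit by continuity of the stochastic integral in the Skorokhod coupling, since $\sigma$ is smooth.

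Finally I would invoke the well-posedness of \eqref{LimitSpde} in the sense of Definitions \ref{def:stronglimit} and \ref{def:uni}: since the equation is linear in $\eta$ once $v$ and $(\mathcal{M},W)$ are fixed, uniqueness reduces to an $L^2$-energy estimate in $H^{-\alpha}$ using the commutator bounds for $\nabla\!\cdot\!(v_tK\ast\cdot)$ and $\nabla\!\cdot\!(\cdot\,K\ast v_t)$ available from the regularity $v\in L^\infty([0,T],H^3)$. Probabilistic strong uniqueness combined with tightness of $\{\eta^N\}$ and identification of each subsequential limit as a solution of \eqref{LimitSpde} with the prescribed noises then yields convergence in distribution of the whole sequence $\eta^N$ by the Gy\"ongy–Krylov lemma applied conditionally on $\mathcal{F}_T^W$, which completes the proof.
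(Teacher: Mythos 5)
Your overall outline — It\^o identity for $\eta^N$, tightness via relative entropy and localization, Skorokhod representation, identification of the limit, uniqueness plus a transfer principle — matches the high-level strategy of the paper. However, there are two substantive gaps.

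First, your localization is at the wrong object and is partly circular. The uniform conditional estimates coming from the relative entropy (Lemma~\ref{lemma uni mu}) carry an $\omega$-dependent prefactor because $H(F^N_t|\bar F^N_t)\leq \exp(C_0\int_0^t(\|v_s\|_{H^4}^2+1)\,\mathd s)$ is random through $v$. You only introduce a stopping time in terms of $\|\eta^N_t\|_{H^{-\alpha}}$, which neither removes that random prefactor nor, on its own, lets you show $\tau^R_N\to T$ — to prove that $\tau^R_N\to T$ you would already need a uniform-in-$N$ bound on $\mathbb{E}\sup_t\|\eta^N_t\|^2_{H^{-\alpha}}$, which is exactly what the relative-entropy localization must supply. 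The paper resolves this by introducing the $v$-dependent stopping time $\tau_R=\inf\{t:m\exp(\int_0^t m\|v_s\|_{H^4}^2\,\mathd s)>R\}$ (which depends only on the external randomness $W$ and has $\tau_R\uparrow T$ a.s. by regularity of $v$), and only \emph{then} a second, $\eta^N$-based stopping time for the H\"older estimate. Without the $v$-localization, the uniform moment bound in Lemma~\ref{lemma: eta unit} cannot be established, and your tightness argument does not close. (Your reference to ``$\log N$-type quantities'' also seems to be imported from a different setting; here the relevant bound is $O(1/N)$ via the conditional relative entropy, Corollary~\ref{lemma uni mu2}.)

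Second, your identification of the additive noise $\mathcal{M}$ differs from the paper's and is underspecified. You propose to read off the conditional law from the conditional quadratic variation $\int_0^t\langle|\nabla\varphi|^2,\mu_N\rangle\,\mathd s\to\int_0^t\langle|\nabla\varphi|^2,v_s\rangle\,\mathd s$ and a discretization/martingale-CLT argument. Conditioning on the terminal $\sigma$-algebra $\mathcal{F}^W_T$ does not straightforwardly preserve the martingale structure of $\mathcal{M}^N$ with respect to the original filtration, and the particles $X^N_i$ are not conditionally i.i.d.\ given $\mathcal{F}^W_T$, so a conditional CLT is not immediate. The paper instead proves a genuinely new ingredient (Proposition~\ref{prop:strong-convergence}): the strong $L^2$ convergence $X^N_i(t)\to\bar X_i(t)$ to the conditional McKean–Vlasov dynamics, which allows $\mathcal{M}^N$ to be replaced by $\bar{\mathcal{M}}^N$ built from conditionally i.i.d.\ particles, whence the classical CLT (conditionally on $\mathcal{F}^W_T$) delivers~\eqref{conditional law}. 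If you want to keep your martingale-CLT route, you would need to formulate and prove a conditional martingale CLT and verify the required conditional increment and independence properties; this is nontrivial and is not the route the paper takes.

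Finally, using Gy\"ongy–Krylov in place of the paper's Yamada–Watanabe/subsequence argument is a legitimate alternative and not a gap, and your uniqueness step via an $H^{-\alpha}$ energy estimate matches the paper's Theorem~\ref{thm:uni}.
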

The definition of the solutions to the fluctuation SPDE  \eqref{LimitSpde}  can be found in Section \ref{sec:preliminaries}, Definitions \ref{def:stronglimit} and \ref{def:uni}. During this process, we also obtain the well-posedness of  the fluctuation SPDE  \eqref{LimitSpde}.
\begin{corollary}\label{thm:main1}
	Given a 1-dimensional Brownian motion $\{W_t, t\in[0,T]\}$ and  a continuous stochastic process  $\{\mathcal{M}_t,t\in [0,T]\}$ taking values in $H^{-\alpha-1}(\mathbb{T}^2)$ for every $\alpha>1,$ on probability space $(\Omega,\mathcal{F},\mathbb{P}),$ satisfying  \eqref{conditional law},
	for each $\eta_0\in H^{-\alpha},\forall\alpha>1,\mathbb{P}$-a.s., satisfying   for every $\varphi\in C^{\infty}(\mT^2),$
	\begin{align*}
		\mathcal{L}(	\left\langle \eta_0,\varphi\right\rangle )= \mathcal{N}\(0,\left\langle \varphi^2,v_0\right\rangle-\left\langle \varphi,v_0\right\rangle^2  \),
	\end{align*} 
	there exists a unique probabilistically strong solution to \eqref{LimitSpde}.
	Here $	\mathcal{L}(	\left\langle \eta_0,\varphi\right\rangle )$ denotes the distribution of $\left\langle \eta_0,\varphi\right\rangle ,$ and $\mathcal{N}(0,a)$ denotes the centered Gaussian distribution on $\mathbb{R}$ with variance $a.$ 
\end{corollary}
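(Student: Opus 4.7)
The plan is to view \eqref{LimitSpde} as a \emph{linear} SPDE in $\eta$ with prescribed random data $(\eta_0, v, W, \mathcal{M})$, so that the proof splits into uniqueness via an Itô energy estimate and existence via a Galerkin approximation, both carried out in negative Sobolev spaces. The coefficient $v$ is already known to be $\mathcal{F}^W_t$-adapted with $v\in C([0,T];H^3)$ and $\inf v>0$ from the hypotheses on $v_0$; the noise $\mathcal{M}$ is known through its conditional characteristic functional \eqref{conditional law}, from which its predictable quadratic variation can be read off explicitly. No nonlinear closure argument is needed, which is what makes the linear-in-$\eta$ structure decisive.

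\textbf{Uniqueness.} Let $\eta^1,\eta^2$ be two probabilistically strong solutions with common data, and set $\xi=\eta^1-\eta^2$, which satisfies the homogeneous linear SPDE with $\xi_0=0$. I would apply Itô's formula to $\|\xi_t\|_{H^{-\alpha-2}}^2$, so that the martingale part vanishes in expectation and the dissipative contribution from $\Delta\xi$ enters with a favorable sign. The Itô correction generated by the transport martingale $-(\sigma\cdot\nabla\xi)\mathd W$ compensates, up to commutators with the Bessel potential, the second-order drift $\tfrac12\sigma\cdot\nabla(\sigma\cdot\nabla\xi)$, because $\sigma$ is smooth and divergence-free; the residual commutators are absorbed by classical Sobolev-multiplier estimates. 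The bilinear coupling $\nabla\cdot(v\,K*\xi+\xi\,K*v)$ is controlled in $H^{-\alpha-2}$ via the $H^3$-regularity of $v$ and the continuity of the singular Biot-Savart convolution in these negative spaces. Gronwall's inequality then forces $\xi\equiv 0$.

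\textbf{Existence and main obstacle.} For existence, I would project onto the span $\Pi_n$ of the first $n$ eigenfunctions of $-\Delta$ on $\mathbb{T}^2$, solving the resulting finite-dimensional linear SDE driven by $W$ and $\Pi_n\mathcal{M}$, which admits a unique strong solution by standard theory. The same energy identity used for uniqueness, combined with BDG applied to $\mathcal{M}$ through its prescribed quadratic variation $\int_0^{\cdot}\langle|\nabla\cdot|^2,v_s\rangle\mathd s$, delivers uniform bounds on $\eta^n$ in $L^p\bigl(\Omega;L^2([0,T];H^{-\alpha})\cap C([0,T];H^{-\alpha-2})\bigr)$. A Simon-type time-regularity estimate plus tightness in the Skorokhod sense yields a limit point, which one identifies as a solution by passing to the limit in the weak formulation. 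Since $(W,\mathcal{M})$ are held fixed throughout, the solution is automatically probabilistically strong, with no appeal to Yamada-Watanabe. The principal obstacle is that $\mathcal{M}$ is neither Gaussian nor independent of $W$, so every manipulation of its Itô calculus must be carried out solely through the prescribed conditional structure \eqref{conditional law}; this forces careful adaptedness bookkeeping whenever the random quadratic variation, which itself depends on $v$, is estimated against the interaction and transport terms.
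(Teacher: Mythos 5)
Your uniqueness argument coincides in spirit and in technique with the paper's Theorem \ref{thm:uni}: an It\^o energy estimate for $\|\bar\eta_t\|_{H^{-\alpha'}}^2$ computed in Fourier modes, in which the additive noise cancels in the difference, a commutator estimate (the paper cites \cite[Lemma 2.3]{krylov2015filtering}) handles the competition between the Stratonovich correction $\tfrac12\sigma\cdot\nabla(\sigma\cdot\nabla\bar\eta)$ and the It\^o bracket of the transport martingale, the Biot--Savart estimate $\|K*\bar\eta\,v\|_{H^{-\alpha'}}\lesssim\|\bar\eta\|_{H^{-\alpha'-1}}\|v\|_{H^4}$ controls the bilinear coupling, and a stochastic Gronwall inequality closes. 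Your choice of exponent and the claim $v\in C([0,T];H^3)$ differ slightly from the paper (which uses $C([0,T];H^2)\cap L^2([0,T];H^4)$), but these are cosmetic.

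Your existence argument, however, is a genuinely different route. The paper does not construct a solution to \eqref{LimitSpde} by Galerkin at all: it obtains weak existence as a by-product of the CLT machinery --- tightness of the particle fluctuations $\eta^N$ (Lemma \ref{tightness}), Skorokhod representation (Proposition \ref{pro:skorokhod}), identification of the additive noise through the strong particle convergence $X_i^N\to\bar X_i$ (Proposition \ref{prop:strong-convergence}) and a conditional CLT (Lemma \ref{lem:unilaw}), and identification of the transport noise via a martingale characterisation (Proposition \ref{prop:transport}) --- and then invokes the Yamada--Watanabe theorem of \cite{kurtz2014yamada} to convert weak existence plus pathwise uniqueness into strong well-posedness. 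A direct Galerkin construction on the prescribed stochastic basis carrying $(W,\mathcal{M},\eta_0)$ would indeed yield Corollary \ref{thm:main1} as a self-contained PDE statement detached from the particle system, which is an attractive feature; but note that the paper gets existence essentially ``for free'' once tightness and limit identification are carried out for the proof of Theorem \ref{thm:main}, so the two routes serve different expository purposes.

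There is one genuine gap in your existence sketch that needs repair. You invoke ``tightness in the Skorokhod sense'' for the Galerkin sequence $\eta^n$ and then claim that ``since $(W,\mathcal{M})$ are held fixed throughout, the solution is automatically probabilistically strong.'' These two statements are incompatible: Skorokhod representation passes to a new probability space on which $(W,\mathcal{M})$ are replaced by equidistributed copies, so the noise is emphatically not held fixed, and you would again be facing a weak solution requiring Yamada--Watanabe (or Gy\"ongy--Krylov) to upgrade. Because the equation is linear and you already have the uniqueness-type energy estimate, the correct way to realise your stated plan is to show directly that $(\eta^n)_n$ is Cauchy in $L^2(\Omega;C([0,T];H^{-\alpha-2}))$: set $\xi^{n,m}=\eta^n-\eta^m$, observe that $\xi^{n,m}$ solves the homogeneous linear equation up to the projection errors $(\Pi_n-\Pi_m)$ acting on the data and the coefficients, and re-run the Gronwall estimate so that $\|\xi^{n,m}\|$ is controlled by those projection errors, which vanish as $n,m\to\infty$. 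This keeps the construction on the original basis, gives adaptedness for free, and makes the solution strong with no measure-theoretic detour. You should also justify the step that feeds into BDG: \eqref{conditional law} only gives the conditional characteristic functional of $\mathcal{M}$ given $\mathcal{F}^W_T$, so you must first deduce --- by the tower property applied through $\mathcal{F}^{W,\mathcal{M}}_t\subset\mathcal{F}^W_T\vee\mathcal{F}^{\mathcal{M}}_t$ --- that $\langle\mathcal{M}_\cdot,\varphi\rangle$ is a continuous $(\mathcal{F}^{W,\mathcal{M}}_t)_t$-local martingale with predictable bracket $2\int_0^\cdot\langle|\nabla\varphi|^2,v_s\rangle\mathd s$ (note the factor $2$, which your sketch drops).
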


Similar to the mean-field limit result \eqref{defmeanfieldlimit}, the central limit theorem for the interacting particle system \eqref{eqt:vortex} reflects the asymptotic independence of particles. Specifically, in the absence of environmental noise $W,$  the distribution of the fluctuation measures $\{\eta^N\}_{N\in \mathbb{N}}$ becomes asymptotically Gaussian as $N\rightarrow\infty,$ like i.i.d. random variables. In the presence of environmental noise $W,$ our result shows that the asymptotic independence is reflected in an additive noise $\mathcal{M}_t,$ which from \eqref{conditional law} is “Gaussian conditioned on $\mathcal{F}^W_T.$"  Additionally, the effect of environmental noise $W$ is captured in the conditional distribution of $\mathcal{M}_t$ and the transport noise $\sigma\cdot \nabla \eta_t\mathd W_t,$ analogous to the mean-field limit equation \eqref{eqt:mean}.
  \subsection{Related literatures}   

For the fluctuations of interacting diffusions, which is the focus of this article, one of the earliest results is due to  It\^o \cite{ito1983distribution}, where he showed that  for  the system of 1-dimensional independent and identically distributed Brownian motions,  the limit of the corresponding fluctuations is a Gaussian process.
One common method to study the central limit theorem for interacting particle systems \eqref{eqt:vortex} is the martingale method,  which is also employed in this paper.  A significant contribution in this area was made by Fernandez and M\'el\'eard \cite{fernandez1997hilbertian}, who  studied the fluctuation behaviour of particle systems with well-regularized kernels and multiplicative independent noise on the whole space.   It was shown that  the fluctuation process $\eta^N$, as a weighted Sobolev space-valued random variable,  converges to a Gaussian-distributed limit in the sense of distribution, as $N\rightarrow \infty.$ The requirement for kernels with strong regularity arises for two main reasons. First, the uniform estimates needed to prove tightness rely on a coupling method, which requires at least Lipschitz regularity for the kernels. Second, when identifying the tight limit, the second-order differential operator in the fluctuation equation \eqref{LimitSpde} must be linear continuous in the weighted Sobolev space, necessitating  stricter regularity conditions. It is worth emphasizing that the  approach introduced in \cite{ fernandez1997hilbertian}  has been amplified to study  various interacting models, see \cite{jourdain1998propagation,chevallier2017fluctuations,chen2016fluctuation,lucon2016transition} etc.  Recent work by Wang, Zhao and Zhu \cite{wang2023gaussian} extends the results to the limiting behavior of the fluctuation process $\eta^N$ for interacting particle system \eqref{eqt:vortex} on torus with singular kernel  satisfying $\|K\|_{L^\infty}<\infty,$ or $K(x)=-K(-x)$ and $\|xK(x)\|_{L^\infty}< \infty$ (e.g.,   Biot-Savart kernel).  Using the relative entropy method and a structured observation of interaction term, they addressed the challenging issues of uniform estimates and the convergence of interaction terms when applying the martingale method to particle systems with singular kernels. Building on \cite{wang2023gaussian}, our work applies their framework to resolve singularity issues in the Biot–Savart law.  Another study closely related to our work is  \cite{kurtz2004stochastic}, which employed a martingale method combined with  coupling method to investigate the convergence of the fluctuation process for interacting particle systems with common noise and time-varying random intensities $(\xi_j)_{j\geq 1}$ in the modified Schwartz space.  In their model, the random intensities $\xi_j$ are governed by a stochastic differential equation driven by independent noise $B_j$ and the environmental noise $W.$ The limiting distribution of the fluctuation process is non-Gaussian, consistent with our findings.  Similar to \cite{fernandez1997hilbertian}, their method also required strong regularity assumptions on the interaction kernel $K.$
We also refer to the works \cite{oelschlager1987fluctuation,jourdain1998propagation,chen2024fluctuations} which study the fluctuations in the  moderate mean field regime. 

Additionally, another type result is known as the pathwise central limit theorem. This kind of result studies the limiting behavior of fluctuation processes based on particle trajectories, considering the entire paths of particles. For example, we consider the fluctuation processes of the form $\sqrt{N}(\frac{1}{N}\sum_{i=1}^{N}\delta_{X_i}-\mathcal{L}(X)),$   where $X\in C([0,T],\mathbb{R}^d)$ solves some nonlinear stochastic differential equation.
In this context, Tanaka and Hitsuda \cite{tanaka1981central} first studied a specific case with $K(x)=-\lambda x,\lambda>0.$ Later, Tanaka \cite{tanaka1984limit} extended the analysis to more general kernels $K\in C^2_b,$ using a pathwise construction approach. Sznitmann \cite{sznitman1984nonlinear} removed  the differentiability condition  on test functions and generalized the result to bounded and Lipschitz continuous kernel, using   Girsanov’s formula and the method of $U$-statistics.  Recently, Budhiraja and Wu \cite{budhiraja2016some} studied some general interacting systems with possible common factors, which do not necessarily  have the exchangability property as usual. Their result  follows the strategy by Sznitmann \cite{sznitman1984nonlinear}. More recently,  Chaintron \cite{chaintron2024quasi} further generalized the results to include the case of multiplicative independent noise.

The qualitative central limit theorem for second-order systems is explored in \cite{braun1977vlasov} and \cite{lancellotti2009fluctuations}.  Among these, \cite{braun1977vlasov} as the first to investigate the fluctuation behavior of second-order systems. More recently, significant advancements have been made in \cite{duerinckx2021size} and \cite{bernou2024uniform}. Duerinckx \cite{duerinckx2021size} achieved optimal quantitative fluctuation estimates, while Bernou and Duerinckx \cite{bernou2024uniform} established an uniform-in-time quantitative central limit theorem.
	\subsection{Outline of proof and difficulties}
	The proof in this paper consists of two main steps: Step 1 involves establishing tightness, and Step 2 focuses on identifying the tight limit and proving the pathwise uniqueness of the fluctuation SPDE \eqref{LimitSpde}.

In the first step, we start by obtaining the necessary uniform estimates for the fluctuation process $(\eta^N)_{N\in \mathbb{N}}$ and  the interacting terms $\nabla\cdot [K \ast \mu_N (t) \mu_N (t) - v_t K
	\ast v_t] , $ $\mathcal{K}^N(\varphi)$  defined in \eqref{interact term} below,  using the relative entropy method. The main challenge in this step is to establish the tightness of the laws of $\{\eta^N\}$ based on these estimates.  On one hand, the uniform bound for the relative entropy $\sup_{t \in [0, T]}H(F_t^N|\bar{F}^N_t)(\omega)$  depends on $\omega \in \Omega,$ due to the singularity of the Biot–Savart kernel and the presence of environmental noise $\{W_t, t\in[0,T]\}.$ This prevents us from obtaining the tightness of the laws of $\{\eta^N\}$  directly. However, when $K$ is bounded or there is no common noise, the uniform relative entropy  $\sup_{t \in [0, T]}H(F_t^N|\bar{F}^N_t)$ can be bounded by a deterministic constant. On the other hand, compared to the case without common noise in  \cite{wang2023gaussian}, we must  handle the uniform estimate for the H\"older seminorm of the new transport noise term $\sigma\cdot \nabla \eta^N_t \mathd W_t.$ To address these challenges,    based on \cite{wang2023gaussian},
	 a classical localization method is applied. Specifically, we study tightness through introducing two  sequence of stopping times  to control the mean field limit $v$ and the fluctuation measures $\eta^N.$ 

In the second step, we identify the tight limit $\tilde{\eta}$  in  Proposition \ref{pro:skorokhod} and  establish the existence of solutions to the fluctaution SPDE \eqref{LimitSpde}. The main difficulty in this step is identifying  the additive noise $\{\mathcal{M}_t,t\in [0,T]\}$ in the fluctution SPDE  \eqref{LimitSpde}  through studying the conditional law of $\{\mathcal{M}_t,t\in [0,T]\}$ with respect to the environmental noise $\{W_t, t\in[0,T]\}.$ To overcome this challenge, we establish a strong convergence ( see Proposition \ref{prop:strong-convergence} below) from the stochastic point vortex system \eqref{eqt:vortex} to the conditional Mckean-Vlasov equation \eqref{eqt:ncopy},   and then  derive  the conditional law of the additive noise $\{\mathcal{M}_t,t\in [0,T]\}$,  following the idea in \cite{kurtz2004stochastic}. Pathwise uniqueness for the fluctuation SPDE \eqref{LimitSpde} is then proven using standard SPDE arguments.

\subsubsection*{\bf{Organization of the paper}}

 The paper is structured as follows. Section \ref{sec:preliminaries} provides an introduction to key definitions related to the mean field limit equation \eqref{eqt:mean}, the fluctuation SPDE \eqref{LimitSpde}, and some auxiliary results that will be used in the subsequent proofs. The main sections of the paper are  Section  \ref{sec:tightness} and Section \ref{sec:wellposed}. In Section \ref{sec:tightness}, we study the tightness of laws of $\{\eta^N\}_{N\in \mathbb{N}}$ in the space $\mathcal{X}$ (defined in Definition \eqref{def:H}) and  $\{\mathcal{M}^N\}_{N\in\mathbb{N}}$  in the space $\mathcal{Y}$ (defined in Definition \eqref{def:H}).  In  Section \ref{sec:wellposed}, we establish the well-posedness of the fluctuation SPDE \eqref{LimitSpde} and   convergence of the fluctuation process $\{\eta^N\}_{N\in \mathbb{N}}$ to  the unique probabilistically  strong solution $\eta$  to the fluctuation SPDE \eqref{LimitSpde} in the sense of distribution.

	At the end of this section,  we introduce the basic notation used throughout the paper.

	\begin{enumerate}
		\item Bracket notations: The bracket $\<\cdot, \cdot\>$  denotes integrals when the space and underlying measure are clear from the context. We use a similar bracket $[\cdot ,\cdot]_t$ to denote quadratic variations between local martingales at time $t$. 
		\item  Filtration notations: The notation $\mathcal{F}_1\vee \mathcal{F}_2$ stands for the $\sigma$-algebra generated by $\mathcal{F}_1\cup \mathcal{F}_2$. Given a stochastic process $X_t, t\in [0,T],$ we use $(\mathcal{F}_t^X)_{t\in [0,T]}$ to denote the normal filtration 	generated by $X.$ 
		In particular, we  use $(\mathcal{F}_t^W)_{t\in [0,T]}$ to denote the normal filtration generated by 1-dimensional Brownian motions $\{W_t, t\in [0,T]\}.$ We also use $(\mathcal{F}_t)_{t\in [0,T]}$ to denote the normal filtration generated by
		1-dimensional Brownian motion $W$ and 2-dimensional Brownian motions $(B_i)_{i\geq 1}$ and $( X_i(0))_{i\geq 1}.$
		\item  Distribution notations: Given a  Polish space $E$ and probability space $(\Omega,\mathcal{F},\mathbb{P}),$ we say $Q(\mathd x,\omega)$ is a random meausre on $E,$ if $Q(\mathd x,\omega)$ is a function of two variables $\omega\in \Omega$ and $A\in\mathbb{B}(E),$ satisfies that there exists a null set $N\in \mathcal{F}$ such that $Q(\mathd x,\omega)$ is a measure in $A$ for fixed $\omega\in N^c.$   Given $\sigma$-algebra $\mathcal{F}$, we use $\mathcal{L}(X),$ $\mathcal{L}(X|\mathcal{F})$ to denote the distribution of $X$ and conditional distribution of $X$ with respect to  $\mathcal{F}.$  In particular, for convention, we may denote the distribution by its density function when the distribution has a density function. Given a symmetric probability measure $\rho^N$ on $E^N$ where $E$ is a Polish space, the $k$-marginal $\rho^{N,k}$ is a probability measure on $E^k$ defined by $\int_{E^{N-k}}\rho^N(\mathd x_1\cdots \mathd x_{N}),$ where $k\leq N.$ Finally, we use $\mathcal{P}(E)$ to denote the space of probability measure on $E.$ 
		\item  Independence and Conditional independence: Given three $\sigma$-algebras $\mathcal{F}_1$, $\mathcal{F}_2$, and $\mathcal{F}_3,$  we use $\mathcal{F}_1\perp\mathcal{F}_2|\mathcal{F}_3$ to  indicate that $\mathcal{F}_1$ and $\mathcal{F}_2$ are conditionally independent given $\mathcal{F}_3.$ $\mathcal{F}_1\perp\mathcal{F}_2$ indicates that $\mathcal{F}_1$ and $\mathcal{F}_2$ are mutually independent. 
		\item  Product space and Product function:  Given two measure spaces $(\Omega_1,\mathcal{A}_1,\mu_1)$ and $(\Omega_2,\mathcal{A}_2,\mu_2),$  we denote their product space as $\Omega_1 \times \Omega_2,$ the product $\sigma$-algebra  as $\mathcal{A}_1 \times \mathcal{A}_2,$  and the product measure  as $\mu_1 \times \mu_2.$ For $k\in \mathbb{N},$ we use $(\Omega_1^{\otimes k},\mathcal{A}_1^{\otimes k},\mu_1^{\otimes k})$ to denote the $k$-product measure space for  the measure space $(\Omega_1,\mathcal{A}_1,\mu_1).$ Given a function $f(x),x\in E$ on Polish space $E$ and $k\in \mathbb{N},$ the $k$-tensorized function $f^{\otimes k}(x^k)$ is defined by $\prod_{i=1}^{k}f(x_i),$ where $x^k=(x_1,\cdots,x_k)\in E^k.$
		\item  We will mostly  work on  Sobolev spaces.  The norm of Sobolev space $H^{\alpha}(\mathbb{T}^d)$, $\alpha\in \mathbb{R}$, is defined by
		\begin{equation*}
			\|f\|_{H^{\alpha}}^2:=\sum_{k\in\mathbb{Z}^d}(1+|k|^2)^{\alpha}|\langle f,e_k \rangle|^2,
		\end{equation*}
	 where $e_k\assign e^{\sqrt{-1}k\cdot x}, k\in \mathbb{Z}^d.$
		We also use some results on Besov spaces $B^{\alpha}_{p,q}$ in this paper and  provide a brief introduction about Besov spaces in Section \ref{besov space}.  
	\end{enumerate}

Finally, throughout this paper, we use $C$ to denote universal constants, and we indicate relevant dependencies using subscripts when necessary. We use the notation $a\lesssim b$ if there exists a universal constant
$C > 0$ such that $a\leq Cb.$

	\section{Preliminaries}\label{sec:preliminaries}
	In this section, we introduce the definitions of solutions and collect some auxiliary results.
\subsection{Definitions of solutions}\label{ssec:def}
In this subsection,  we   present several distinct definitions for solutions to the stochastic 2-dimensional Navier-Stokes equation  \eqref{eqt:mean} and the fluctuation SPDE \eqref{LimitSpde}. The  following definitions for solutions to the stochastic 2-dimensional Navier-Stokes equation  \eqref{eqt:mean} are consistent with those in \cite{shao2024quantitative} and the well-posedness of \eqref{eqt:mean} has also been established in \cite{shao2024quantitative}.

 \begin{definition}\label{def:mean}
 	A probabilistically weak solution $(\Omega,\mathcal{F},(\mathcal{G}_t)_{t\in [0,T]},\mathbb{P},W,(  v_t)_{t\in [0,T]})$   to \eqref{eqt:mean} with initial value $  v_0\in H^{3}(\mathbb{T}^2)$ is defined as  a stochastic basis $(\Omega,\mathcal{F},(\mathcal{G}_t)_{t\in [0,T]},\mathbb{P})$ supporting   standard $(\mathcal{G}_t)_{t\in [0,T]}$-Brownian motion $\{W_t,t\in [0,T]\}$ (denoted by $W$) and a continuous $L^2(\mathbb{T}^2)$-valued $\mathcal{G}_t$-adapted stochastic process $(  v_t)_{t\in [0,T]}$  such that 
 	\begin{enumerate}
 		\item For all $t\in [0,T]$, \begin{align}\label{regu-1}
 			\underset{x\in \mathbb{T}^2 }{ess\sup} v_t\leq \underset{x\in \mathbb{T}^2 }{ess\sup}v_0,    \quad\underset{x\in \mathbb{T}^2 }{ess\inf}   v_t\geq  \underset{x\in \mathbb{T}^2 }{ess\inf}   v_0,  \quad\|  v_t\|_{L^2}\leq \|  v_0\|_{L^2},\quad \mathbb{P}-a.s..
 		\end{align}  
 		\item  It holds that 
 		\begin{align}\label{regu-2}
 			&\mathbb{E} \int_0^T \|  v_t\|_{H^4}^2\mathd t <\infty , \quad \mathbb{E}\big[\sup_{t\in [0,T ]}\|  v_t\|_{H^2}^2\big]< \infty.
 		\end{align}
 		\item   For all $\varphi\in C^{\infty}(\mathbb{T}^2)$,  it holds almost surely that for all $t\in [0,T],$ 
 		\begin{align*}
 			\left\langle  \varphi,  v_t\right\rangle  =  \left\langle \varphi,   v_0\right\rangle +\int_0^t \left\langle \Delta\varphi,   v_s\right\rangle \mathd s +\int_0^t\left\langle  \nabla \varphi,  K*  v_s   v_s\right\rangle \mathd s+ \int_0^t  \left\langle \nabla \varphi,    v_s\sigma\right\rangle\circ \mathd W_s .
 		\end{align*}  
 	\end{enumerate}
 	
 \end{definition}
 \begin{definition}\label{def:mean-strong}
 	Given   a independent 1-dimensional Brownian motion $\{W_t,t\in [0,T]\}$  on a probability space $(\Omega,\mathcal{F},\mathbb{P}),$  we say that 
 	$(  v_t)_{t\in [0,T]}$  is a probabilistically strong solution  to \eqref{eqt:mean} with initial value $  v_0\in H^{3}(\mathbb{T}^2)$   if $(\Omega,\mathcal{F},(\mathcal{F}^{W}_t)_{t\in [0,T]},\mathbb{P},(  v_t)_{t\in [0,T]})$  is a  probabilistically weak solution to \eqref{eqt:mean} with initial value $  v_0\in H^{3}(\mathbb{T}^2),$ where $(\mathcal{F}^{W}_t)_{t\in [0,T]}$ is the normal filtration generated by Brownian motions $\{W_t,t\in [0,T]\}.$ 
 \end{definition}
 \begin{definition}\label{def:uni-mean}
 	We say that pathwise uniqueness holds for \eqref{eqt:mean} if for any two probabilistically weak solutions $(  v_t)_{t\in [0,T]}$ and $(\tilde{  v}_t)_{t\in [0,T]}$  on the same stochastic basis $(\Omega,\mathcal{F},(\mathcal{G}_t)_{t\in [0,T]},\mathbb{P}),$ with the same noise $\{W_t,t\in [0,T]\}$ and the same initial data $v_0,$ it satisfies $$\mathbb{P}\(\|  v_t-\tilde{  v}_t\|_{L^2}=0, \forall t\in [0,T] \)=1.$$
 \end{definition}

Before introducing the definitions of solutions to \eqref{LimitSpde}, we first define a   Polish space in which the solution exists. This space is given by
\begin{align}\label{def:d}
	\mathcal{D}\assign	\mathcal{X}\times\mathcal{Y}\times\mathcal{W}
\end{align}  equipped with the metric  $d_{\mathcal{D}}(f,g)\assign (\sum_{i=\mathcal{X},\mathcal{Y},\mathcal{W}}d^2_{i}(f,g))^{\frac{1}{2}},$ 
where \begin{align*}
			\mathcal{X} &\assign  \bigcap_{k\in \mathbb{N}}  \left[ C ([0, T];
	H^{-3-\frac{1}{k}}(\mathbb{T}^2)) \cap L^2([0,T];H^{-1-\frac{1}{k}}(\mathbb{T}^2))\right] ,
	\\	\mathcal{Y}&\assign \bigcap_{k\in \mathbb{N}}C ([0, T];H^{-2-\frac{1}{k}}(\mathbb{T}^2)),
	\\\mathcal{W}&\assign C([0,T];\mathbb{R}),
\end{align*}
endowed with the metrics
\begin{align*}
	d_{	\mathcal{X} }(f,g)&\assign\sum_{k=1}^{\infty}2^{-k}\bigg(1\wedge\(\|f-g\|_{C ([0, T];
		H^{-3-\frac{1}{k}})}+\|f-g\|_{L^2([0,T];H^{-1-\frac{1}{k}})}\)\bigg),
	\\d_{	\mathcal{Y} }(f,g)&\assign\sum_{k=1}^{\infty}2^{-k}\big(1\wedge\|f-g\|_{C ([0, T];
		H^{-2-\frac{1}{k}})}\big).
\end{align*}
Similarly, we  define  Polish space  \begin{align}\label{def:H}
	\mathcal{H}\assign\mathcal{V}\times	\mathcal{X}\times\mathcal{Y}\times\mathcal{W}
\end{align}  equipped with the metric  $d_{\mathcal{H}}(f,g)\assign (\sum_{i=\mathcal{V},\mathcal{X},\mathcal{Y},\mathcal{W}}d^2_{i}(f,g))^{\frac{1}{2}},$  where
		$$\mathcal{V}\assign C([0,T];L^2(\mathbb{T}^2))\cap L^2([0,T];H^4(\mathbb{T}^2))$$
 endowed with the metric $	d_{	\mathcal{V} }(f,g)\assign\|f-g\|_{C ([0, T];
	L^2)}+\|f-g\|_{L^2([0,T];H^{4})}.$
We then give the definitions about the solution to the fluctuation SPDE \eqref{LimitSpde}.
\begin{definition}\label{def:limit spde}
	A probabilistically weak solution $\bigg(\Omega,\mathcal{F},(\mathcal{G}_t)_{t\in [0,T]},\mathbb{P},\(  \eta_t,\mathcal{M}_t,W_t\)_{t\in [0,T]}\bigg)$   to the SPDE \eqref{LimitSpde}  is defined as  a stochastic basis $(\Omega,\mathcal{F},(\mathcal{G}_t)_{t\in [0,T]},\mathbb{P})$ supporting the stochastic process $$\(  \eta_t,\mathcal{M}_t,W_t\)_{t\in [0,T]}$$ valued in $\mathcal{D},$ 
	 	\begin{enumerate}
		\item $W$ is $(\mathcal{G}_t)_{t\in [0,T]}$-1-dimensional Brownian motion.
		\item  $(\mathcal{M}_t)_{t\in [0,T]}$ is a $\mathcal{G}_t$-adapted process belonging to  $C([0,T];H^{- \alpha}(\mathbb{T}^2))$ $\mathbb{P}$-a.s., for every $\alpha > 2,$ and satisfying for every $\varphi\in C^{\infty}(\mathbb{T}^2), 0\leq t<t+r\leq T,$ 
		\begin{align*}
			\begin{split}
			\mathbb{E}\[\exp{i\left\langle \varphi,\mathcal{M}_t \right\rangle}\mid \mathcal{F}^{W}_T\]&=\exp\bigg\{-\int_{0}^{t}\langle \mid \nabla\varphi\mid^2,v_s\rangle\mathd s\bigg\},
			\\ \mathbb{E}\[\exp{i\left\langle \varphi,(\mathcal{M}_{t+r}-\mathcal{M}_{t}) \right\rangle}\mid \mathcal{F}^{W}_T\vee\mathcal{F}^{\mathcal{M}}_t\]&=\exp\bigg\{-\int_{t}^{t+r}\langle \mid \nabla\varphi\mid^2,v_s\rangle\mathd s\bigg\},
		\end{split}
		\end{align*}
		\item $(\eta_t)_{t\in [0,T]}$ is a continuous $H^{- \alpha - 2}(\mathbb{T}^2)$-valued $\mathcal{G}_t$-adapted stochastic process satisfying $\eta\in L^2 ([0, T], H^{-
			\alpha}(\mathbb{T}^2))$ $\mathbb{P}$-a.s.,   for every $\alpha > 1.$
		\item For all $\varphi\in C^{\infty}(\mathbb{T}^2)$,  it holds almost surely that for all $t\in [0,T],$ 
			\begin{align}\label{eqt-limitspde}
			\langle \eta_t, \varphi \rangle =& \langle \eta_0, \varphi \rangle +
			\int^t_0 \langle  \Delta \varphi, \eta_s \rangle \mathd \nocomma s +
			\int^t_0 \langle \nabla \varphi, v_s K \ast \eta_s \rangle \mathd
			\nocomma s + \int^t_0 \langle \nabla \varphi, \eta_s K \ast v_s
			\rangle \mathd \nocomma s \nonumber\\
			&+\langle\mathcal{M}_t ,\varphi\rangle+\frac{1}{2}\int_0^t  \left\langle\sigma\cdot\nabla\(\sigma\cdot\nabla\varphi\),\eta_s\right\rangle \mathd s+\int_{0}^{t}\left\langle \sigma\cdot \nabla\varphi,\eta_s\right\rangle \mathd W_s,
		\end{align}
	where $(  v_t)_{t\in [0,T]}$  is the unique probabilistically strong solution to the mean field equation \eqref{eqt:mean} in the sense of Definitions \ref{def:mean-strong} and \ref{def:uni-mean}.
	\end{enumerate}   
\end{definition}
\begin{remark}\label{remark-m}
Condition (2) in Definition \ref{def:limit spde} specifies the conditional distribution of  $\mathcal{M}$ with respect to the environmental noise $W,$  which uniquely determines the joint distribution of $\mathcal{M}$ and $W.$ Notably, this condition also shows that given the environmental noise information $\mathcal{F}_T^W,$ the distribution of $\mathcal{M}_t$
is similar to a Gaussian distribution.
\end{remark}
\begin{definition} \label{def:stronglimit}
		Given  a 1-dimensional Brownian motion $\{W_t,t\in[0,T]\}$  and a stochastic process $\{\mathcal{M}_t,t\in [0,T]\}$ with values in $\bigcap_{k\in \mathbb{N}}C ([0, T];H^{-2-\frac{1}{k}}(\mathbb{T}^2))$ on a probability space $(\Omega,\mathcal{F},\mathbb{P})$ satisfying for every $\varphi\in C^{\infty}(\mathbb{T}^2), 0\leq t<t+r\leq T,$ 
		\begin{align*}
				\begin{split}
				\mathbb{E}\[\exp{i\left\langle \varphi,\mathcal{M}_t \right\rangle}\mid \mathcal{F}^{W}_T\]&=\exp\bigg\{-\int_{0}^{t}\langle \mid \nabla\varphi\mid^2,v_s\rangle\mathd s\bigg\},
				\\ \mathbb{E}\[\exp{i\left\langle \varphi,(\mathcal{M}_{t+r}-\mathcal{M}_{t}) \right\rangle}\mid \mathcal{F}^{W}_T\vee\mathcal{F}^{\mathcal{M}}_t\]&=\exp\bigg\{-\int_{t}^{t+r}\langle \mid \nabla\varphi\mid^2,v_s\rangle\mathd s\bigg\},
			\end{split}
		\end{align*}   we say that 
	$(\eta_t)_{t\in [0,T]}$ is a probabilistically strong solution  to \eqref{LimitSpde}     if $$\bigg(\Omega,\mathcal{F},(\mathcal{F}^{W,\mathcal{M}}_t)_{t\in [0,T]},\mathbb{P},\(  \eta_t,\mathcal{M}_t,W_t\)_{t\in [0,T]}\bigg)$$    is a  probabilistically weak solution to \eqref{LimitSpde} where $(\mathcal{F}^{W,\mathcal{M}}_t)_{t\in [0,T]}$ is the normal filtration generated by $\{W_t,t\in[0,T]\},$    $\{\mathcal{M}_t,t\in [0,T]\},$ and the initial value $\eta_0.$  
\end{definition}
\begin{definition}\label{def:uni}
	We say that pathwise uniqueness holds for \eqref{LimitSpde} if for any two probabilistically weak solutions $(\eta_t)_{t\in [0,T]}$ and $(\tilde{  \eta}_t)_{t\in [0,T]}$  on the same stochastic basis $(\Omega,\mathcal{F},(\mathcal{G}_t)_{t\in [0,T]},\mathbb{P}),$ with the same noise $\{W_t,t\in[0,T]\}, \{\mathcal{M}_t,t\in [0,T]\}$   and the same initial data $\eta_0 \in \cap_{k\in\mathbb{N}} H^{-1-\frac{1}{k}}(\mathbb{T}^2),$ it satisfies that for every $4>\alpha>3,$	$$\mathbb{P}(\sup_{t \in [0, T]}\|\eta_t-\tilde{\eta}_t\|_{H^{-\alpha}}^2=0)=1.$$
\end{definition}
\subsection{Relative entropy and stochastic point vortex model}
In this section, we collect the  auxiliary results from \cite{jabin2018quantitative}, \cite{shao2024quantitative}  and \cite{fournier2014propagation} for convenience.
We start this section by recalling the definition of relative entropy  associated to any two probability measures $\rho$ and $\eta$ on Polish space $E$.  

The {\it relative entropy} $H(\rho|\eta )$  is defined as 
\begin{align*} 
	H(\rho|\eta)\assign 
	\begin{cases}
		\displaystyle{	\int_{E} \log\frac{d\rho}{d\eta}\mathd \rho }   &\rho\ll\eta; \vspace{5pt}\\
		+\infty, &otherwise.
	\end{cases} 
\end{align*}  
Here $\frac{d\rho}{d\eta}$ represents the Radon–Nikodym derivative of $\rho$ with respect to $\eta$.  

 The following lemma is used to derive the uniform estimates for the stochastic point vortex model \eqref{eqt:vortex}, i.e.,  Lemma \ref{lemma uni mu}, in Section \ref{sec:tightness}.
\begin{lemma}[{\cite[Lemma 1]{jabin2018quantitative}}]\label{lemma jw0} 
	For  any  two probability densities $\rho_N,\bar{\rho}_N$ on $\mathbb{T}^{2N},N\geq 1$ and any function $\phi \in L^{\infty} \big(\mathbb{T}^{2N} \big) $, one has that  for any constant $b>0,$
	$$\int_{\mathbb{T}^{2N}}\phi \rho_N \mathd x^{N}\leq \frac{1}{b N} \bigg(H(\rho_N|\bar{\rho}_N)+\log \int_{\mathbb{T}^{2N}}\bar{\rho}_N\exp\{bN\phi\}\mathd x^{N}\bigg).$$
\end{lemma}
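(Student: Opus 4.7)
The plan is to derive the inequality from the classical Gibbs/Donsker-Varadhan variational characterization of relative entropy. In its dual form this states
\[
\log \int_{\mathbb{T}^{2N}} \bar{\rho}_N e^{f} \, \mathd x^N \; = \; \sup_{\rho_N \ll \bar{\rho}_N} \Big\{ \int_{\mathbb{T}^{2N}} f \rho_N \, \mathd x^N - H(\rho_N|\bar{\rho}_N) \Big\},
\]
so specialising to the test function $f = bN\phi$ and rearranging recovers the claimed bound after dividing by $bN$. The only job is thus to prove the ``$\leq$'' direction of this duality for the single test function $f = bN\phi$.

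First I would reduce to the case $\rho_N \ll \bar{\rho}_N$; otherwise $H(\rho_N|\bar{\rho}_N) = +\infty$ and the inequality is trivially true. Since $\phi \in L^\infty(\mathbb{T}^{2N})$, the partition function $Z \assign \int_{\mathbb{T}^{2N}} \bar{\rho}_N \exp\{bN\phi\}\, \mathd x^N$ satisfies the two-sided bound $Z \in [e^{-bN\|\phi\|_{L^\infty}}, e^{bN\|\phi\|_{L^\infty}}]$, so $\log Z$ is a finite real number. Setting $g \assign \rho_N / \bar{\rho}_N$ and $f \assign bN\phi$, I would introduce the tilted probability density $\mu \assign Z^{-1} e^{f} \bar{\rho}_N$ and compute directly
\[
H(\rho_N|\mu) \; = \; \int_{\mathbb{T}^{2N}} \rho_N \log\!\big(g Z e^{-f}\big)\, \mathd x^N \; = \; H(\rho_N|\bar{\rho}_N) - \int_{\mathbb{T}^{2N}} f \rho_N \, \mathd x^N + \log Z.
\]

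The decisive step is then Jensen's inequality applied to the convex function $x \mapsto x\log x$, which guarantees $H(\rho_N|\mu) \geq 0$ for any pair of probability densities (equivalently, one invokes the classical Young-type bound $xy \leq x\log x - x + e^{y}$ pointwise with $x = g$ and $y = f$, then integrates against $\bar\rho_N$). Combining nonnegativity with the identity above yields
\[
\int_{\mathbb{T}^{2N}} f \rho_N \, \mathd x^N \; \leq \; H(\rho_N|\bar{\rho}_N) + \log Z,
\]
and substituting $f = bN\phi$ and dividing by $bN$ completes the proof.

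There is no real obstacle: the lemma is a one-line application of Jensen's inequality once the tilted measure is introduced. The only technical point is to check that each term in the displayed identity is a well-defined finite real number, which is immediate from $\phi \in L^\infty(\mathbb{T}^{2N})$ (giving $|\!\int f \rho_N \, \mathd x^N| \leq bN\|\phi\|_{L^\infty}$ and finite $\log Z$), so that the manipulation $H(\rho_N|\mu) \geq 0$ is legitimate without any further integrability concern.
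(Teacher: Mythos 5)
Your proof is correct, and it is the standard proof of this Gibbs/Donsker--Varadhan variational inequality; the paper itself does not reprove the lemma but simply cites \cite[Lemma 1]{jabin2018quantitative}, where the same tilting-plus-nonnegativity-of-relative-entropy argument is used. One very small caveat about your parenthetical: the raw pointwise Young bound $xy \le x\log x - x + e^y$ integrated against $\bar\rho_N$ gives only $\int f\rho_N \le H(\rho_N|\bar\rho_N) + Z - 1$, which is weaker than $\log Z$ when $Z \neq 1$; to recover the sharp constant by that route you must first normalise by replacing $f$ with $f - \log Z$, which is precisely what passing to the tilted density $\mu = Z^{-1}e^f\bar\rho_N$ accomplishes. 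Your main argument via $H(\rho_N|\mu)\ge 0$ is exactly right and needs no adjustment.
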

The following two results from \cite{shao2024quantitative} establish the well-posedness of the stochastic 2-dimensional Navier-Stokes equation \eqref{eqt:mean} and provide a quantitative conditional propagation of chaos result for the stochastic point vortex model \eqref{eqt:vortex}.
\begin{lemma}[{\cite[Theorem 3.1]{shao2024quantitative}}]\label{thm:spde}
	Given  1-dimensional standard Brownian motion  $\{W_{t},t\in [0,T]\}$ on  probability space $(\Omega,\mathcal{F},\mathbb{P}),$
	for each $  v_0\in H^{3}(\mathbb{T}^2)$,	there exists a unique probabilistically strong solution $(  v_t)_{t\in [0,T]}$ to \eqref{eqt:mean} in the sense of Definitions \ref{def:mean-strong} and \ref{def:uni-mean}. 
\end{lemma}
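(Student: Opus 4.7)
The plan is to reduce \eqref{eqt:mean} to an Itô SPDE with divergence-free transport noise and a viscous second-order correction, and then run a standard Galerkin/mollification--compactness--uniqueness pipeline, exploiting the divergence-free properties of both $K\ast v$ and $\sigma$ to close energy estimates.

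First I would convert the Stratonovich equation to its Itô form. Since $\sigma$ is a smooth divergence-free vector field and $W$ is one-dimensional, the Stratonovich correction gives the Itô SPDE
\begin{equation*}
\mathd v = \Delta v\,\mathd t - (K\ast v)\cdot\nabla v\,\mathd t + \tfrac{1}{2}\sigma\cdot\nabla(\sigma\cdot\nabla v)\,\mathd t - \sigma\cdot\nabla v\,\mathd W_t.
\end{equation*}
Note that $K\ast v$ is divergence-free because $\mathrm{div}\,K=0$, so $(K\ast v)\cdot\nabla v = \mathrm{div}((K\ast v)\,v)$, and $\sigma$ is divergence-free by assumption. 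Both facts make the transport and transport-noise terms conservative in $L^2$, with the Itô Laplacian correction from the noise cancelling exactly against $\tfrac{1}{2}\sigma\cdot\nabla(\sigma\cdot\nabla v)$ in the $L^2$ test.

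Next I would construct approximate solutions $v^\varepsilon$ by replacing $K$ with a smooth mollification $K^\varepsilon$ (and/or projecting onto finite Galerkin modes). For each $\varepsilon>0$ the regularised equation is a classical parabolic SPDE with Lipschitz drift, so a probabilistically strong solution exists on the original stochastic basis. I would then derive the a priori estimates listed in \eqref{regu-1}--\eqref{regu-2}: (i) an $L^2$ energy estimate using the divergence-free transport and Itô's formula, giving $\|v^\varepsilon_t\|_{L^2}\le\|v_0\|_{L^2}$ almost surely; (ii) $L^p$ and $L^\infty$ bounds via a maximum-principle argument for the Itô form (the transport noise preserves $L^\infty$ thanks to divergence-freeness), which also yields preservation of $\mathrm{ess\,inf}\,v_0$ and $\mathrm{ess\,sup}\,v_0$; (iii) higher Sobolev estimates, propagating $H^3$ regularity of the initial datum, by differentiating the equation and using commutator estimates for $\nabla^k((K\ast v)\cdot\nabla v)$ together with the Gagliardo--Nirenberg inequality in two dimensions. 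With these bounds, standard stochastic compactness (Aldous + Prokhorov + Skorokhod) yields a martingale solution after passing $\varepsilon\to 0$; uniform $H^4$-integrability in time gives the integrability statement in \eqref{regu-2}.

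For pathwise uniqueness, let $v,\tilde v$ be two probabilistically weak solutions on the same stochastic basis driven by the same $W$ with identical initial data, and set $w=v-\tilde v$. Applying Itô's formula to $\|w\|_{L^2}^2$, the transport-noise Itô correction cancels the deterministic Stratonovich correction exactly (by divergence-freeness of $\sigma$), the Laplacian contributes $-2\|\nabla w\|_{L^2}^2$, and the nonlinear difference splits as
\begin{equation*}
\langle w,(K\ast w)\cdot\nabla v\rangle + \langle w,(K\ast\tilde v)\cdot\nabla w\rangle.
\end{equation*}
The second term vanishes after integration by parts using $\mathrm{div}(K\ast\tilde v)=0$. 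The first is bounded by $\|K\ast w\|_{L^p}\|w\|_{L^q}\|\nabla v\|_{L^r}$ with Calderón--Zygmund/Sobolev inequalities and absorbed partly by $\|\nabla w\|_{L^2}^2$, giving a Gronwall inequality in $\mathbb{E}\|w_t\|_{L^2}^2$ (or after localisation, $\mathbb{P}$-almost surely). This closes pathwise uniqueness, and then Yamada--Watanabe upgrades the weak solution obtained by compactness to a probabilistically strong solution adapted to $(\mathcal{F}^W_t)$, yielding Definitions \ref{def:mean-strong} and \ref{def:uni-mean}.

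The main obstacle I anticipate is controlling the nonlinearity $(K\ast v)\cdot\nabla v$ uniformly in the approximation parameter, while simultaneously propagating the pointwise bounds in \eqref{regu-1}; the singularity of the Biot--Savart kernel means that the usual $L^\infty$ maximum principle is not directly applicable to the characteristics, and one must instead use the divergence form $\mathrm{div}((K\ast v)v)$ together with careful $L^p$-estimates and Moser iteration to recover the pointwise preservation of $\mathrm{ess\,inf}\,v_0$ and $\mathrm{ess\,sup}\,v_0$ required for the downstream relative-entropy machinery.
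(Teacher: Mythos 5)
This lemma is not proved in the paper at all: it is quoted verbatim from the authors' earlier work \cite[Theorem 3.1]{shao2024quantitative}, and the text introducing it says explicitly that ``the following two results from \cite{shao2024quantitative} establish the well-posedness\dots''. There is therefore no in-paper proof to compare your proposal against; the paper's ``proof'' is simply the citation.

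That said, your sketch is the canonical route to this kind of result and looks sound. The Stratonovich-to-Itô conversion, the observation that $\operatorname{div}(K\ast v)=0$ and $\operatorname{div}\sigma=0$ make both the nonlinear transport and the transport noise conservative in $L^2$, the exact cancellation of the Itô quadratic-variation term against the Stratonovich correction in the $L^2$ test, the Galerkin/mollification plus stochastic compactness, the pathwise uniqueness from the $L^2$ energy of the difference (where the $\langle w,(K\ast\tilde v)\cdot\nabla w\rangle$ term vanishes and the other is closed by Calder\'on--Zygmund/Sobolev and a stochastic Gronwall after localisation), and the final Yamada--Watanabe upgrade are all the right ingredients, and each step is individually correct. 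Two small remarks: since $v\in L^1\cap L^\infty$ gives $K\ast v\in L^\infty$ in $2$D, a Lagrangian/characteristics maximum principle actually works without difficulty, so the obstacle you flag at the end is milder than stated; and on a fixed probability space the Galerkin limit can be identified directly without passing through Skorokhod's representation. Neither affects the correctness of the plan.
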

\begin{lemma}[{\cite[Theorem 1.1]{shao2024quantitative}}]\label{thm:entropy}
	Assume that the probability measure $v_0=\mathcal{L}(X(0)) $  on $\mathbb{T}^2$ has a  density $v_0\in H^{3}(\mathbb{T}^2)$ and  $\underset{x\in \mathbb{T}^2 }{\inf}v_0>0.$ Then,
	it holds that
	\begin{align*}
		H(F_t^N|\bar{F}^N_t)\leq \exp\bigg(C_{0}\int_{0}^{t}(\|  v_s\|_{H^4}^2 +1)\mathd s\bigg)\quad\forall t\in [0,T]\quad\mathbb{P}-a.s.,
	\end{align*}
	where $C_{0}$ is a positive deterministic constant depending on $\|v_0\|_{L^2(\mathbb{T}^2)}$ and $\underset{x\in \mathbb{T}^2 }{\inf}v_0,$  $F_t^N$ and $\bar{F}^N_t$ are random measures on torus $\mathbb{T}^{2N}$ defined in \eqref{def:F} and \eqref{def:barF}, and $(  v_t)_{t\in [0,T]}$ is the unique probabilistically strong solution to the stochastic 2-dimensional Navier-Stokes  equation \eqref{eqt:mean} with the initial data $  v_0$  in the sense of Definition \ref{def:mean-strong} and \ref{def:uni-mean}. 
\end{lemma}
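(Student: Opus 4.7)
The plan is to extend the Jabin--Wang global relative entropy method for singular kernels to the common-noise setting by working pathwise in $\omega$. Specifically, I would derive the stochastic Fokker--Planck equations satisfied by the random densities $F_t^N$ and $\bar{F}_t^N$ (conditional on $\mathcal{F}_T^W$), show that the common-noise contributions cancel exactly between the two systems when differentiating the relative entropy, and then control the remaining singular interaction term via Lemma \ref{lemma jw0} combined with the combinatorial cancellation lemma of \cite{jabin2018quantitative}, finishing with Gronwall's inequality.

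First, since $\sigma$ is divergence-free, a Stratonovich-to-It\^o computation shows that both random densities satisfy SPDEs of the form
\[
\mathd F_t^N = \Bigl[\Delta_{x^N} F_t^N - \mathrm{div}_{x^N}(b^N F_t^N) + \tfrac{1}{2}\sum_{i,j}\sigma(x_i)\sigma(x_j)^{T}\cdot\nabla_i\nabla_j F_t^N\Bigr]\mathd t - \sum_{i=1}^N \mathrm{div}_{x_i}\bigl(\sigma(x_i) F_t^N\bigr)\, \mathd W_t,
\]
where $b^N_i = \frac{1}{N}\sum_{j\neq i}K(x_i-x_j)$ in the case of $F_t^N$ and $b^N_i = K\ast v_t(x_i)$ in the case of $\bar{F}_t^N$. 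Applying It\^o's formula to $\log(F_t^N/\bar{F}_t^N)$ (after mollification) and integrating against $F_t^N$, the transport-noise contributions produce a local martingale that vanishes after localization, while the second-order Stratonovich corrections cancel pairwise because $\sigma$ is identical in both equations. What remains is the pathwise entropy dissipation identity
\[
\frac{\mathd}{\mathd t}H(F_t^N|\bar{F}_t^N) + I(F_t^N|\bar{F}_t^N) = -\sum_{i=1}^N \int F_t^N \bigl(b^N_i - K\ast v_t(x_i)\bigr) \cdot \nabla_{x_i}\log \tfrac{F_t^N}{\bar{F}_t^N}\, \mathd x^N,
\]
where $I$ denotes the conditional relative Fisher information; this is structurally identical to the deterministic Jabin--Wang setting but now holds $\mathbb{P}$-a.s.

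Young's inequality then absorbs half of the Fisher information on the left, leaving a term that is bounded, via Lemma \ref{lemma jw0}, by a multiple of $H(F_t^N|\bar{F}_t^N)/N$ plus $\tfrac{1}{bN}\log \int \bar{F}_t^N \exp\{bN\phi\}\,\mathd x^N$ for a suitable $\phi$ built from the defect $K\ast v_t - \frac{1}{N}\sum_{j\neq i} K(x_i-x_j)$ rescaled by $v_t$. Applying the Jabin--Wang cancellation lemma pathwise in $\omega$, using the strict positivity $\inf v_0 > 0$ to keep $\bar{F}_t^N$ bounded below and the spatial $H^4$-regularity of $v_t$ to control error terms in the combinatorial expansion, yields an exponential-moment bound of order $C(\|v_t\|_{H^4}^2 + 1)$. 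A final Gronwall argument then produces the stated pathwise bound, with the stochastic constant reflecting the randomness of $v$.

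The main obstacle is carrying out the It\^o calculus on random densities rigorously: the singular nature of the Biot--Savart kernel requires careful mollification, and one must verify that the Stratonovich common-noise corrections cancel not only within each Fokker--Planck equation but also in the cross terms arising from differentiating $\log(F_t^N/\bar{F}_t^N)$. A secondary difficulty is that the Jabin--Wang cancellation lemma, originally used with a deterministic time-dependent reference, must here be applied pathwise in $\omega$ with a random reference $v_t$ possessing only spatial Sobolev regularity, which is precisely what forces the bound to depend on $\int_0^t \|v_s\|_{H^4}^2 \mathd s$ rather than on a deterministic constant.
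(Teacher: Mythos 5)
This lemma is not proved in the present paper; it is imported verbatim as \cite[Theorem~1.1]{shao2024quantitative}, so there is no internal proof to compare against. That said, your sketch faithfully reconstructs the method the cited reference is based on (and which the present paper describes in its introduction): pass to the conditional Fokker--Planck densities $F_t^N$ and $\bar{F}_t^N$ given $\mathcal{F}_T^W$, observe that the divergence-free common-noise transport terms contribute a local martingale plus second-order corrections that cancel between numerator and denominator of $\log(F_t^N/\bar{F}_t^N)$, absorb the conditional Fisher information via Young, invoke Lemma~\ref{lemma jw0} and the Jabin--Wang combinatorial exponential-moment estimate applied pathwise in $\omega$ with the random reference $v_t$, and close with Gronwall. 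Your observation that the pathwise nature of the argument forces the bound to depend on the random quantity $\int_0^t\|v_s\|_{H^4}^2\,\mathd s$ rather than a deterministic constant is exactly the right structural point, and it is the reason the present paper needs the localization by stopping times $\tau_R$ built on $\mathcal{R}_t^{-1}(v)$ in Section~\ref{sec:tightness}. The one caveat, which you flag yourself, is that making the It\^o calculus on the singular conditional densities rigorous (mollification, well-definedness of $\log(F^N/\bar F^N)$, integrability of the martingale integrand) is where the bulk of the work lies and cannot be read off from the formal identity; a blind sketch cannot verify that this is carried out correctly in \cite{shao2024quantitative}, only that the outline is the natural one.
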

The following lemma will be applied in the proof of Proposition \ref{prop:strong-convergence}, in which  the strong convergence between the  stochastic point vortex model \eqref{eqt:vortex} and the conditional Mckean-Vlasov equation \eqref{eqt:ncopy} is established.
\begin{lemma}[{\cite[Lemma 3.3]{fournier2014propagation}}]\label{fournier3.3}
	For any $r\in(0,2)$ and $\beta>\frac{r}{2},$ there exists a constant $C_{r,\beta}>0$ depends on $r,\beta$ such that for any probability measure $\rho$ on $\mathbb{T}^2\times \mathbb{T}^2$
	with finite Fisher information $I(\rho)$,
	\begin{align*}\int_{\mathbb{T}^{2}}\frac{1}{|x_1-x_2|^{r}}\rho (\mathd x_1\mathd x_2)\leq C_{r,\beta}(I^{\beta}(\rho)+1).
	\end{align*}
Here the Fisher information $I(\rho)$ on $\mathbb{T}^2$ is defined by $\int_{\mathbb{T}^{2}}\frac{|\nabla\rho|^2}{\rho}\mathd x.$ 
\end{lemma}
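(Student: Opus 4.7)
The plan is to reduce the four-dimensional singular integral to a two-dimensional one via a volume-preserving change of variables on $\mathbb{T}^2\times\mathbb{T}^2$, and then carry out the two-dimensional estimate by combining H\"older's inequality with the Gagliardo--Nirenberg embedding applied to $\sqrt{m}$. First, introduce the diffeomorphism $\Phi:\mathbb{T}^2\times\mathbb{T}^2\to\mathbb{T}^2\times\mathbb{T}^2$ given by $\Phi(x_1,x_2)=(x_1-x_2,x_2)$, a linear bijection with unit Jacobian. Writing $\tilde\rho(y_1,y_2):=\rho(y_1+y_2,y_2)$, the singular integral transforms into
\begin{equation*}
\int_{\mathbb{T}^2\times\mathbb{T}^2}\frac{\rho(x_1,x_2)}{|x_1-x_2|^r}\mathd x_1\mathd x_2=\int_{\mathbb{T}^2}\frac{m(y)}{|y|^r}\mathd y,\qquad m(y):=\int_{\mathbb{T}^2}\tilde\rho(y,y_2)\mathd y_2,
\end{equation*}
so that the problem is reduced to estimating a singular integral against the first marginal $m$ of $\tilde\rho$, which is a probability density on $\mathbb{T}^2$.

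The next step is a Fisher-information contraction: by Cauchy--Schwarz,
\begin{equation*}
|\nabla m(y)|^2=\Bigl|\int \nabla_{y}\tilde\rho(y,y_2)\mathd y_2\Bigr|^2\leq m(y)\int\frac{|\nabla_{y}\tilde\rho|^2}{\tilde\rho}\mathd y_2,
\end{equation*}
which after dividing by $m(y)$, integrating over $y\in\mathbb{T}^2$, and changing variables back through $\Phi$ yields $I(m)\leq I(\rho)$. Next set $f:=\sqrt{m}$ on $\mathbb{T}^2$, so that $\|f\|_{L^2}^2=1$ and $\|\nabla f\|_{L^2}^2=I(m)/4$. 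Apply H\"older's inequality to separate the singularity: for any $a>2/(2-r)$ with conjugate exponent $a'$,
\begin{equation*}
\int_{\mathbb{T}^2}\frac{f^2(y)}{|y|^r}\mathd y\leq \|f\|_{L^{2a}(\mathbb{T}^2)}^2\Bigl(\int_{\mathbb{T}^2}|y|^{-ra'}\mathd y\Bigr)^{1/a'},
\end{equation*}
the second factor being finite precisely because $ra'<2$. Then invoke the two-dimensional Gagliardo--Nirenberg inequality on the torus
\begin{equation*}
\|f\|_{L^{2a}}\lesssim \|\nabla f\|_{L^2}^{1-1/a}\|f\|_{L^2}^{1/a}+\|f\|_{L^2},
\end{equation*}
which with $\|f\|_{L^2}=1$ gives $\|f\|_{L^{2a}}^2\lesssim I(m)^{1-1/a}+1$. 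Set $a:=1/(1-\beta)$; the two constraints $a>2/(2-r)$ and $1-1/a=\beta$ are jointly satisfiable exactly when $\beta>r/2$, producing the claimed bound $\int m/|y|^r\mathd y\leq C_{r,\beta}(I(m)^{\beta}+1)$ for $\beta\in(r/2,1)$. Combining with $I(m)\leq I(\rho)$ yields the lemma for such $\beta$, and the case $\beta\geq 1$ follows by a trivial case split on whether $I(m)\geq 1$ or $I(m)\leq 1$.

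The main obstacle is the tight matching of exponents: the integrability of $|y|^{-r}$ demands $a>2/(2-r)$, while matching the target power of the Fisher information forces $1-1/a=\beta$; these constraints are compatible precisely when $\beta>r/2$, with no slack. One must also keep track of the additive term $\|f\|_{L^2}$ in the Gagliardo--Nirenberg inequality on the compact torus (which is absent on $\mathbb{R}^2$), but this is harmless since $\|f\|_{L^2}=1$ uniformly and only contributes the ``$+1$'' in the final bound.
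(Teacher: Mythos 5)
The paper quotes this lemma directly from Fournier--Hauray--Mischler (2014) without reproducing its proof, so there is no in-paper argument to compare against; your proof is correct and follows the standard route used in that reference. The three ingredients are all handled properly: the volume-preserving change of variables to the difference marginal $m$, the Fisher-information contraction $I(m)\leq I(\rho)$ via Cauchy--Schwarz and $|\nabla_{x_1}\rho|\leq|\nabla_{x_1,x_2}\rho|$, and the H\"older plus Gagliardo--Nirenberg step on $\sqrt{m}$ with the exponent bookkeeping $a=1/(1-\beta)$, $1-1/a=\beta$, $ra'<2$ holding precisely when $\beta>r/2$ (with the extra lower-order term in the torus GN inequality harmlessly absorbed since $\|\sqrt{m}\|_{L^2}=1$, and the $\beta\geq1$ case correctly reduced to $\beta\in(r/2,1)$).
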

\subsection{Besov space}\label{besov space}
In this subsection, we collect useful results related to Besov spaces.  We use $(\Delta_{i})_{i\geq -1}$ to denote the Littlewood-Paley blocks for a dyadic partition of unity. Besov spaces  $B^{\alpha}_{p,q}$ 
on the torus with  $\alpha\in \mathbb{R}$ and $1\leq p,q\leq \infty$,  are defined as the completion of $C^{\infty}$ with respect to the norm
\begin{equation*}
	\|f\|_{B^{\alpha}_{p,q}}\assign \( \sum_{n\geq -1}\left(2^{n\alpha q }\|\Delta_{n}f\|_{L^p}^q \right) \) ^{\frac{1}{q}}.
\end{equation*}

We remark that  $B^{\alpha}_{2,2}$ coincides with the Sobolev space $H^{\alpha}$, $\alpha\in \mathbb{R}$. We say $f\in C^\alpha$, $\alpha\in \mathbb{N}$, if $f$ is $\alpha$-times differentiable. For $\alpha \in \mathbb{R}\setminus \mathbb{N} $, we set $C^{\alpha}=B^{\alpha}_{\infty,\infty}$. We will often write $\|\cdot\|_{C^{\alpha}}$ instead of $\|\cdot\|_{B^{\alpha}_{\infty,\infty}}$. In the case $\alpha\in \mathbb{R}^+\setminus \mathbb{N}$, $C^{\alpha}$ coincides with the usual H\"older space. We use $C^{\infty}$ to denote the space of infinitely differentiable functions on $\mathbb{T}^2$.

We quote the following results about  Besov spaces.
\begin{lemma}[{\cite[Proposition 4.6]{triebel2006theory}}]\label{lemma embedding}Let $\alpha\in \mathbb{R}$, $\beta\in \mathbb{R}$ and $p_1,p_2,q_1,q_2\in [1,\infty]$. Then the embedding
	\begin{equation*}
		B^{\alpha}_{p_1,q_2}\hookrightarrow B^{\beta}_{p_2,q_2}
	\end{equation*}
	is compact if and only if,
	\begin{equation*}
		\alpha-\beta>d\(\frac{1}{p_1}-\frac{1}{p_2}\)_{+}.
	\end{equation*}
\end{lemma}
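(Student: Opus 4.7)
The plan is to prove this classical compactness criterion for Besov embeddings by direct manipulation of the Littlewood--Paley characterization of the norm, separating the sufficiency and necessity of the dimensional gap condition $\alpha-\beta > d(1/p_1-1/p_2)_+$.

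For sufficiency, I would first establish the continuous embedding via Bernstein's inequality: each dyadic block $\Delta_n f$ has Fourier support in an annulus of radius $\sim 2^n$, yielding $\|\Delta_n f\|_{L^{p_2}} \lesssim 2^{nd(1/p_1-1/p_2)_+}\|\Delta_n f\|_{L^{p_1}}$. Multiplying by $2^{n\beta}$ and summing in $\ell^q$ gives $\|f\|_{B^{\beta}_{p_2,q}} \lesssim \|f\|_{B^{\alpha}_{p_1,q}}$ whenever $\alpha-\beta \ge d(1/p_1-1/p_2)_+$. To upgrade to compactness, given a bounded sequence $(f_k)$ in $B^{\alpha}_{p_1,q_1}$, I would perform a low/high-frequency split $f_k = S_N f_k + R_N f_k$ where $S_N f_k = \sum_{n\le N}\Delta_n f_k$. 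Since the torus is compact, $S_N f_k$ lies in the finite-dimensional space of trigonometric polynomials of degree $\lesssim 2^{N+1}$, so along a subsequence it converges in $B^{\beta}_{p_2,q_2}$. For the tail, the strict inequality yields $\|R_N f_k\|_{B^{\beta}_{p_2,q_2}} \lesssim 2^{-N\delta}$ with $\delta = \alpha-\beta-d(1/p_1-1/p_2)_+ > 0$, uniformly in $k$, and a diagonal subsequence argument then delivers the desired convergent subsequence.

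For necessity, I would construct an explicit bounded sequence in $B^{\alpha}_{p_1,q_1}$ with no $B^{\beta}_{p_2,q_2}$-convergent subsequence when $\alpha-\beta \le d(1/p_1-1/p_2)_+$. The canonical choice is frequency-localized bumps $\psi_n$ whose Fourier supports lie in disjoint dyadic annuli at scale $2^n$, normalized so that $\|\psi_n\|_{B^{\alpha}_{p_1,q_1}} \sim 1$. In the borderline regime Bernstein's inequality is saturated, giving $\|\psi_n\|_{B^{\beta}_{p_2,q_2}} \gtrsim 1$; combined with the disjointness of the Fourier supports (which forces any weak limit to be zero), this rules out strong convergence along any subsequence.

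The main obstacle is handling the sharp boundary case $\alpha-\beta = d(1/p_1-1/p_2)_+$ together with the full range of secondary indices $q_1, q_2 \in [1,\infty]$, where the logarithmic blow-up and the interaction between Bernstein's inequality and the $\ell^q$-summation must be tracked carefully. Since the statement is cited directly from Triebel's monograph, the cleanest route is to refer to the detailed argument given there rather than re-prove every parameter regime; the sketch above captures the essential mechanism.
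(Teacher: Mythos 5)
The paper does not prove this lemma: it is quoted directly as a known result from Triebel's monograph, so there is no in-paper proof to compare against. Your sketch is a correct account of the standard argument on a bounded domain, and it captures the two essential mechanisms: (i) Bernstein's inequality for the block-by-block $L^{p_1}\to L^{p_2}$ gain, combined with the low/high frequency split and the finite-dimensionality of low-frequency trigonometric polynomials on $\mathbb{T}^d$, to get compactness when the gap is strict; (ii) dyadically localized bumps normalized in $B^{\alpha}_{p_1,q_1}$ whose $B^{\beta}_{p_2,q_2}$-norms stay bounded below, together with weak convergence to zero from disjoint Fourier supports, to show non-compactness otherwise.

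Two small clarifications. First, in the tail estimate you implicitly pass from an $\ell^{q_2}$-sum of the block norms to the $\ell^{q_1}$-norm appearing in $\|f_k\|_{B^{\alpha}_{p_1,q_1}}$; this is justified because the geometric factor $2^{-n\delta}$ lets you apply H\"older when $q_2<q_1$ and the inclusion $\ell^{q_1}\subset\ell^{q_2}$ when $q_2\ge q_1$, but it is worth saying so explicitly since $q_1\neq q_2$ is allowed. Second, your closing paragraph somewhat overstates the difficulty of the boundary case $\alpha-\beta=d(1/p_1-1/p_2)_+$: for the \emph{compactness} statement this case sits entirely on the ``necessity'' side and your bump construction already handles it, with no logarithmic analysis required; the delicate $q$-dependence at the borderline only enters when one asks whether the embedding is merely \emph{continuous}, which is not what is being asserted. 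With those caveats, the sketch is sound and matches the standard proof that the paper is citing.
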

\begin{lemma}
	\label{lemma triebel}
	(i) Let $\alpha,
	\beta \in \mathbb{R}$ and $p, p_1, p_2, q \in [1, \infty]$ be such that
	$\frac{1}{p} = \frac{1}{p_1} + \frac{1}{p_2}$. The bilinear map $(u, v)
	\mapsto u \nocomma v$ extends to a continuous map from $B_{p_1, q}^{\alpha}
	\times B_{p_2, q}^{\beta}$ to $B_{p \comma q}^{\alpha \wedge \beta}$ if
	$\alpha + \beta > 0$ (cf. {\cite[Corollary 2]{mourrat2017global}}).
	
	(ii) (Duality.) Let $\alpha\in (0,1)$, $p,q\in[1,\infty]$, $p'$ and $q'$ be their conjugate exponents, respectively. Then the mapping  $(u, v)\mapsto \<u,v\>=\int uv \dif x$  extends to a continuous bilinear form on $B^\alpha_{p,q}\times B^{-\alpha}_{p',q'}$, and one has $|\<u,v\>|\lesssim \|u\|_{B^\alpha_{p,q}}\|v\|_{B^{-\alpha}_{p',q'}}$ (cf.  \cite[Proposition~7]{mourrat2017global}).
\end{lemma}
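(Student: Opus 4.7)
The plan is to prove both parts by combining the Littlewood--Paley decomposition with Bony's paradifferential calculus for (i) and with spectral orthogonality for (ii).

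For part (i), I would begin with Bony's decomposition $uv = \pi_<(u,v) + \pi_<(v,u) + \pi_=(u,v)$, where the paraproduct $\pi_<(u,v) = \sum_{n\geq -1} S_{n-1} u\, \Delta_n v$ and the resonant term $\pi_=(u,v) = \sum_{|n-n'|\leq 1} \Delta_n u\, \Delta_{n'} v$. The Fourier support of each summand $S_{n-1} u\, \Delta_n v$ lies in a dyadic annulus of size $\sim 2^n$, so $\Delta_j \pi_<(u,v)$ is nonzero only for $|n-j|\leq N_0$ with a fixed $N_0$. Bernstein's inequality combined with H\"older's inequality (with $1/p = 1/p_1 + 1/p_2$) then yields
\begin{equation*}
\|\Delta_j \pi_<(u,v)\|_{L^p} \lesssim \|u\|_{L^{p_1}} \|\Delta_j v\|_{L^{p_2}} \lesssim 2^{-j\beta} a_j \|u\|_{B^\alpha_{p_1,\infty}} \|v\|_{B^\beta_{p_2,q}},
\end{equation*}
with $(a_j)\in\ell^q$; the regularity saturates at $\beta$. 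By symmetry, $\pi_<(v,u)$ yields a $B^\alpha_{p,q}$-bound.

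The subtle piece is the resonant term: the Fourier support of $\Delta_n u\, \Delta_{n'} v$ with $|n-n'|\leq 1$ lies in a ball of radius $\lesssim 2^n$, so $\Delta_j \pi_=(u,v)$ collects contributions from all $n \geq j - N$. I would then estimate
\begin{equation*}
\|\Delta_j \pi_=(u,v)\|_{L^p} \lesssim \sum_{n\geq j-N} 2^{-n(\alpha+\beta)} c_n d_n \|u\|_{B^\alpha_{p_1,q}} \|v\|_{B^\beta_{p_2,q}},
\end{equation*}
where the geometric series is summable in $\ell^q$ \emph{precisely} because $\alpha+\beta > 0$, yielding an even stronger $B^{\alpha+\beta}_{p,q}$-bound. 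Combining the three pieces and observing that both $\alpha$ and $\beta$ majorize $\alpha\wedge\beta$ produces the claim.

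For part (ii), I would exploit spectral orthogonality on the torus: $\int_{\mathbb{T}^d} \Delta_n u\, \Delta_{n'} v\, dx = 0$ whenever $|n-n'|\geq 2$, because the Fourier supports are then disjoint. Writing $u = \sum_n \Delta_n u$, $v = \sum_{n'} \Delta_{n'} v$, and pairing, the double sum reduces to $|n-n'|\leq 1$. H\"older's inequality in space then gives
\begin{equation*}
|\langle u,v\rangle| \leq \sum_{|n-n'|\leq 1} \|\Delta_n u\|_{L^p} \|\Delta_{n'} v\|_{L^{p'}} \lesssim \sum_n \bigl(2^{n\alpha}\|\Delta_n u\|_{L^p}\bigr)\bigl(2^{-n\alpha}\|\Delta_n v\|_{L^{p'}}\bigr),
\end{equation*}
and a final H\"older estimate in the dyadic index with conjugate exponents $q, q'$ concludes that $|\langle u,v\rangle| \lesssim \|u\|_{B^\alpha_{p,q}} \|v\|_{B^{-\alpha}_{p',q'}}$. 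The restriction $\alpha\in(0,1)$ is convenient for the classical definition of $B^\alpha_{p,q}$ via finite differences, but the argument above does not use it.

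The main obstacle is the spectral localization bookkeeping in part (i), especially for the resonant piece: one must carefully track which frequency interactions contribute to $\Delta_j(uv)$, and verify that the $\ell^q$-summability over $n\geq j$ is restored precisely by the hypothesis $\alpha+\beta > 0$. Once Bony's decomposition and the spectral supports are in place, the remainder is routine Bernstein plus H\"older analysis, and part (ii) follows quickly from the same spectral considerations.
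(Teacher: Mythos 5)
The paper does not prove this lemma; it is quoted verbatim from the cited reference (Mourrat--Weber), and both parts are standard paraproduct/Littlewood--Paley estimates. Your proposal supplies the standard Bony-decomposition proof, which is the same route the reference takes, so the comparison is really with the literature rather than with the paper. Part (ii) is correct as written: near-orthogonality of the dyadic blocks plus H\"older in space and then in the dyadic index, and your remark that $\alpha\in(0,1)$ is not needed is also right.

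In part (i) there is a genuine gap in the treatment of the low-frequency paraproduct. You write
\begin{equation*}
\|\Delta_j \pi_<(u,v)\|_{L^p}\lesssim \|u\|_{L^{p_1}}\|\Delta_j v\|_{L^{p_2}}\lesssim 2^{-j\beta}a_j\,\|u\|_{B^{\alpha}_{p_1,\infty}}\|v\|_{B^{\beta}_{p_2,q}},
\end{equation*}
and the second inequality uses $\|u\|_{L^{p_1}}\lesssim\|u\|_{B^{\alpha}_{p_1,\infty}}$, which requires $\alpha>0$. But the hypothesis is only $\alpha+\beta>0$, and the lemma is applied in the paper precisely in the mixed-sign regime (e.g.\ $\sigma$ smooth against $\nabla\eta^N\in H^{-\alpha+1}$ with $\alpha>3$, and $K*\bar\eta$ against $v\in H^4$), so the case $\alpha\leq 0$ cannot be dismissed. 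For $\alpha<0$ one must instead estimate the low-frequency factor blockwise, $\|S_{n-1}u\|_{L^{p_1}}\lesssim 2^{-n\alpha}\|u\|_{B^{\alpha}_{p_1,\infty}}$, which makes $\pi_<(u,v)$ land in $B^{\alpha+\beta}_{p,q}$ rather than $B^{\beta}_{p,q}$; this still embeds into $B^{\alpha\wedge\beta}_{p,q}$ because $\alpha+\beta>\alpha=\alpha\wedge\beta$, so the conclusion survives, but the case distinction is necessary and your argument as stated silently assumes both exponents are positive. (There is also a well-known delicacy at $\alpha\wedge\beta=0$, where $B^{0}_{p,q}\not\hookrightarrow L^{p}$ for $q>1$, which your proof --- like the reference's statement --- glosses over; the paper never invokes the lemma at that endpoint, so this is a minor caveat rather than an error in context.)
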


\begin{lemma}[{{\cite[Corollary
			2.86]{chemin2011fourier}} }]\label{lemma:infity} For any positive real number $\alpha$ and any $p,q\in [1,\infty]$, it holds that
	\begin{align*}
		\|fg\|_{B^{\alpha}_{p,q}}\lesssim\|f\|_{L^{\infty}}\|g\|_{B^{\alpha}_{p,q}}+\|f\|_{B^{\alpha}_{p,q}}\|g\|_{L^{\infty}},
	\end{align*}
	with the proportional constant independent of $f$ and $g$.
\end{lemma}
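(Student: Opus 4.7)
The plan is to apply Bony's paraproduct decomposition, splitting the product as
\begin{equation*}
fg \;=\; T_f g + T_g f + R(f,g),
\end{equation*}
where $T_f g := \sum_j S_{j-1} f \cdot \Delta_j g$ and $T_g f := \sum_j S_{j-1} g \cdot \Delta_j f$ are the two paraproducts (with $S_{j-1} := \sum_{k\leq j-2}\Delta_k$ the usual low-frequency projection), and $R(f,g) := \sum_{|j-k|\leq 1} \Delta_j f \cdot \Delta_k g$ is the resonance term. Each of the three pieces can be analyzed separately using only the Fourier support of the Littlewood--Paley blocks together with H\"older's inequality.

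First I would handle the two paraproducts, which in fact require no sign restriction on $\alpha$. The key observation is that each summand $S_{j-1} f \cdot \Delta_j g$ has Fourier support in an annulus of scale $2^j$, so $\Delta_n(T_f g)$ receives contributions only from indices $j$ with $|n-j|\leq c$ for some fixed constant $c$. H\"older's inequality then gives
\begin{equation*}
\|\Delta_n(T_f g)\|_{L^p} \;\lesssim\; \|f\|_{L^\infty} \sum_{|n-j|\leq c}\|\Delta_j g\|_{L^p}.
\end{equation*}
Multiplying by $2^{n\alpha}$ and taking the $\ell^q$ norm in $n$ yields $\|T_f g\|_{B^\alpha_{p,q}} \lesssim \|f\|_{L^\infty}\|g\|_{B^\alpha_{p,q}}$, and the symmetric argument produces $\|T_g f\|_{B^\alpha_{p,q}} \lesssim \|g\|_{L^\infty}\|f\|_{B^\alpha_{p,q}}$, matching the two terms on the right-hand side of the claimed estimate.

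The remaining term $R(f,g)$ is the main obstacle, and it is exactly here that the hypothesis $\alpha > 0$ must enter. Each block $\Delta_j f \cdot \Delta_k g$ with $|j-k|\leq 1$ has Fourier support in a ball of radius $\sim 2^j$ rather than an annulus, so the clean frequency localization of the previous step is lost and $\Delta_n R(f,g)$ now receives contributions from all $j \geq n-C$. I would bound
\begin{equation*}
\|\Delta_n R(f,g)\|_{L^p} \;\lesssim\; \sum_{j\geq n-C} \|\Delta_j f\|_{L^\infty}\|\tilde{\Delta}_j g\|_{L^p} \;\leq\; \|f\|_{L^\infty}\sum_{j\geq n-C}\|\tilde{\Delta}_j g\|_{L^p},
\end{equation*}
with $\tilde{\Delta}_j := \Delta_{j-1}+\Delta_j+\Delta_{j+1}$. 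Multiplying by $2^{n\alpha}$ inserts a factor $2^{-\alpha(j-n)}$ into the sum, which is summable over $j \geq n$ precisely because $\alpha > 0$; a discrete Young's inequality in $\ell^q$ then converts the resulting convolution structure into $\|R(f,g)\|_{B^\alpha_{p,q}}\lesssim \|f\|_{L^\infty}\|g\|_{B^\alpha_{p,q}}$ (or, by the symmetric allocation, into the term with $\|g\|_{L^\infty}\|f\|_{B^\alpha_{p,q}}$). Summing the three contributions would complete the argument, with all implicit constants depending only on the dyadic partition and the dimension.
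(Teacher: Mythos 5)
The paper does not prove this lemma; it is quoted directly from \cite[Corollary 2.86]{chemin2011fourier}, where the argument is precisely the Bony paraproduct decomposition you describe (the two paraproducts bounded for any $\alpha\in\mathbb{R}$ via annular frequency support, the remainder requiring $\alpha>0$ for the geometric sum). Your proof is correct and matches the standard one in the cited reference.
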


\begin{lemma}[{\cite[Theorem 2.1 and 2.2]{kuhn2021convolution}}]
	\label{lemma convolution}Let $ \alpha,\beta \in \mathbb{R}, q ,q_1,q_2\in(0, \infty]$ and $p, p_{1}, p_{2} \in[1, \infty]$
	be such that
	$$
	1+\frac{1}{p}=\frac{1}{p_{1}}+\frac{1}{p_{2}},\quad \frac{1}{q}\leq\frac{1}{q_1}+\frac{1}{q_2}.
	$$
	\begin{enumerate}
		\item 	If $f \in B_{p_{1}, q}^{\alpha}$ and $g \in L^{p_{2}},$ then $f * g \in B_{p, q}^{\alpha}$ and
		$$
		\|f * g\|_{B_{p, q}^{\alpha}}\lesssim\| f\|_{B_{p_{1}, q}^{\alpha}} \cdot\left\|g \right\|_{L^{p_{2}}},
		$$

		\item	If $f\in B^{\alpha}_{p_1,q_1}$ and $g\in B^{\beta}_{p_2,q_2}$,
		then $f*g\in B^{\alpha+\beta}_{p,q}$ and
		\begin{align*}
			\|f * g\|_{B_{p, q}^{\alpha+\beta}}\lesssim\| f\|_{B_{p_{1}, q_1}^{\alpha}} \cdot\left\|g \right\|_{B^{\beta}_{p_2,q_2}},
		\end{align*}
		with the proportional constant independent of $f$ and $g$.
	\end{enumerate}

\end{lemma}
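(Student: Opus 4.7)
The plan is to reduce both items to the classical Young inequality for convolutions on Lebesgue spaces via the Littlewood--Paley characterization
$$
\|f\|_{B^{\alpha}_{p,q}} = \Bigl( \sum_{n\geq -1} 2^{n\alpha q} \|\Delta_n f\|_{L^p}^q \Bigr)^{1/q},
$$
together with the elementary observation that each Littlewood--Paley projector $\Delta_n$ is a Fourier multiplier, and therefore commutes with convolution: $\Delta_n(f*g) = (\Delta_n f)*g = f*(\Delta_n g)$. All of the work then reduces to bounding $\|\Delta_n(f*g)\|_{L^p}$ by a product structure and to summing the resulting estimate in $n$ in $\ell^q$.

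For item $(1)$, I would first apply this commutation together with the classical Young inequality on Lebesgue spaces with exponents obeying $1+\frac{1}{p} = \frac{1}{p_1}+\frac{1}{p_2}$ to obtain, for every $n\geq -1$,
$$
\|\Delta_n(f*g)\|_{L^p} = \|(\Delta_n f)*g\|_{L^p} \leq \|\Delta_n f\|_{L^{p_1}} \|g\|_{L^{p_2}}.
$$
Multiplying by $2^{n\alpha}$, taking the $\ell^q$-norm in $n$ and pulling out the $n$-independent factor $\|g\|_{L^{p_2}}$ then yields the claimed estimate $\|f*g\|_{B^{\alpha}_{p,q}}\lesssim \|f\|_{B^{\alpha}_{p_1,q}}\|g\|_{L^{p_2}}$.

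For item $(2)$, the extra ingredient is the Fourier-support localization of convolutions: since $\widehat{u*v} = \hat u \cdot \hat v$, the Fourier support of $\Delta_i f * \Delta_j g$ lies in the intersection of two dyadic annuli at scales $2^i$ and $2^j$, hence is non-empty only when $|i-j|$ is bounded by a universal constant; in that regime the product is frequency-localized at scale $\sim 2^{i\vee j}$, so $\Delta_n(\Delta_i f * \Delta_j g)$ vanishes unless $n \sim i\vee j$. Inserting the decompositions $f = \sum_i \Delta_i f$ and $g = \sum_j \Delta_j g$ and applying Young termwise therefore gives
$$
\|\Delta_n(f*g)\|_{L^p} \lesssim \sum_{\substack{|i-j|\leq c \\ i\vee j \sim n}} \|\Delta_i f\|_{L^{p_1}} \|\Delta_j g\|_{L^{p_2}}.
$$

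The remaining step, which I expect to be the main technical hurdle, is the $\ell^q$-summation in $n$. I would distribute the weight $2^{n(\alpha+\beta)}$ as $2^{i\alpha}\cdot 2^{j\beta}$, which is legitimate up to a constant because of the diagonal constraint $i\vee j \sim n$ and $|i-j|\leq c$, and then absorb the sum over the finitely many near-diagonal pairs. This reduces the problem to controlling $\|(a_i b_j)_{i\vee j \sim n}\|_{\ell^q_n}$ where $a_i = 2^{i\alpha}\|\Delta_i f\|_{L^{p_1}}$ and $b_j = 2^{j\beta}\|\Delta_j g\|_{L^{p_2}}$. Hölder's inequality for the discrete spaces $\ell^{q_1},\ell^{q_2}$, which is admissible precisely under the assumption $\frac{1}{q}\leq \frac{1}{q_1}+\frac{1}{q_2}$, then produces the product $\|f\|_{B^{\alpha}_{p_1,q_1}}\|g\|_{B^{\beta}_{p_2,q_2}}$. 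The subcase $q\in(0,1)$, where $\ell^q$ is only a quasi-norm, is handled by first invoking the $q$-triangle inequality $\|\sum_k a_k\|_{\ell^q}^q \leq \sum_k \|a_k\|_{\ell^q}^q$ on the finite near-diagonal sum and then applying Hölder; alternatively one can cite Kühn's Theorems 2.1 and 2.2 directly, since the argument above reproduces exactly that reasoning.
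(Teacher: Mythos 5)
The paper does not prove this lemma itself --- it is quoted verbatim from K\"uhn (2021), Theorems 2.1 and 2.2, with no proof given. Your self-contained Littlewood--Paley argument is correct: for item (1) the commutation $\Delta_n(f*g)=(\Delta_n f)*g$ together with Young's inequality is exactly the right reduction; for item (2) the key observation that $\widehat{\Delta_i f * \Delta_j g}=\widehat{\Delta_i f}\cdot\widehat{\Delta_j g}$ has support in the \emph{intersection} (not the sum) of the two annuli --- so that only the near-diagonal $|i-j|\lesssim 1$ with $i\vee j\sim n$ contributes --- eliminates any need for a paraproduct splitting, and the $\ell^q$-H\"older step (with the embedding $\ell^{q'}\hookrightarrow\ell^{q}$ for $q'\leq q$ accounting for the inequality $\tfrac1q\leq\tfrac1{q_1}+\tfrac1{q_2}$, and the $q$-triangle inequality for $q<1$) is applied correctly. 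This is the standard proof and, to the best of my knowledge, is the same route taken in the cited reference.
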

						
\section{Uniform estimates}\label{sec:tightness}	
The goal of this section is to prove the tightness of  laws of the fluctuation measures $(\eta^N)_{N\in\mathbb{N}}$  and the tightness of laws of the additive noise $(\mathcal{M}^N)_{N\in\mathbb{N}}.$ Recall that for $t\in[0,T],$ $\eta^N_t=\sqrt{N}(	\mu_N (t) - v_t)$ and $\mu_N(t) = \frac{1}{N} \sum_{i=1}^N \delta_{X_i(t)}.$ The additive noise term $(\mathcal{M}^N)_{N\in\mathbb{N}}$ given in Lemma  \ref{lemma m} below, satisfies for all $t
\in [0, T]$ and $\varphi\in C^\infty,$ 
	$\left\langle \mathcal{M}_t^N, \varphi \right\rangle	 = \frac{\sqrt{2}}{\sqrt{N}}  \sum_{i = 1}^N \int^t_0 \nabla \varphi (X_{i}) \cdot
	\mathd B_s^i,\quad \mathbb{P}$-a.s..
 
 Applying Itô's formula to the interacting particle system \eqref{eqt:vortex}, we derive  the following SPDE representation for the fluctuation measures $(\eta^N_{t})_{t\in[0,T]}$ i.e., for every $\varphi\in C^{\infty}(\mathbb{T}^2),$
\begin{align}\label{spde-etaN}
	\left\langle \varphi, \eta_t^N \right\rangle&=	\left\langle \varphi, \eta_0^N \right\rangle+\int_0^t \left\langle \Delta\varphi, \eta_s^N\right\rangle \mathd s
	+\frac{1}{2}\int_0^t  \left\langle\sigma\cdot\nabla\(\sigma\cdot\nabla\varphi\),\eta_s^N\right\rangle \mathd s\nonumber
	\\&+\sqrt{N}\int_{0}^{t}\left\langle  \nabla \varphi,  \mu_N(s)K*\mu_N(s)  \right\rangle \mathd s-\sqrt{N}\int_{0}^{t}t\left\langle  \nabla \varphi,  v_sK*v_s  \right\rangle \mathd s\nonumber
	\\&+\sqrt{2}\int_{0}^{t}\frac{1}{\sqrt{N}}\sum_{i=1}^{N}\nabla \varphi(X^i_s)\cdot\mathd B_i(s)+\int_{0}^{t}\left\langle \sigma\cdot \nabla\varphi,\eta_s^N\right\rangle \mathd W_s.
\end{align}
For simplicity, we define the following interacting terms $	\mathcal{K}_t^N :C^{\infty}(\mathbb{T}^2)\rightarrow \mathbb{R}$
\begin{align}\label{interact term}
	\mathcal{K}_t^N (\varphi)  \assign  & \sqrt{N}  \langle \nabla \varphi, K \ast
	\mu_N (t) \mu_N (t) \rangle - \sqrt{N}  \langle \nabla \varphi, v_t K
	\ast v_t  \rangle . 
\end{align}	
To establish the tightness of the fluctuation measures $\{\eta^N\}_{N\in \mathbb{N}}$ and the  additive noise term $\{\mathcal{M}^N\}_{N\in \mathbb{N}}$, we  first obtain the uniform estimates for $\eta_t^N,\nabla\cdot [K \ast \mu_N (t) \mu_N (t) - v_t K
\ast v_t],\mathcal{K}_t^N(\varphi)$ in Section \ref{ssec:relative}   following the estimates in \cite{wang2023gaussian}.  We also  establish an additional  estimate Lemma \ref{lemma: eta unit} in Section \ref{sec:compactness}, by exploiting the structure of \eqref{spde-etaN}.  The proof of tightness is more complicated than the case without environmental noise $W,$ since we have to deal with the new transport noise term $\sigma\cdot \nabla \eta^N_t \mathd W_t$  and the uniform bound for the relative entropy $\sup_{t \in [0, T]}H(F_t^N|\bar{F}^N_t)(\omega)$  in  Lemma \ref{thm:entropy}  depends on $\omega \in \Omega.$  To address these challenges, we use a classical localization argument.
\subsection{Estimates on the relative entropy }\label{ssec:relative}
In this section, we apply the relative entropy method to obtain uniform estimates essential for the subsequent proof of tightness. 
 
 The core idea behind the relative entropy method is to employ the Donsker-Varadhan variational formula, which gives Lemma \ref{lemma jw0}, to decompose the target integral into two terms. One term is the relative entropy, which is bounded almost surely ( i.e., Lemma \ref{thm:entropy}), while the other is an exponential-type integral that can be controlled using  estimates {\cite[Theorem 4]{jabin2018quantitative}} and  {\cite[Lemma 2.3]{wang2023gaussian}}. 
 Compared to  \cite{wang2023gaussian},  in the environmental noise case, the components $\{\bar{X}_i,i\in \mathbb{N}\}$ of  the limiting nonlinear SDE \eqref{eqt:ncopy} are no longer independent but conditionally independent and identically distributed, i.e., for $i\neq j$
\begin{align*}
	\mathcal{L}(\bar{X}_i(t)| \mathcal{F}^{W}_T)(\mathd x)=\mathcal{L}(\bar{X}_j(t)| \mathcal{F}^{W}_T)(\mathd x)=v_t(\mathd x),\quad \mathbb{P}-a.s. 
\end{align*}
in the sense of $\mathcal{P}(\mathbb{T}^2)$ and
\begin{align*}
	\bar{X}_i(t)\perp\bar{X}_j(t)|\mathcal{F}^{W}_T.
\end{align*} At this time,  two target measures in Lemma \ref{lemma jw0} are considered as $F^N(t)(\mathd x^N)$ and $\bar{F}_N(t)(\mathd x^N)$ defined in \eqref{def:F} and \eqref{def:barF}.
\begin{lemma}
		\label{lemma uni mu}For each $\alpha > 1$, there exist  constants
	$C_{\alpha}$ and $C$ such that  for all $N\in \mathbb{N},$
	\begin{enumerate}
		\item \begin{align*}
			&\mathbb{E}\[ \| \mu_N (t) - v_t \|^2_{H^{- \alpha}}\mid \mathcal{F}_T^{W}\] \\\leqslant&
			\frac{C_{\alpha}}{N} (\nocomma H (F_N (t)| \bar{F}_N(t)) + 1),\quad \forall t \in [0, T], \quad \mathbb{P}-a.s.,
		\end{align*}
	\item \begin{align*}
		&\mathbb{E} \[\| \nabla \cdummy [K \ast \mu_N (t) \mu_N (t) - v_t K
		\ast v_t] \|^2_{H^{- \alpha}}\mid \mathcal{F}_{T}^{W}\] \\\leqslant& \frac{C_{\alpha}}{N}
		(\nocomma H (F_N(t) | \bar{F}_N(t)) + 1) , \quad \forall t \in [0, T], \quad \mathbb{P}-a.s.,
	\end{align*}
\item 	\begin{align*}
	&\mathbb{E} \[|\langle \varphi K \ast (\mu_N (t) - v_t), \mu_N (t)
	- v_t \rangle| \mid \mathcal{F}_{T}^{W}\]\\\leqslant& \frac{C}{ N}  (H(F_N(t) |
	\bar{F}_N(t)) + 1) ,\quad \forall t \in [0, T], \quad \mathbb{P}-a.s.,\nonumber
\end{align*}
	\end{enumerate}
	where the random measures $F^N(t)(\mathd x^N)$ and $\bar{F}^N(t)(\mathd x^N)$  on torus $\mathbb{T}^{2N}$ are defined by \eqref{def:F}, \eqref{def:barF}, and  $v_t$ is the unique probabilistically strong solution to the mean field limit equation \eqref{eqt:mean} in the sense of Definitions \ref{def:mean-strong} and \ref{def:uni-mean}.
\end{lemma}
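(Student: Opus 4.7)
The proof strategy is to follow the conditional relative entropy framework of \cite{wang2023gaussian} adapted to the setting with environmental noise. The key observation is that, once we condition on $\mathcal{F}_T^{W}$, the mean field solution $v_t$ is deterministic and the reference measure $\bar{F}^N(t)$ coincides with the product measure $v_t^{\otimes N}$ on $\mathbb{T}^{2N}$. Hence Lemma \ref{lemma jw0} applies $\omega$-wise: for any bounded $\phi:\mathbb{T}^{2N}\to\mathbb{R}$ and any $b>0$,
\[
\mathbb{E}\bigl[\phi(X^N(t))\,\big|\,\mathcal{F}_T^{W}\bigr]\leq \frac{1}{bN}\Bigl(H(F^N(t)\,|\,\bar{F}^N(t))+\log \mathbb{E}\bigl[e^{bN\phi(\bar{X}^N(t))}\,\big|\,\mathcal{F}_T^{W}\bigr]\Bigr),
\]
$\mathbb{P}$-almost surely, with the right-hand expectation taken under the i.i.d.\ conditional law $v_t^{\otimes N}$.

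For (1), I would expand the negative Sobolev norm in Fourier modes,
\[
\|\mu_N(t)-v_t\|_{H^{-\alpha}}^2=\sum_{k\in\mathbb{Z}^2}(1+|k|^2)^{-\alpha}\Bigl|\tfrac{1}{N}\sum_{i=1}^{N}e_k(X_i(t))-\langle e_k,v_t\rangle\Bigr|^2,
\]
take conditional expectation, and apply the Donsker--Varadhan bound above to each Fourier mode with $\phi_k$ equal to the squared summand. Under $v_t^{\otimes N}$, the normalized sum $\frac{1}{\sqrt N}\sum_i(e_k(\bar X_i)-\langle e_k,v_t\rangle)$ is sub-Gaussian with constant independent of $k$ and of $v_t$ (since $|e_k|\equiv 1$ on the torus), so $\mathbb{E}[e^{bN\phi_k}\,|\,\mathcal{F}_T^W]$ is bounded uniformly in $N$ and $k$ for $b$ small enough. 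Summing against the weights $(1+|k|^2)^{-\alpha}$, which is summable on $\mathbb{Z}^2$ for $\alpha>1$, closes (1).

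For (2) and (3), the quadratic dependence on $\mu_N-v_t$ combined with the singular Biot--Savart kernel requires the sharper exponential estimate of \cite[Theorem 4]{jabin2018quantitative} (see also \cite[Lemma 2.3]{wang2023gaussian}). I would first decompose
\[
K\ast\mu_N\mu_N-v_t K\ast v_t=(\mu_N-v_t)K\ast v_t+v_t K\ast(\mu_N-v_t)+(\mu_N-v_t)K\ast(\mu_N-v_t),
\]
so that the two linear-in-fluctuation pieces reduce to the Fourier expansion of (1), while the genuinely quadratic remainder is handled by the singular-kernel exponential estimate applied conditionally on $\mathcal{F}_T^W$, with $v_t$ playing the role of the reference density. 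Estimate (3) follows by applying Lemma \ref{lemma jw0} directly to $\phi=|\langle\varphi K\ast(\mu_N-v_t),\mu_N-v_t\rangle|$ and invoking the same singular exponential bound.

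The main obstacle I foresee is verifying that the constants in the singular-kernel exponential bound of \cite[Theorem 4]{jabin2018quantitative} depend on $v_t$ only through $\|v_t\|_{L^\infty}$ and $\inf_{\mathbb{T}^2}v_t$, hence are $\omega$-wise uniform. This uniformity is guaranteed by the maximum and minimum preservation property \eqref{regu-1}, which yields $\underset{x\in\mathbb{T}^2}{\mathrm{ess\,sup}}\,v_t\leq\underset{x\in\mathbb{T}^2}{\mathrm{ess\,sup}}\,v_0$ and $\underset{x\in\mathbb{T}^2}{\mathrm{ess\,inf}}\,v_t\geq\underset{x\in\mathbb{T}^2}{\mathrm{ess\,inf}}\,v_0>0$ almost surely. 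Once these exponential moments are controlled uniformly in $\omega$, the three claimed bounds follow by dividing by $bN$ and absorbing universal constants into $C_\alpha$ and $C$.
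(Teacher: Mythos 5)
Your proof follows the same strategy as the paper's: condition on $\mathcal{F}_T^W$ so that $\bar F^N(t)=v_t^{\otimes N}$ becomes the reference measure, apply the Donsker--Varadhan variational inequality (Lemma~\ref{lemma jw0}) $\omega$-wise, and then invoke the exponential-moment estimates with the maximum principle \eqref{regu-1} ensuring the constants are $\omega$-independent. The paper's own proof is even more terse---it performs exactly this conditioning/DV reduction in a single display and then cites \cite[Lemmas 2.6--2.9]{wang2023gaussian} for the remaining exponential bounds---so your sketch is simply an expansion of the cited analysis.
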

\begin{proof}
	We now obtain estimates concerning the fluctuation measures, based on the results in  \cite{wang2023gaussian}. For example, we express the conditional expectation $	\mathbb{E}\[ \| \mu_N (t) - v_t \|^2_{H^{- \alpha}}\mid \mathcal{F}_T^{W}\]$ through the conditional law of particles $X^N(t)$ with respect to the environmental noise $\mathcal{F}_T^{W},$
	\begin{align*}
		\mathbb{E}\[ \| \mu_N (t) - v_t \|^2_{H^{- \alpha}}\mid \mathcal{F}_T^{W}\]=\int_{\mathbb{T}^{dN}} \| \mu_N  - v_t \|^2_{H^{- \alpha}} F^N(t) (\mathd x^N),
	\end{align*}
	where $F^N(t)(\mathd x^N)=\mathcal{L}(X^N(t)|\mathcal{F}_{T}^{W})(\mathd x^N)$ is a random measure on torus $\mathbb{T}^{2N}.$ 
	Through Donsker-Varadhan variational formula i.e. Lemma \ref{lemma jw0}, we have 
	\begin{equation*}
		\begin{split}
			&\mathbb{E}\[ \| \mu_N (t) - v_t \|^2_{H^{- \alpha}}\mid \mathcal{F}_T^{W}\]\\
			\leqslant& \frac{1}{\kappa N}  \left( \nocomma H(F_N(t) | \bar{F}_N(t)) +
			\log\int_{\mathbb{T}^{2N}} \exp\({\kappa N\|\mu_N-v_t\|_{H^{-\alpha}}^2} \) \bar{F}_N(t) (\mathd x^N)
			\right) , 
		\end{split}
	\end{equation*}
	where  $\bar{F}_N(t)(\mathd x^N)=\mathcal{L}((\bar{X}^N(t))| \mathcal{F}^{W}_T)(\mathd x^N)=v_t^{\otimes
		N}(\mathd x^N)$ is a random measure on torus $\mathbb{T}^{2N}.$ 
	The rest of the proof follows by the same analysis  in \cite[Lemma 2.6-Lemma 2.9]{wang2023gaussian}.
\end{proof}
From Lemma \ref{thm:entropy},  we know that it holds almost surely that for all  \begin{align*}
	\frac{1}{2e^{C_0T}}\big(\nocomma H (F_N (t)| \bar{F}_N(t)) + 1\big)\exp\bigg\{-\int_{0}^{t}C_0\|v_s\|^2_{H^4}\mathd s\bigg\}\leq 1,
\end{align*} 
where $C_{0}$ is a positive deterministic constant depending on $\|v_0\|_{L^2(\mathbb{T}^2)}$ and $\underset{x\in \mathbb{T}^2 }{\inf}v_0.$
By  choosing $m=2e^{C_0T}+C_0>1,$ which depends on $\|v_0\|_{L^2(\mathbb{T}^2)},$$\underset{x\in \mathbb{T}^2 }{\inf}v_0$ and $T,$  we then obtain
\begin{align*}
	\frac{1}{m}\exp\bigg\{-\int_{0}^{t}m\|v_s\|^2_{H^4}\mathd s\bigg\}\big(\nocomma H (F_N (t)| \bar{F}_N(t)) + 1\big)\leq 1,\mathbb{P}-a.s..
\end{align*}
We then deduce  the following result by Lemma \ref{lemma uni mu}.
\begin{corollary}	\label{lemma uni mu2}
For each $t\in[0,T],$  define the weight term for $f\in L^2([0,T];H^4),$ $\mathcal{R}_t(f)=\frac{1}{m}\exp\bigg\{-\int_{0}^{t}m\|f_s\|^2_{H^4}\mathd s\bigg\},$ where the deterministic constant $m>1$   depends on $\|v_0\|_{L^2(\mathbb{T}^2)},$$\underset{x\in \mathbb{T}^2 }{\inf}v_0$ and $T.$ 	Then, for each $\alpha > 1$, there exist
	  constants
	$C_{\alpha}$ and $C$ such that  for all $N\in \mathbb{N},$
	\begin{enumerate}
		\item 	\begin{align*}
			\sup_{t \in [0, T]}\mathbb{E}\[\mathcal{R}_t(v) \|\mu_N (t) - v_t \|^2_{H^{- \alpha}}\]&\leq \frac{C_{\alpha}}{N},
		\end{align*}
	\item 
	\begin{align*}
	\sup_{t \in [0, T]}\mathbb{E}&\[\mathcal{R}_t(v)
	\| \nabla \cdummy [K \ast \mu_N (t) \mu_N (t) - v_t K
	\ast v_t] \|^2_{H^{- \alpha}}\]\leq \frac{C_{\alpha}}{N},		
\end{align*}
		\item  \begin{align*}
			\sup_{t \in [0, T]}\mathbb{E}&\[\mathcal{R}_t(v)\mid \langle \varphi K \ast (\mu_N (t) - v_t), \mu_N (t)
			- v_t \rangle\mid\]\leq \frac{C}{N}.
		\end{align*}		
	\end{enumerate}
\end{corollary}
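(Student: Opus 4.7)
The plan is to simply combine the three conditional estimates in Lemma \ref{lemma uni mu} with the deterministic pointwise bound on $\mathcal{R}_t(v)\big(H(F_N(t)\mid \bar F_N(t))+1\big)$ that was set up in the paragraph just above the statement. Since $\mathcal{R}_t(v)$ is a deterministic functional of $(v_s)_{s\le t}$, it is $\mathcal{F}_T^W$-measurable (recall that $v$ is the probabilistically strong solution in the sense of Definition \ref{def:mean-strong}, hence $\mathcal{F}_T^W$-adapted). This is the structural fact that lets the weight pass freely through the conditional expectations $\mathbb{E}[\,\cdot\mid\mathcal{F}_T^W]$ appearing in Lemma \ref{lemma uni mu}.

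First I would prove item (1). By the tower property and the $\mathcal{F}_T^W$-measurability of $\mathcal{R}_t(v)$,
\begin{equation*}
\mathbb{E}\bigl[\mathcal{R}_t(v)\,\|\mu_N(t)-v_t\|_{H^{-\alpha}}^2\bigr]
=\mathbb{E}\Bigl[\mathcal{R}_t(v)\,\mathbb{E}\bigl[\|\mu_N(t)-v_t\|_{H^{-\alpha}}^2\,\big|\,\mathcal{F}_T^W\bigr]\Bigr].
\end{equation*}
Applying Lemma \ref{lemma uni mu}(1) to the inner conditional expectation, the right-hand side is bounded by
\begin{equation*}
\frac{C_\alpha}{N}\,\mathbb{E}\Bigl[\mathcal{R}_t(v)\bigl(H(F_N(t)\mid \bar F_N(t))+1\bigr)\Bigr].
\end{equation*}
By the choice of $m>1$ made in the paragraph preceding the statement (namely $m=2e^{C_0T}+C_0$, combining Lemma \ref{thm:entropy} with the definition of $\mathcal{R}_t(v)$), we have the $\mathbb{P}$-almost sure bound
\begin{equation*}
\mathcal{R}_t(v)\bigl(H(F_N(t)\mid \bar F_N(t))+1\bigr)\le 1,
\end{equation*}
uniformly in $t\in[0,T]$. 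Plugging this in yields $\mathbb{E}[\mathcal{R}_t(v)\|\mu_N(t)-v_t\|_{H^{-\alpha}}^2]\le C_\alpha/N$, uniformly in $t$, which is (1).

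Items (2) and (3) are handled identically: in each case, the random variable inside the absolute value or norm is a function of the particle configuration $X^N(t)$ and of $v_t$, so its conditional expectation given $\mathcal{F}_T^W$ can be computed against $F_N(t)(dx^N)$ as in the proof of Lemma \ref{lemma uni mu}. Multiplying by the $\mathcal{F}_T^W$-measurable weight $\mathcal{R}_t(v)$, using the tower property, invoking Lemma \ref{lemma uni mu}(2)--(3) to bound the inner conditional expectations by $\frac{C}{N}(H(F_N(t)\mid \bar F_N(t))+1)$, and finally absorbing the entropy via the pointwise bound $\mathcal{R}_t(v)(H(F_N(t)\mid \bar F_N(t))+1)\le 1$, gives the required $C_\alpha/N$ and $C/N$ estimates. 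I do not expect any genuine obstacle here: all the hard work has already been done in Lemma \ref{lemma uni mu} (uniform-in-$\omega$ conditional estimates via Donsker--Varadhan plus \cite{wang2023gaussian}) and in Lemma \ref{thm:entropy} (the $\omega$-dependent entropy bound), and the sole role of this corollary is to deterministically trade the $\omega$-dependent factor $H(F_N(t)\mid \bar F_N(t))+1$ for a bounded multiplicative weight $\mathcal{R}_t(v)$ that is useful later when running the localization argument for tightness.
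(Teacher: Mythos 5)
Your argument is correct and is exactly what the paper intends: the paragraph preceding the corollary establishes the $\mathbb{P}$-a.s.\ bound $\mathcal{R}_t(v)\bigl(H(F_N(t)\mid \bar F_N(t))+1\bigr)\le 1$, and combining it with the conditional estimates of Lemma \ref{lemma uni mu} via the tower property (legitimate because $\mathcal{R}_t(v)$ is $\mathcal{F}_T^W$-measurable, as $v$ is the strong solution adapted to $\mathcal{F}^W$) gives all three items. The paper compresses this into ``we then deduce the following result by Lemma \ref{lemma uni mu},'' so you have simply spelled out the intended one-line reduction.
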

\begin{remark}
	Since the initial values are i.i.d. random variables,  the classical central limit theorem allows us to  infer that   for each $\varphi\in C^{\infty}(\mT^2)$, we have
	\begin{align*}
		\left\langle \eta^N_0,\varphi\right\rangle =\frac{1}{\sqrt{N}}\sum_{i=1}^N\[\varphi(X_i(0))-\left\langle \varphi,\mu \right\rangle \]\xrightarrow{N\rightarrow \infty} \mathcal{N}\(0,\left\langle \varphi^2,\mu\right\rangle-\left\langle \varphi,\mu\right\rangle^2  \),
	\end{align*}
in the sense of distribution, where $\mathcal{N}(0,a)$ denotes the centered Gaussian distribution on $\mathbb{R}$ with variance $a$.  Using  Lemma \ref{lemma uni mu} which gives the tightness of  laws of $\{\eta^N(0)\}$  in $H^{-\alpha},$ we then conclude that for every $\alpha>1,$ $\eta^N_0$ converges in distribution to some $\eta_0$  in $H^{-\alpha}.$ 
\end{remark}

\subsection{Tightness}\label{sec:compactness}
In this section, we will  prove the  tightness of laws of $\{\mathcal{L}(v,\eta^N,\mathcal{M}^N,W),N\in \mathbb{N}\}$ on the Polish space  
	$\mathcal{H}=\mathcal{V}\times	\mathcal{X}\times\mathcal{Y}\times\mathcal{W},$ which is given by \eqref{def:H}.
We begin by introducing the following pathwise representation of the additive noise part in the decomposition of \eqref{spde-etaN}, given by
\begin{align*}
	\frac{\sqrt{2}}{\sqrt{N}}  \sum_{i = 1}^N \int^t_0 \nabla \varphi (X_{i}) \cdot
	\mathd B_s^i ,
\end{align*}
for each $\varphi \in C^{\infty} (\mathbb{T}^2),$
along with its corresponding estimate.  The proof is provided in \cite{wang2023gaussian}.
\begin{lemma}\label{lemma m}
	For each $N$, there exists a progressively measurable process
	$\mathcal{M}^N$ with values in $H^{- \alpha}$, for every $\alpha > 2$, such that
	\begin{enumerate}
		\item   For all $t
		\in [0, T]$ and $\varphi \in C^{\infty}(\mathbb{T}^d),$ it holds $\mathbb{P}$-a.s., 
		\begin{align*}
		\left\langle \mathcal{M}_t^N, \varphi \right\rangle	 = \frac{\sqrt{2}}{\sqrt{N}}  \sum_{i = 1}^N \int^t_0 \nabla \varphi (X_{i}) \cdot
			\mathd B_s^i ,
		\end{align*}
	
		\item For every $\alpha > 2$,  $\theta'\in (0,\frac{1}{2}),$ there exists  constants $C_{T,\alpha,\theta'}$ and $ C_{T,\alpha}$ such that   \begin{align*}
			&\sup_N \mathbb{E} (\| \mathcal{M}^N
			\|_{C^{\theta'} ([0, T], H^{- \alpha})}^2) \leq C_{T,\alpha,\theta'},
			\\&  \sup_{N}\mathbb{E}\[\sup_{t \in [0, T]}\| \mathcal{M}^N_t
			\|^2_{H^{- \alpha}}\]\leq C_{T,\alpha}.
		\end{align*} 
	\end{enumerate}
	 
	 Furthermore, for every $\alpha > 2,$ the sequence  $(\mathcal{M}^N)_{N \in \mathbb{N}}$ is tight in the space $C([0,T],H^{-\alpha}).$
\end{lemma}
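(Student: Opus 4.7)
The plan is to construct $\mathcal{M}^N$ componentwise along the Fourier basis of $L^2(\mathbb{T}^2)$, verify that it takes values in $H^{-\alpha}$ for every $\alpha>2$ via It\^o's isometry, upgrade to H\"older-in-time regularity through the Burkholder-Davis-Gundy (BDG) inequality combined with Kolmogorov's continuity criterion, and finally obtain tightness from the compact embedding of Lemma \ref{lemma embedding} together with Arzel\`a-Ascoli.

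For the construction, I would fix the orthonormal basis $e_k(x)=e^{\sqrt{-1}\, k\cdot x}$, $k\in\mathbb{Z}^2$, and define for each $k$ the continuous (complex-valued) martingale
\[
M^{N,k}_t \assign \frac{\sqrt{2}}{\sqrt{N}}\sum_{i=1}^N \int_0^t \nabla e_k(X^N_i(s))\cdot \mathd B^i_s,
\]
then let $\mathcal{M}^N_t$ be the distribution whose $k$-th Fourier coefficient is $M^{N,k}_t$. Property (1) follows by writing $\varphi=\sum_k \langle\varphi,e_k\rangle e_k$ and using the linearity of the stochastic integral (justified by dominated convergence via the second-moment estimate below).

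Next, I would prove the estimates in (2). Since $|\nabla e_k|\equiv|k|$, It\^o's isometry yields $\mathbb{E}|M^{N,k}_t|^2 \leq 2T|k|^2$ uniformly in $N$, hence
\[
\mathbb{E}\|\mathcal{M}^N_t\|_{H^{-\alpha}}^2 = \sum_{k}(1+|k|^2)^{-\alpha}\mathbb{E}|M^{N,k}_t|^2 \lesssim T\sum_{k}(1+|k|^2)^{1-\alpha},
\]
which is finite for $\alpha>2$ in dimension two; Doob's maximal inequality applied coordinatewise then upgrades this to the supremum bound. For the H\"older estimate, fix $\theta'\in(0,1/2)$ and pick $\alpha_1\in(2,\alpha)$ together with $p\in\mathbb{N}$ large enough that $p(1-2\theta')>1$ and $p<\alpha_1-1$ (enlarging $\alpha$ if necessary and exploiting the continuous embedding $H^{-\alpha_1}\hookrightarrow H^{-\alpha}$). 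Applying Jensen's inequality in the form $(\sum a_k x_k)^p\leq (\sum a_k)^{p-1}\sum a_k x_k^p$ with $a_k=(1+|k|^2)^{-\alpha_1}$ and $x_k=|M^{N,k}_t-M^{N,k}_s|^2$, together with BDG applied to each $M^{N,k}$, gives
\[
\mathbb{E}\|\mathcal{M}^N_t-\mathcal{M}^N_s\|_{H^{-\alpha_1}}^{2p} \lesssim |t-s|^p \sum_k(1+|k|^2)^{p-\alpha_1}\lesssim |t-s|^p,
\]
and Kolmogorov's continuity theorem in $H^{-\alpha_1}$ delivers the claimed uniform-in-$N$ bound on $\mathbb{E}\|\mathcal{M}^N\|_{C^{\theta'}([0,T];H^{-\alpha})}^2$.

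For tightness, Lemma \ref{lemma embedding} provides a compact embedding $H^{-\alpha_1}\hookrightarrow H^{-\alpha}$ whenever $\alpha>\alpha_1$, so by Arzel\`a-Ascoli the closed balls of $C^{\theta'}([0,T];H^{-\alpha_1})$ are relatively compact in $C([0,T];H^{-\alpha})$; Markov's inequality applied to the H\"older-norm bound shows that the laws of $(\mathcal{M}^N)_N$ concentrate on these sets with probability arbitrarily close to one, whence tightness. The main technical obstacle is the interplay between the $|k|^2$ factor coming from $|\nabla e_k|^2$ and the weight $(1+|k|^2)^{-\alpha}$: obtaining H\"older exponents close to $1/2$ requires $p$ large in Kolmogorov, which in turn forces a sufficiently large gap between $\alpha_1$ and $2$; beyond this bookkeeping the argument is routine, and the details appear in \cite{wang2023gaussian}.
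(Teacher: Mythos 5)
Your construction of $\mathcal{M}^N$ via the Fourier coefficients $M^{N,k}$ is sound, and the $L^2$-bound $\mathbb{E}\|\mathcal{M}^N_t\|_{H^{-\alpha}}^2\lesssim T\sum_k(1+|k|^2)^{1-\alpha}$ (finite iff $\alpha>2$ in dimension two) together with Doob for the supremum estimate is correct. However, your route to the H\"older estimate has a genuine gap. The Jensen-plus-scalar-BDG step produces the constraint $p<\alpha_1-1$ (needed so that $\sum_k(1+|k|^2)^{p-\alpha_1}<\infty$), while Kolmogorov requires $p(1-2\theta')>1$, i.e.\ $p$ arbitrarily large as $\theta'\uparrow 1/2$. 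Since the statement must hold for \emph{every} $\alpha>2$ and \emph{every} $\theta'\in(0,\tfrac12)$, taking $\alpha$ slightly above $2$ forces $\alpha_1<\alpha$ close to $2$, hence $p<\alpha_1-1<1$, and no admissible $p$ exists. Your parenthetical ``enlarging $\alpha$ if necessary'' cannot repair this: the embedding $H^{-\alpha_1}\hookrightarrow H^{-\alpha}$ requires $\alpha_1\le\alpha$, so enlarging $\alpha$ only proves the estimate in a \emph{weaker} space, not in the $H^{-\alpha}$ you were given. Even for the tightness part alone, if $\alpha\in(2,3)$ you can only take $p\in(1,\alpha_1-1)\subset(1,2)$, which barely scrapes a positive H\"older exponent and certainly not all of $(0,\tfrac12)$.

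The fix is to apply the Burkholder--Davis--Gundy inequality for Hilbert-space-valued continuous martingales rather than componentwise. Since $\mathcal{M}^N$ is a continuous $H^{-\alpha}$-valued martingale with (real-valued, deterministic) quadratic variation
\[
\langle\mathcal{M}^N\rangle_t \;=\; 2t\sum_{k\in\mathbb{Z}^2}(1+|k|^2)^{-\alpha}|k|^2 \;=\; C_\alpha\, t \;<\;\infty\qquad\text{for all }\alpha>2,
\]
one gets $\mathbb{E}\|\mathcal{M}^N_t-\mathcal{M}^N_s\|^{2p}_{H^{-\alpha}}\le C_p\,\mathbb{E}\big[\langle\mathcal{M}^N\rangle_t-\langle\mathcal{M}^N\rangle_s\big]^p = C_p C_\alpha^p|t-s|^p$ for \emph{arbitrary} $p$, with no interplay between $p$ and $\alpha$; Kolmogorov then yields H\"older regularity for every $\theta'<\tfrac12$ in $H^{-\alpha}$ itself. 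With that replacement your tightness argument (compact embedding $H^{-\alpha_1}\hookrightarrow H^{-\alpha}$ for $2<\alpha_1<\alpha$, Arzel\`a--Ascoli, and Markov) goes through. Note that the paper itself does not reprove this lemma but cites \cite{wang2023gaussian}, so there is no in-text proof to compare against; the above is the standard repair.
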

Before proceeding, we introduce the following stopping times  $\{\tau_R, R>0\}.$
\begin{align*}
	\tau_R\assign \inf\bigg \{0<t\leq T: \mathcal{R}_t^{-1}(v)= m\exp\bigg\{\int_{0}^{t}m\|v_s\|^2_{H^4}\mathd s\bigg\}> R\bigg\},
\end{align*} (with the convention $\inf \emptyset=T$)
and define \begin{align*}
	\eta_{R}^N(t)\assign\eta^N(t\wedge
	\tau_{R}),\quad t\in [0,T].
\end{align*} 
Then, we obtain the uniform estmates on $\eta^N$ before the stopping time $\tau_{R}$ through Corollary \ref{lemma uni mu2}.

\begin{lemma}\label{lemma: eta unit}
	 For every $\alpha > 3 $ and $R>0,$   there exists  a constant $C_{\alpha,\sigma,T,R}$ such that
	\begin{align*}
		\sup_{N}\mathbb{E}\sup_{t \in [0, T]}\|\eta_{R}^N(t)\|^2_{H^{-\alpha}}\leq C_{\alpha,\sigma,T,R}.
	\end{align*}
\end{lemma}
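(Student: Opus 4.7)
The plan is to derive a closed energy inequality for $\|\eta^N_{t\wedge\tau_R}\|^2_{H^{-\alpha}}$ by applying It\^o's formula coefficient-wise in the Fourier basis to the SPDE \eqref{spde-etaN}, then to combine this with Corollary~\ref{lemma uni mu2} for the interaction term, Lemma~\ref{lemma m} for the additive martingale, and Gronwall's inequality. The stopping time $\tau_R$ is designed precisely so that $\mathcal{R}_s(v)^{-1}\mathbf{1}_{\{s\le\tau_R\}}\le R$, which converts every weighted estimate in Corollary~\ref{lemma uni mu2} into a genuine (unweighted) $L^2$-in-time bound with a constant depending on $R$.

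First, I expand $\|\eta^N_t\|^2_{H^{-\alpha}}=\sum_k (1+|k|^2)^{-\alpha}|\langle e_k,\eta^N_t\rangle|^2$, apply It\^o to each $|\langle e_k,\eta^N_t\rangle|^2$ using \eqref{spde-etaN}, and sum. Schematically this yields
\begin{align*}
\|\eta^N_{t}\|^2_{H^{-\alpha}} &= \|\eta^N_0\|^2_{H^{-\alpha}}
+ 2\int_0^t\langle\eta^N_s,\Delta\eta^N_s\rangle_{H^{-\alpha}}\,ds
+ \int_0^t\Bigl(2\langle\eta^N_s,\tfrac12\sigma\!\cdot\!\nabla(\sigma\!\cdot\!\nabla\eta^N_s)\rangle_{H^{-\alpha}}+\|\sigma\!\cdot\!\nabla\eta^N_s\|^2_{H^{-\alpha}}\Bigr)ds\\
&\quad + 2\int_0^t\langle\eta^N_s,\,{-}\sqrt{N}\nabla\!\cdot\!(K*\mu_N\mu_N-v_sK*v_s)\rangle_{H^{-\alpha}}ds\\
&\quad + 2\int_0^t\langle\eta^N_s,d\mathcal{M}^N_s\rangle_{H^{-\alpha}}+[\mathcal{M}^N]^{H^{-\alpha}}_t
+2\int_0^t\langle\eta^N_s,\sigma\!\cdot\!\nabla\eta^N_s\rangle_{H^{-\alpha}}dW_s.
\end{align*}
The Laplacian term is non-positive modulo $\|\eta^N\|^2_{H^{-\alpha}}$ and is dropped. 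Using that $\sigma$ is smooth and divergence-free, a standard commutator/integration-by-parts argument gives the crucial cancellation
\[
2\langle\eta^N,\tfrac12\sigma\!\cdot\!\nabla(\sigma\!\cdot\!\nabla\eta^N)\rangle_{H^{-\alpha}}+\|\sigma\!\cdot\!\nabla\eta^N\|^2_{H^{-\alpha}}\;\lesssim_{\sigma,\alpha}\;\|\eta^N\|^2_{H^{-\alpha}},
\]
valid for the smooth range $\alpha>3$ (where the symbol calculus for $(1-\Delta)^{-\alpha}$ and the paraproduct with $\sigma\!\cdot\!\nabla$ work cleanly).

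Next I control the interaction and martingale terms. By Cauchy--Schwarz in $H^{-\alpha}$ and Corollary~\ref{lemma uni mu2}(2), together with $\mathcal{R}_s(v)^{-1}\mathbf 1_{\{s\le\tau_R\}}\le R$,
\[
\mathbb E\int_0^{T\wedge\tau_R}\|\sqrt N\nabla\!\cdot\!(K*\mu_N\mu_N-v_sK*v_s)\|^2_{H^{-\alpha}}\,ds\le RT\,C_\alpha,
\]
so the interaction drift contributes at most $\int_0^t\mathbb E\sup_{u\le s\wedge\tau_R}\|\eta^N_u\|^2_{H^{-\alpha}}ds + RTC_\alpha$. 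For the $\mathcal{M}^N$-martingale I apply Burkholder--Davis--Gundy, followed by $ab\le\tfrac12 a^2+\tfrac12 b^2$:
\[
\mathbb E\sup_{t\le T\wedge\tau_R}\Bigl|\!\int_0^t\!\!2\langle\eta^N_s,d\mathcal{M}^N_s\rangle_{H^{-\alpha}}\Bigr|
\le \tfrac12\mathbb E\sup_{t\le T\wedge\tau_R}\!\|\eta^N_t\|^2_{H^{-\alpha}}+C\,\mathbb E\sup_{t\le T}\|\mathcal M^N_t\|^2_{H^{-\alpha}},
\]
and Lemma~\ref{lemma m}(2) bounds the second term uniformly in $N$ (for $\alpha>3>2$). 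The $dW$-martingale is treated identically, using the commutator bound to estimate its quadratic variation by $\int\|\eta^N_s\|^4_{H^{-\alpha}}ds$, then splitting via BDG. The initial datum $\mathbb E\|\eta^N_0\|^2_{H^{-\alpha}}$ is uniformly bounded by direct variance computation $\sum_k(1+|k|^2)^{-\alpha}<\infty$ in dimension 2.

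Absorbing the $\tfrac12\mathbb E\sup$-terms on the right-hand side and applying Gronwall's lemma to
\[
\mathbb E\sup_{s\le t\wedge\tau_R}\|\eta^N_s\|^2_{H^{-\alpha}}\;\le\;C_{\alpha,\sigma,T,R}\Bigl(1+\int_0^t\mathbb E\sup_{u\le s\wedge\tau_R}\|\eta^N_u\|^2_{H^{-\alpha}}ds\Bigr)
\]
yields the desired constant $C_{\alpha,\sigma,T,R}$, uniform in $N$. The main obstacle I expect is the commutator cancellation for the $\sigma$-terms in the negative Sobolev norm: both the It\^o correction from $\tfrac12\sigma\!\cdot\!\nabla(\sigma\!\cdot\!\nabla\eta^N)$ and the quadratic variation $\|\sigma\!\cdot\!\nabla\eta^N\|^2_{H^{-\alpha}}$ individually lose derivatives on $\eta^N$, and only their sum is controlled by $\|\eta^N\|^2_{H^{-\alpha}}$; this is where the threshold $\alpha>3$ enters, since the paraproduct/commutator estimates with a smooth divergence-free $\sigma$ require $\alpha$ strictly above the scaling index needed to compensate the two derivatives appearing inside the noise.
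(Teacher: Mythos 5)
Your proposal is a genuine energy-method proof (Itô in $H^{-\alpha}$ with Stratonovich/Itô cancellation), while the paper does something much cruder and cheaper. The paper never applies Itô's formula to $\|\eta^N\|^2_{H^{-\alpha}}$: it simply writes the stopped process $\eta^N_{t\wedge\tau_R}-\eta^N_0$ as a sum of five integral terms, takes $H^{-\alpha}$ norms, squares, uses $(\sum a_i)^2\lesssim\sum a_i^2$ and Cauchy--Schwarz in time (turning each drift term into $\int_0^T R\,\mathcal R_s(v)\|\cdot\|^2_{H^{-\alpha}}\,\mathd s$ via the stopping time), and then invokes Corollary~\ref{lemma uni mu2} for the drift pieces, Lemma~\ref{lemma m} for $\mathcal M^N$, and BDG for the $\mathd W$ stochastic integral. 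No Laplacian sign, no commutator cancellation, no Gronwall. The price of that crudeness is precisely the $\alpha>3$ threshold: the Laplacian term $\|\Delta\eta^N_s\|_{H^{-\alpha}}\lesssim\|\eta^N_s\|_{H^{-\alpha+2}}$ and the transport correction $\|\sigma\cdot\nabla(\sigma\cdot\nabla\eta^N_s)\|_{H^{-\alpha}}\lesssim\|\eta^N_s\|_{H^{-\alpha+2}}$ each lose two derivatives, and Corollary~\ref{lemma uni mu2} supplies moments of $\mathcal R_t(v)\|\eta^N_t\|^2_{H^{-\beta}}$ only for $\beta>1$, hence $\alpha-2>1$.

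This is where your explanation of the $\alpha>3$ restriction goes wrong: you attribute it to the commutator bound $2\langle\eta,\tfrac12\sigma\cdot\nabla(\sigma\cdot\nabla\eta)\rangle_{H^{-\alpha}}+\|\sigma\cdot\nabla\eta\|^2_{H^{-\alpha}}\lesssim\|\eta\|^2_{H^{-\alpha}}$. That commutator estimate (the paper uses essentially this in the uniqueness proof, Theorem~\ref{thm:uni}, via \cite[Lemma~2.3]{krylov2015filtering}) holds for smooth $\sigma$ at any Sobolev index; it does not create the $\alpha>3$ threshold. In fact, your more refined argument, pushed through cleanly, would not need $\alpha>3$ for the $\sigma$-terms or the Laplacian at all — the threshold in the statement reflects the paper's rough method, not an obstruction intrinsic to the lemma.

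Two technical slips in your outline. First, the BDG bound for $\int_0^t 2\langle\eta^N_s,\mathd\mathcal M^N_s\rangle_{H^{-\alpha}}$ is not controlled by $\sup_t\|\mathcal M^N_t\|^2_{H^{-\alpha}}$; BDG controls it by the quadratic variation, which after computing $[\widehat{\mathcal M^N_k},\overline{\widehat{\mathcal M^N_{k'}}}]$ in terms of $\mu_N$ gives $\lesssim\int_0^t\|\eta^N_s\|^2_{H^{-\alpha}}\mathd s$ (for $\alpha>2$), and one then splits by Young. The conclusion survives, but not by the route you wrote. Second, the Gronwall step requires a priori finiteness of $\mathbb E\sup_{s\le t}\|\eta^N_{s\wedge\tau_R}\|^2_{H^{-\alpha}}$, which is exactly what you are trying to prove; a further localization in $\|\eta^N\|$ is needed to make it rigorous (you implicitly have this in the paper's second stopping time $\tau^N_{\alpha,M,R}$, but that appears only in the later Lemma~\ref{lemma:Holdermu}, not here). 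The paper's direct argument avoids Gronwall entirely, so this issue never arises for them. In summary: your approach is a correct idea but is heavier, has two imprecise steps, and misidentifies the source of $\alpha>3$; the paper trades sharpness in $\alpha$ for a five-line argument.
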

\begin{proof}
		Notice that  for each $N\in \mathbb{N},$ it holds $\mathbb{P}$-a.s. that for every $t\in [0,T],$
		\begin{align*}
			\eta^N_{t\wedge\tau_R}-\eta^N_0&=-\sqrt{N}\int_{0}^{t\wedge \tau_{R}}\nabla\cdot (\mu_N(s)K*\mu_N(s))  \mathd s+\sqrt{N}\int_{0}^{t\wedge \tau_{R}}\nabla \cdot  (v_sK*v_s )  \mathd s
			\\&+\int_0^{t\wedge \tau_{R}} \Delta\eta_s^N\mathd s
			+\frac{1}{2}\int_0^{t\wedge \tau_{R}} \sigma\cdot\nabla\(\sigma\cdot\nabla\eta_s^N\) \mathd s
			+\mathcal{M}^N_{t\wedge \tau_{R}}-\int_{0}^{t\wedge \tau_{R}} \sigma\cdot \nabla\eta_s^N\mathd W_s.
		\end{align*}
	Since $\mathcal{R}^{-1}_t(v)\leq R$ before the stopping time $\tau_R,$
		we then have 
		\begin{align*}
			\sup_{t \in [0, T]}\big\|\eta_{t\wedge \tau_{R}}^N\big\|^2_{H^{-\alpha}}\lesssim \|\eta_0^N\|^2_{H^{-\alpha}}+\sum_{i=1}^{5}J_i,
		\end{align*}
		where \begin{align*}
			J_1&\assign\int_0^T R\mathcal{R}_s(v) \|\Delta\eta_s^N\|^2_{H^{-\alpha}} \mathd s,
			\\J_2&\assign\int_0^TR\mathcal{R}_s(v) \|\sigma\cdot\nabla\(\sigma\cdot\nabla\eta_s^N\)\|^2_{H^{-\alpha}} \mathd s,
			\\J_3&\assign\int_{0}^{T}R\mathcal{R}_s(v)N\|\nabla \(\mu_N(s)K*\mu_N(s)-v_sK*v_s  \)\|^2_{H^{-\alpha}}\mathd s,
			\\J_4&\assign\sup_{t \in [0, T]}\|\mathcal{M}_t^N\|^2_{H^{-\alpha}},
			\\J_5&\assign\sup_{t \in [0, T]}\|\int_{0}^{t\wedge \tau_{R}}\sigma\cdot \nabla\eta_s^N \mathd W_s\|^2_{H^{-\alpha}}.
		\end{align*}
	
	Observe that Lemma \ref{lemma triebel} implies $\|\sigma\cdot\nabla\(\sigma\cdot\nabla\eta_s^N\)\|^2_{H^{-\alpha}}\leq C_\sigma\|\eta_s^N\|^2_{H^{-\alpha+2}}.$
		By applying Corollary \ref{lemma uni mu2}, we then have 
		\begin{align*}
			&\sup_{N}\mathbb{E}[J_2]\leq C_{\sigma}RT\sup_{t \in [0, T]}\mathbb{E}\[ \mathcal{R}_t(v) \|\eta_t^N\|^2_{H^{-\alpha+2}} \]\leq RTC_{\alpha,\sigma}.
		\end{align*}
	Similarly, by Corollary \ref{lemma uni mu2}, we have 
	\begin{align*}
		&\sup_{N}\mathbb{E}[J_1+J_3]\leq RTC_{\alpha,\sigma}.
	\end{align*}
	Recall that we have already established  $\sup_{N}\mathbb{E}[J_4]\leq C_{T,\alpha}$ in Lemma \ref{lemma m}.
		For $J_5,$ applying  Burkholder-Davis-Gundy's inequality,  we have 
		\begin{align*}
			\mathbb{E}[J_5]&\leq\mathbb{E}\int_{0}^{T\wedge \tau_{R}}\|\sigma\cdot \nabla\eta_s^N \|^2_{H^{-\alpha}}\mathd s\\&\leq C_{\sigma}RT\sup_{t \in [0, T]}\mathbb{E}\[\mathcal{R}_t(v) \|\eta_t^N\|^2_{H^{-\alpha+1}} \]\leq C_{\alpha,\sigma,T,R}.
		\end{align*}
		Summerizing the estimates above, the proof is then completed.
	\end{proof}
For $\alpha>3,$ 
	we  define the following stopping times  $\{\tau_{ \alpha,M,R}, M,R>0\},$ $$\tau^N_{ \alpha,M,R}\assign \inf \bigg\{0<t\leq T: \sup_{s\in [0, t]}\|\eta_s^{N}\|^2_{H^{-\alpha}}> M\bigg\}\wedge\tau_R,$$
(with the convention $\inf \emptyset=T$)	and the associated stopped process \begin{align*}
		\eta_{ \alpha,M,R}^N(t)\assign\eta^N(t\wedge
		\tau^N_{ \alpha,M,R}),\quad t\in [0,T].
	\end{align*} 
	Based on  Lemma \ref{lemma: eta unit}, we derive a uniform estimate for the H\"older semi-norm  of $\eta^N.$

	\begin{lemma}\label{lemma:Holdermu}
		\label{lemma equi mu} For every
		$\alpha > 3$ and $ \theta\in (0,\frac{1}{2})$, there exists a constant $C_{\alpha,\sigma,\theta,R,M,T}$ such that for every $R>0$ and $M>0,$
		\begin{align}
			\sup_{N}\mathbb{E}\[\|\eta^N_{ \alpha,M,R}\|_{C^{\theta} ([0, T], H^{- \alpha})}\]\leq C_{\alpha,\sigma,\theta,R,M,T},
		\end{align}
		where
		\begin{align*}
			\| f_t \|_{C^{\theta} ([0, T], H^{- \alpha})}\assign \sup_{ 0 \leq s <  t \leq T } \frac{\| f_t - f_s \|_{H^{-
						\alpha}}}{(t - s)^{\theta}}.
		\end{align*}
	\end{lemma}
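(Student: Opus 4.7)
\textbf{Proof proposal for Lemma \ref{lemma:Holdermu}.}

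The plan is to use the SPDE representation \eqref{spde-etaN} together with the definition of $\mathcal{M}^N$ (Lemma \ref{lemma m}) to write, for any $0\leq s<t\leq T$,
\begin{align*}
\eta^N_{t\wedge\tau^N_{\alpha,M,R}}-\eta^N_{s\wedge\tau^N_{\alpha,M,R}}
&=\int_{s\wedge\tau}^{t\wedge\tau}\Delta\eta^N_r\mathd r
+\tfrac{1}{2}\int_{s\wedge\tau}^{t\wedge\tau}\sigma\cdot\nabla(\sigma\cdot\nabla\eta^N_r)\mathd r
-\int_{s\wedge\tau}^{t\wedge\tau}\mathcal{K}^N_r\mathd r\\
&\quad+\bigl(\mathcal{M}^N_{t\wedge\tau}-\mathcal{M}^N_{s\wedge\tau}\bigr)
-\int_{s\wedge\tau}^{t\wedge\tau}\sigma\cdot\nabla\eta^N_r\mathd W_r,
\end{align*}
where $\tau=\tau^N_{\alpha,M,R}$, and bound each of the five pieces in $H^{-\alpha}$ separately. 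The three drift pieces are handled by the Cauchy--Schwarz inequality in time together with Corollary \ref{lemma uni mu2}: since $\mathcal{R}_r^{-1}(v)\leq R$ on $[0,\tau_R]$, the corollary (applied with index $\alpha-2>1$) supplies uniform $L^2(\mathbb{P}\otimes\mathd r)$-bounds on $\|\eta^N_r\|_{H^{-\alpha+2}}$ and $\sqrt{N}\,\|\nabla\cdot(\mu_NK\ast\mu_N-v_rK\ast v_r)\|_{H^{-\alpha}}$ (and Lemma \ref{lemma triebel} reduces the Stratonovich-correction estimate to the same quantity). Cauchy--Schwarz gives each drift piece a bound of order $(t-s)^{1/2}$, so dividing by $(t-s)^{\theta}$ leaves $T^{1/2-\theta}$ times a random constant with finite $\mathbb{E}$, which accounts for their contribution to the Hölder seminorm.

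The piece $\mathcal{M}^N_{t\wedge\tau}-\mathcal{M}^N_{s\wedge\tau}$ is controlled directly by Lemma \ref{lemma m}(2), which already gives $\sup_N\mathbb{E}\|\mathcal{M}^N\|_{C^{\theta}([0,T],H^{-\alpha})}^{2}\leq C_{T,\alpha,\theta}$ for any $\theta<1/2$. The transport-noise piece is the delicate one. Setting $Y_r:=\sigma\cdot\nabla\eta^N_r$, Lemma \ref{lemma triebel} gives $\|Y_r\|_{H^{-\alpha}}\lesssim_{\sigma}\|\eta^N_r\|_{H^{-\alpha+1}}$, and the Burkholder--Davis--Gundy inequality combined with Jensen yields
\[
\mathbb{E}\Big\|\int_{s\wedge\tau}^{t\wedge\tau}Y_r\,\mathd W_r\Big\|^{p}_{H^{-\alpha}}
\leq C_{p,\sigma}\,(t-s)^{p/2}\,\sup_{r\leq T}\mathbb{E}\bigl[\|\eta^N_{r\wedge\tau}\|^{p}_{H^{-\alpha+1}}\bigr]
\]
for any even integer $p\geq 2$. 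To control the last factor, we use the interpolation
\[
\|\eta^N_r\|_{H^{-\alpha+1}}\leq C\,\|\eta^N_r\|_{H^{-\alpha}}^{1-\lambda}\,\|\eta^N_r\|_{H^{-\alpha+1+\epsilon}}^{\lambda},
\qquad \lambda=\tfrac{1}{1+\epsilon},
\]
for some $\epsilon\in(0,\alpha-2)$ chosen below. Before $\tau^N_{\alpha,M,R}$ the first factor is pathwise dominated by $M^{1/2}$, while Corollary \ref{lemma uni mu2} (with index $\alpha-1-\epsilon>1$) bounds the second factor in $L^{2}(\mathbb{P})$ uniformly in $N$, so Jensen gives a uniform moment bound provided $p\lambda\leq 2$, i.e.\ $p\leq 2(1+\epsilon)$. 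Choosing $p$ large enough that $1/2-1/p>\theta$ and applying the Kolmogorov--Chentsov criterion yields a version with uniformly bounded $\mathbb{E}\|\cdot\|_{C^{\theta}([0,T],H^{-\alpha})}$.

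Summing the five contributions produces the desired bound $\sup_N\mathbb{E}\|\eta^N_{\alpha,M,R}\|_{C^{\theta}([0,T],H^{-\alpha})}\leq C_{\alpha,\sigma,\theta,R,M,T}$. The main obstacle is the transport-noise piece: the entropy machinery only yields $L^2(\mathbb{P})$-moments of $\|\eta^N_r\|_{H^{-\beta}}$ for $\beta>1$, while Kolmogorov--Chentsov with Hölder exponent arbitrarily close to $1/2$ requires high $L^p$-moments. The double stopping time $\tau^N_{\alpha,M,R}=\tau^N_{\alpha,M}\wedge\tau_R$ is what resolves this: $\tau^N_{\alpha,M}$ upgrades the weighted $L^2(\mathbb{P})$-bound of Corollary \ref{lemma uni mu2} to an \emph{all}-moment pathwise bound on $\|\eta^N\|_{H^{-\alpha}}$, and the interpolation above transfers this, together with the $L^{2}(\mathbb{P})$-control of a slightly weaker Sobolev norm up to $\tau_R$, to effective high-moment control of $\|\eta^N\|_{H^{-\alpha+1}}$. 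All subsequent constants unavoidably depend on $\alpha,\sigma,\theta,R,M,T$.
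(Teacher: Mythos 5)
Your decomposition of $\eta^N_{t\wedge\tau}-\eta^N_{s\wedge\tau}$ into five pieces is exactly the paper's, and your treatment of the three drift pieces (Laplacian, interaction $\mathcal{K}^N$, Stratonovich correction) and of the $\mathcal{M}^N$ increment is the same as theirs: Cauchy--Schwarz in time plus Corollary~\ref{lemma uni mu2} for the drifts, Lemma~\ref{lemma m} for $\mathcal{M}^N$. Where you diverge is the transport-noise piece, and this divergence is worth examining carefully.

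The paper's estimate for $J^5_{s,t}$ applies BDG and then bounds the integrand by
$\|\sigma\cdot\nabla\eta^N_r\|^2_{H^{-\alpha}}\leq C_\sigma\sup_{s'\leq r}\|\eta^N_{s'}\|^2_{H^{-\alpha}}\leq C_\sigma M$
pathwise before $\tau^N_{\alpha,M,R}$, so that $\mathbb{E}|J^5_{s,t}|^{2\theta'}\lesssim|t-s|^{\theta'}$ for \emph{every} $\theta'>1$, and Kolmogorov--Chentsov then yields every $\theta<1/2$. You observed, correctly, that a first-order transport operator loses a derivative: Lemma~\ref{lemma triebel} actually gives $\|\sigma\cdot\nabla f\|_{H^{-\alpha}}\lesssim_\sigma\|f\|_{H^{-\alpha+1}}$, and the stopping time $\tau^N_{\alpha,M,R}$ controls $\|\eta^N_r\|_{H^{-\alpha}}$, not $\|\eta^N_r\|_{H^{-\alpha+1}}$, pathwise. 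Your interpolation is the natural repair, but note carefully the three constraints it generates: $p\lambda\leq 2$ so that the entropy estimate closes (i.e.\ $p\leq 2(1+\epsilon)$), $1/2-1/p>\theta$ so that Kolmogorov gives exponent $\theta$ (i.e.\ $p>2/(1-2\theta)$), and $\epsilon<\alpha-2$ so that Corollary~\ref{lemma uni mu2} applies at index $\alpha-1-\epsilon>1$. These are simultaneously satisfiable only when $2\theta/(1-2\theta)<\alpha-2$, i.e.\ $\theta<(\alpha-2)/(2(\alpha-1))$, which is strictly less than $1/2$ for every finite $\alpha$ and drops to $1/4$ as $\alpha\downarrow 3$. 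So your argument does not in fact prove the lemma for the full claimed range $\theta\in(0,1/2)$; you never close the loop on whether a $p$ satisfying both inequalities exists, and for $\theta$ near $1/2$ it does not.

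For what it is worth, the constraint you ran into points at a real subtlety that the paper's own one-line bound for $J^5$ glosses over. A cleaner way to make this piece work for all $\theta<1/2$ at the cost of a stronger hypothesis is to define the stopping time one index lower, say by $\sup_{s\leq t}\|\eta^N_s\|^2_{H^{-\alpha+1}}>M$; then $\|\sigma\cdot\nabla\eta^N_r\|_{H^{-\alpha}}\lesssim_\sigma\|\eta^N_r\|_{H^{-\alpha+1}}\leq C_\sigma M^{1/2}$ is pathwise bounded before the stopping time, every moment is available, and Kolmogorov delivers the full range. This pushes the requirement on $\alpha$ up (Lemma~\ref{lemma: eta unit} is then needed at index $\alpha-1>3$) and shifts the indices appearing in the tightness argument, but the downstream use in Lemma~\ref{lemma tight mu} only requires a single fixed small Hölder exponent ($\theta=1/8$), so either your restricted-range version or the shifted stopping time would suffice for the application.
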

	\begin{proof}
	Notice that  for each $N\in \mathbb{N},$ it holds $\mathbb{P}$-a.s. that for every $t\in [0,T],$
	\begin{align*}
		\eta^N_{t\wedge\tau^N_{ \alpha,M,R}}-\eta^N_0&=-\sqrt{N}\int_{0}^{t\wedge \tau^N_{ \alpha,M,R}}\nabla\cdot (\mu_N(s)K*\mu_N(s))  \mathd s+\sqrt{N}\int_{0}^{t\wedge \tau^N_{ \alpha,M,R}}\nabla \cdot  (v_sK*v_s )  \mathd s
		\\&+\int_0^{t\wedge \tau^N_{ \alpha,M,R}} \Delta\eta_s^N\mathd s
		+\frac{1}{2}\int_0^{t\wedge \tau^N_{ \alpha,M,R}} \sigma\cdot\nabla\(\sigma\cdot\nabla\eta_s^N\) \mathd s
		\\&+\mathcal{M}^N_{t\wedge \tau^N_{ \alpha,M,R}}-\int_{0}^{t\wedge \tau^N_{ \alpha,M,R}} \sigma\cdot \nabla\eta_s^N\mathd W_s.
	\end{align*}
Thus, $\{\| \eta_t^N - \eta^N_s \|_{H^{- \alpha}}$, $0 \leqslant s < t < T\}$ thus can be 
		controlled by the following relation
		\begin{equation}
			\| \eta_{ \alpha,M,R}^N(t) - \eta^N_{ \alpha,M,R}(s) \|_{H^{- \alpha}} \lesssim  \sum_{i = 1}^7 J_{s,
				t}^i \label{tight mu 2},
		\end{equation}
		where $J_{s, t}^i$, $i = 1, \ldots, 5$, are defined as
		\begin{align}
			J_{s, t}^1 \assign & \left\|  \int^t_s I_{[0,\tau^N_{ \alpha,M,R}]}(r)\Delta \eta_r^N \mathd r
			\right\|_{H^{- \alpha}}, \quad\quad\quad
			J^2_{s, t} \assign  \left\| \int^t_sI_{[0,\tau^N_{ \alpha,M,R}]}(r) \mathcal{K}^N_r \mathd r
			\right\|_{H^{- \alpha}}, \nonumber\\
			J^3_{s, t} \assign & \left\| \int^t_s I_{[0,\tau^N_{ \alpha,M,R}]}(r)\sigma\cdummy\nabla (\sigma\cdummy\nabla\eta_r^N) \mathd r
			\right\|_{H^{- \alpha}}, \quad
			J^4_{s, t} \assign  \left\| \mathcal{M}^N_{t\wedge\tau^N_{ \alpha,M,R}}-\mathcal{M}^N_{s\wedge\tau^N_{ \alpha,M,R}}\right\|_{H^{- \alpha}}, \nonumber\\
			J^5_{s, t} \assign & \left\| \int_{s}^{t}I_{[0,\tau^N_{ \alpha,M,R}]}(r)\sigma\cdummy\nabla\eta_r^N \mathd W_r \right\|_{H^{-
					\alpha}} . \nonumber
		\end{align}
		where $\mathcal{K}^N_t = \sqrt{N} \nabla \cdummy [K \ast \mu_N (t)
		\mu_N (t) - K \ast v_tv_t].$
		
			For the drift terms $J_{s,
		t}^i,i=1,2,3,$	 it is sufficient  to prove that $$\sup_N \mathbb{E} \left( \sup_{0 \leqslant s < t \leqslant T} \frac{J_{s,
				t}^i}{(t - s)^{\frac{1}{2}}} \right)^2<C.$$
		Indeed, for every $\theta\in (0,\frac{1}{2}),$ we have the following estimate.
\begin{align*}
	\mathbb{E} \left( \sup_{0 \leqslant s < t \leqslant T} \frac{J_{s,
			t}^i}{(t - s)^{\theta}} \right)\leq&	\mathbb{E} \left( \sup_{0 \leqslant s < t \leqslant T} \frac{J_{s,
			t}^i}{(t - s)^{\frac{1}{2}}} \right)T^{\frac{1}{2}-\theta}
		\leq\[\mathbb{E} \left( \sup_{0 \leqslant s < t \leqslant T} \frac{J_{s,
				t}^i}{(t - s)^{\frac{1}{2}}} \right)^2\]^{\frac{1}{2}}T^{\frac{1}{2}-\theta}.
\end{align*}
		For $J_{s,
		t}^1,$	 note that $\mathcal{R}^{-1}_t(v)=m\exp\bigg\{\int_{0}^{t}m\|v_s\|^2_{H^4}\mathd s\bigg\}\leq R$ before the stopping time $\tau_{ \alpha,M,R},$ and we apply H\"older's inequality to derive the following estimate
		\begin{align}\label{holder-1}
		\begin{split}
				\sup_N \mathbb{E} \left( \sup_{0 \leqslant s < t \leqslant T} \frac{J_{s,
					t}^1}{(t - s)^{\frac{1}{2}}} \right)^2 \leq&\sup_N \mathbb{E}\[ \int^T_0 I_{[0,\tau^N_{ \alpha,M,R}]}(t)\| \Delta
			\eta_t^N \|^2_{H^{- \alpha}} \mathd t \]
			\\\leq& \sup_N \mathbb{E}\[\int^T_0 R\mathcal{R}_t(v)\| \Delta
			\eta_t^N \|^2_{H^{- \alpha}} \mathd t \]
			\\\leq& RT\sup_N \sup_{t \in [0, T]}\mathbb{E}\[\mathcal{R}_t(v)\| \Delta
			\eta_t^N \|^2_{H^{- \alpha}}\]\leq RTC_{\alpha},
		\end{split}
		\end{align}
		where the final inequality follows from  Corollary \ref{lemma uni mu2}.
			Using a similar approach,  we have
		\begin{align}\label{holder-2}
			\begin{split}
				\sup_N \mathbb{E} \left( \sup_{0 \leqslant s < t \leqslant T} \frac{J_{s,
						t}^2}{(t-s)^{\frac{1}{2}}} \right)^2 \leq &  RT\sup_N \sup_{t \in [0, T]} \mathbb{E} \[\mathcal{R}_t(v)\|\mathcal{K}^N_t
				\|^2_{H^{- \alpha}}\] \leq RTC_{\alpha}, 
			\end{split}
		\end{align}
		where we used Corollary \ref{lemma uni mu2} to reach the final inequality, and
		\begin{align}\label{holder-3}
			\begin{split}
				\sup_N \mathbb{E} \left( \sup_{0 \leqslant s < t \leqslant T} \frac{J_{s,
						t}^3}{(t-s)^{\frac{1}{2}}} \right)^2 \leq & RT\sup_N \sup_{t \in [0, T]} \mathbb{E} \[\mathcal{R}_t(v)\|\sigma\cdummy\nabla (\sigma\cdummy\nabla\eta_t^N)
				\|^2_{H^{- \alpha}}\]
				\\\leq &C_{\sigma}R T\sup_N \sup_{t \in [0, T]} \mathbb{E} \[\mathcal{R}_t(v)\|\eta_t^N
				\|^2_{H^{- \alpha+2}}\] \leq RTC_{\alpha,\sigma}. 
			\end{split}
		\end{align}
		where we applied Lemma \ref{lemma triebel} to get the second inequality and Lemma \ref{lemma uni mu2} to get the final inequality.
	
		Followng the same approach in Lemma \ref{lemma m}, i.e., \cite[Lemma 3.2]{wang2023gaussian}, we conclude that  for any $\theta\in(0,\frac{1}{2}),$
		\begin{align}\label{holder-4}
			\sup_N \mathbb{E} \bigg(\sup_{0 \leqslant s < t \leqslant T}\frac{J^4_{s, t}}{|t-s|^{\theta}}\bigg) \leq C_{\alpha, \theta}. 
		\end{align}

	For $J_{s,
			t}^5$, applying  Burkholder-Davis-Gundy's inequality {\cite[Theorem 2.3.8]{breit2018stochastically}} gives, for any $\theta' >1,$
		\begin{align*}
			\begin{split}
				\sup_N \mathbb{E} (|J^5_{s, t}|^{2 \theta'}) \leq& C_{\theta'} \sup_N\mathbb{E}\[\int_{s}^{t}I_{[0,\tau^N_{ \alpha,M,R}]}(r)\|\sigma\cdummy\nabla\eta_{R}^N
				\|^{2}_{H^{-
						\alpha}} \mathd r\]^{\theta'}
				\\\leq&  C_{\theta',\sigma} \sup_N\mathbb{E}\[\int_{s}^{t}I_{[0,\tau^N_{ \alpha,M,R}]}(r)\sup_{s\in [0, r]}\|\eta_s^{N}\|^2_{H^{-\alpha}}\mathd r\]^{\theta'}\leq  C_{\theta',\sigma,M}\mid t-s\mid^{\theta'},
			\end{split}
		\end{align*}
		where in the final inequality we use the fact that for $\mathbb{P}$-a.s.,
			$\sup_{s\in [0, t]}\|\eta_R^{N}(s)\|^2_{H^{-\alpha}}\leq  M$
		up to the stopping time $\tau_{ \alpha,M,R}.$
		By applying the Kolmogorov continuity theorem {\cite[Theorem 2.3.11]{breit2018stochastically}}, we then deduce that for any $\theta\in(0,\frac{1}{2}),$
		\begin{align}\label{holder-5}
			\sup_N \mathbb{E} \bigg(\sup_{0 \leqslant s < t \leqslant T}\frac{J^5_{s, t}}{|t-s|^{\theta}}\bigg) \leq C_{ \theta,\sigma,M}. 
		\end{align}   The result follows by combining inequalities \eqref{holder-1}-\eqref{holder-5}.
	\end{proof}
	
	Then, we  demonstrate the tightness of the fluctuation measures $(\eta^N)_{N \geqslant 1}$ in the Polish space $$\mathcal{X}= \left\{\bigcap_{k\in \mathbb{N}}  \left[ C ([0, T],
	H^{-3-\frac{1}{k}}) \cap L^2([0,T],H^{-1-\frac{1}{k}})\right] \right\}.$$
	\begin{lemma}
		\label{lemma tight mu}The sequence of laws of $(\eta^N)_{N \geqslant 1}$ is
		tight in the space $\mathcal{X}.$
	\end{lemma}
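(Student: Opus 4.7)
The strategy is a two-layer localization combined with compact embeddings of negative-order Sobolev spaces. Given $\varepsilon>0$, I would construct a compact $K_\varepsilon\subset\mathcal{X}$ with $\sup_N\mathbb{P}(\eta^N\notin K_\varepsilon)<\varepsilon$. Since $v\in L^2([0,T];H^4)$ almost surely by Lemma \ref{thm:spde} and Definition \ref{def:mean}, one has $\tau_R\uparrow T$ almost surely as $R\uparrow\infty$, so I would pick $R$ with $\mathbb{P}(\tau_R<T)<\varepsilon/4$. Fixing some $\alpha_0\in(3,4)$, Lemma \ref{lemma: eta unit} together with Markov's inequality then provides $M>0$ with $\sup_N\mathbb{P}(\sup_{t\in[0,T]}\|\eta^N_R(t)\|_{H^{-\alpha_0}}^2>M)<\varepsilon/4$, so $\sup_N\mathbb{P}(\tau^N_{\alpha_0,M,R}<T)<\varepsilon/2$, and on the complementary good event $\eta^N$ coincides with the stopped trajectory $\eta^N_{\alpha_0,M,R}$ on all of $[0,T]$.

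Next I would establish tightness of the localized laws in each factor space. For the continuous part, pick $\alpha_k\in(3,3+1/k)$ for every $k\in\mathbb{N}$, so that $H^{-\alpha_k}\hookrightarrow H^{-3-1/k}$ is compact by Lemma \ref{lemma embedding}. Lemmas \ref{lemma equi mu} and \ref{lemma: eta unit} (after enlarging $M$, if needed, so that $\tau^N_{\alpha_0,M,R}\le\tau^N_{\alpha_k,M,R}$ on the good event) give a uniform $L^1(\Omega)$ bound on $\|\eta^N_{\alpha_k,M,R}\|_{C^{\theta_k}([0,T];H^{-\alpha_k})}+\sup_{t\in[0,T]}\|\eta^N_{\alpha_k,M,R}(t)\|_{H^{-\alpha_k}}$, and an Arzel\`a--Ascoli argument combined with Markov's inequality then yields tightness in $C([0,T];H^{-3-1/k})$. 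For the $L^2$ component, pick $\beta_k\in(1,1+1/k)$: Corollary \ref{lemma uni mu2}(i) bounds the stopped process uniformly in $L^2(\Omega\times[0,T];H^{-\beta_k})$, and combined with the time regularity just established, a standard Aubin--Lions type compactness criterion (with pivot triple $H^{-\beta_k}\hookrightarrow H^{-1-1/k}\hookrightarrow H^{-\alpha_k}$) yields tightness in $L^2([0,T];H^{-1-1/k})$.

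To assemble a compact set in $\mathcal{X}$, I would pick for each $k$ a compact $K_k^\varepsilon\subset C([0,T];H^{-3-1/k})\cap L^2([0,T];H^{-1-1/k})$ with $\sup_N\mathbb{P}(\eta^N_{\alpha_0,M,R}\notin K_k^\varepsilon)<\varepsilon\,2^{-k-2}$, and set $K_\varepsilon\assign\bigcap_k K_k^\varepsilon$. Since $d_{\mathcal{X}}$-convergence is equivalent to convergence in every factor space, a diagonal extraction argument shows $K_\varepsilon$ is compact in $\mathcal{X}$; combined with the localization step, this gives $\sup_N\mathbb{P}(\eta^N\notin K_\varepsilon)<\varepsilon$, completing the proof.

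The principal obstacle, compared with the deterministic-mean-field setting of \cite{wang2023gaussian}, is that the entropy bound of Lemma \ref{thm:entropy} is only $\omega$-wise finite, with exponential dependence on $\int_0^t\|v_s\|_{H^4}^2\,\mathrm{d}s$, and the new transport-noise term $(\sigma\cdot\nabla\eta^N_s)\,\mathrm{d}W_s$ in \eqref{spde-etaN} contributes an additional source of stochasticity that must be handled inside the H\"older estimate. The two-parameter stopping time $\tau^N_{\alpha,M,R}$, tailored to simultaneously control the random weight $\mathcal{R}_t(v)^{-1}$ and the $H^{-\alpha}$-norm of $\eta^N$, is precisely what enables the deterministic compactness machinery to be applied.
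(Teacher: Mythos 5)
Your strategy is essentially the one in the paper: localize twice (once via $\tau_R$ to freeze the random weight $\mathcal{R}_T^{-1}(v)$, once via $\tau^N_{\alpha,M,R}$ to freeze the $H^{-\alpha}$ norm of $\eta^N$), combine the H\"older estimate of Lemma \ref{lemma:Holdermu}, the sup-norm estimate of Lemma \ref{lemma: eta unit}, and the $L^2$-estimate of Corollary \ref{lemma uni mu2} via Chebyshev, build compact sets by Arzel\`a--Ascoli plus an Aubin--Lions type criterion, and assemble the countable intersection to conclude in $\mathcal{X}$.

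There is, however, a gap in the phrase ``after enlarging $M$, if needed, so that $\tau^N_{\alpha_0,M,R}\le\tau^N_{\alpha_k,M,R}$ on the good event.'' You fix one $\alpha_0\in(3,4)$ and try to use a single good event for all $k$, but for $\alpha_k<\alpha_0$ (which happens for all large $k$ since $\alpha_k\to 3$ while $\alpha_0>3$ is fixed) the embedding goes the wrong way: $\|\cdot\|_{H^{-\alpha_0}}\le\|\cdot\|_{H^{-\alpha_k}}$, so a bound $\sup_t\|\eta^N_t\|_{H^{-\alpha_0}}^2\le M$ gives \emph{no} control on $\|\eta^N_t\|_{H^{-\alpha_k}}$. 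Enlarging $M$ cannot repair this because the quantity you need to bound is simply not dominated by the quantity you control, and hence you cannot conclude $\tau^N_{\alpha_0,M,R}\le\tau^N_{\alpha_k,M',R}$ for any $M'$. The paper sidesteps this entirely by reversing the order of operations: it first reduces to proving tightness in each factor $\mathcal{X}_k=C([0,T];H^{-3-1/k})\cap L^2([0,T];H^{-1-1/k})$ for fixed $k$, and only then, for that fixed $k$, introduces the stopping time $\tau^N_{\alpha+2,M,R}$ at the single index $\alpha+2$ used in the H\"older condition of the compact set. The sup-norm and $L^2$ conditions of the compact set are bounded using only $\tau_R$ (via $\eta_R^N$ and the weight $\mathcal{R}_t(v)$), so no ordering of stopping times across indices is ever required. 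To repair your write-up, perform the localization separately for each $k$ (with its own $\alpha_k$, $M_k$), arranging the failure probabilities to sum to $\varepsilon$, and then take the intersection over $k$; this matches the paper's argument.
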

	\begin{proof}
		First, we claim that it is sufficient to prove the tightness of the sequence of laws of  $(\eta^N)_{N \geqslant 1}$ in the space $$\mathcal{X}_k\assign C ([0, T],
		H^{-3-\frac{1}{k}}) \cap L^2([0,T],H^{-1-\frac{1}{k}})$$ for each fixed $k\in \mathbb{N}.$  	Indeed, if  the sequence of  laws  of $(\eta^N)_{N \geqslant 1}$ is tight in the space $\mathcal{X}_k$ for each $k,$ then
		for any $ \vartheta>0,$  we can choose compact sets $A_k^{ \vartheta}$ in $ C ([0, T],
		H^{- 3-\frac{1}{k}}) \cap L^2([0,T],H^{-1-\frac{1}{k}})$ for each $k\in \mathbb{N}$ such that
		\begin{align*}
			\mathbb{P}(\eta^N\notin A_k^{ \vartheta})<  \vartheta 2^{-k},\quad \forall N\in \mathbb{N}.
		\end{align*}
		The set $A^{ \vartheta}$ in $\mathcal{X}$ defined by
		\begin{align*}
			A^{ \vartheta}\assign \bigcap_{k\in \mathbb{N}}A_k^{ \vartheta}
		\end{align*}
		is   compact 
		and satisfies
		\begin{align*}
			\mathbb{P}\(\eta^N
			\notin A^{ \vartheta}\)\leq \sum_{k\in\mathbb{N}}\mathbb{P}(\eta^N\notin A_k^{ \vartheta})< \vartheta,\quad \forall N\in \mathbb{N},
		\end{align*}
		which implies  $(\eta^N)_{N \geqslant 1}$ is
		tight in the space $\mathcal{X}.$

	Next, we prove that the sequence $(\eta^N)_{N \geqslant 1}$ is
		tight in the space $ C ([0, T],
		H^{- \alpha-2}) \cap L^2([0,T],H^{-\alpha}),$ for every
		$\alpha > 1 .$
		For any $\delta>0$  and $\alpha>\alpha' > 1 ,$ we define  
		\begin{align*}
			K^{\delta}\assign&\biggl\{ \eta\big|  \sup_{t \in [0, T]}\|\eta_{t}\|_{H^{-\alpha'-2}}\leq \frac{1}{\delta }, \int_{0}^{T}\|\eta_{t}\|^2_{H^{-\frac{2\alpha+2}{4}}}\mathd t\leq \frac{1}{\delta },\|\eta\|_{C^{\frac{1}{8}}([0,T],H^{-\alpha-2})}\leq \frac{1}{\delta }\biggr\}
		\end{align*}
		which is   a compact subset of  $ C ([0, T],H^{- \alpha-2}) \cap L^2([0,T],H^{-\alpha})$ as established by \cite[corollary 1.8.4]{breit2018stochastically} and Arzela-Ascoli theorem \cite[Theorem 7.17]{kelley2017topology}.
			Recall the stopping times and stopped processes, 
		\begin{align*}
			\tau_R&= \inf \bigg\{0<t\leq T:\mathcal{R}^{-1}_t(v)= m\exp\bigg\{\int_{0}^{t}m\|v_s\|^2_{H^4}\mathd s\bigg\}>R\bigg\},
			\\\tau^N_{ \alpha+2,M,R}&= \inf \bigg\{0<t\leq T: \sup_{s\in [0, t]}\|\eta_R^{N}(s)\|^2_{H^{-\alpha-2}}> M\bigg\}\wedge\tau_R,
		\end{align*} 
	and \begin{align*}
		\eta_R^N(t)&=\eta^N(t\wedge
		\tau_R),\quad t\in [0,T],			
		\\\eta_{ \alpha+2,M,R}^N(t)&=\eta^N(t\wedge
		\tau^N_{ \alpha+2,M,R}),\quad t\in [0,T],
	\end{align*}
		we can directly conclude that for every $R>0$ and $M>0,$
		\begin{align*}
			&\mathbb{P}\bigg(\eta^N
			\notin K^\delta\bigg)\\\leq& \mathbb{P}\bigg(\eta^N=\eta_{R}^N=\eta_{ \alpha+2,M,R}^N
			\notin K^\delta, m\exp\bigg\{\int_{0}^{T}m\|v_s\|^2_{H^4}\mathd s\bigg\}\leq R,\sup_{t\in [0, T]}\|\eta_R^{N}(t)\|^2_{H^{-\alpha-2}}\leq M\bigg)\\+&\mathbb{P}\bigg(m\exp\bigg\{\int_{0}^{T}m\|v_s\|^2_{H^4}\mathd s\bigg\}\geq R\bigg)
			+\mathbb{P}\bigg(\sup_{t\in [0, T]}\|\eta_R^{N}(t)\|^2_{H^{-\alpha-2}}\geq M\bigg).
		\end{align*}
	Given $\eps>0,$  since  
	$\mathcal{R}^{-1}_T(v)=m\exp\bigg\{\int_{0}^{T}m\|v_s\|^2_{H^4}\mathd s\bigg\}<\infty,
	\mathbb{P}$-a.s.,  we can thus choose a sufficiently large $R_0>0$ such that $$\mathbb{P}\(m\exp\bigg\{\int_{0}^{T}m\|v_s\|^2_{H^4}\mathd s\bigg\}\geq R_0\)\leq\frac{\eps }{4}.$$
	By Chebyshev's inequality, we have\begin{align*}
		\mathbb{P}\(\sup_{t\in [0, T]}\|\eta_{R_0}^{N}(t)\|^2_{H^{-\alpha-2}}\geq M\)\leq\frac{\mathbb{E}\[\sup_{t\in [0, T]}\|\eta_{R_0}^{N}(t)\|^2_{H^{-\alpha-2}}\]}{M}.
	\end{align*}
Using Lemma \ref{lemma: eta unit},	we can  choose a sufficiently large $M_0>0$ such that for all $N\in\mathbb{N},$ \begin{align*}
	\mathbb{P}\bigg(\sup_{t\in [0, T]}\|\eta_{R_0}^{N}(t)\|^2_{H^{-\alpha-2}}\geq M_0\bigg)\leq \frac{\eps }{4}.
\end{align*}
We then conclude that 
\begin{align*}
		\mathbb{P}\bigg(\eta^N
	\notin K^\delta\bigg)&\leq \mathbb{P}\bigg(\eta^N=\eta_{R_0}^N=\eta_{\alpha+2,M_0,R_0}^N
	\notin K^\delta, m\exp\bigg\{\int_{0}^{T}m\|v_s\|^2_{H^4}\mathd s\bigg\}\leq R_0,\\&\text{and }\sup_{t\in [0, T]}\|\eta_{R_0}^{N}(t)\|^2_{H^{-\alpha-2}}\leq M_0\bigg)+\frac{\eps}{2}.
	\\&\leq\mathbb{P}\bigg(\sup_{t \in [0, T]}\|\eta_{R_0}^N(t)\|_{H^{-\alpha'-2}}\geq \frac{1}{\delta}\bigg)
	+\mathbb{P}\bigg(\|\eta_{\alpha+2,M_0,R_0}^N\|_{C^{\frac{1}{8}}([0,T],H^{-\alpha-2})}\geq \frac{1}{\delta }\bigg)
	\\&+\mathbb{P}\bigg(R_0\int_{0}^{T}\mathcal{R}_t(v)\|\eta^N_{t}\|^2_{H^{-\frac{2\alpha+2}{4}}}\mathd t\geq \frac{1}{\delta }\bigg)+\frac{\eps}{2}.
\end{align*}
		 Applying Chebyshev's inequality again, along with Corollary \ref{lemma uni mu2}, Lemma \ref{lemma: eta unit} and Lemma \ref{lemma:Holdermu}, we can select a sufficiently small  $\delta_0>0$ such that $\mathbb{P}\(\eta^N
		\notin K^{\delta_0}\)<\eps,$ which completes the proof.
	\end{proof}
	
	It is well known that every probability measure on a Polish space is tight. Then, we have the tightness of laws of $v$ and $W.$ Combining Lemma \ref{lemma m} and Lemma \ref{lemma tight mu}, we conclude the following result.
	\begin{lemma}\label{tightness}
		The law of $\( v,\eta^N,\mathcal{M}^N, W\)$ with values in $\mathcal{H}$ as defined in \eqref{def:H} is tight.
	\end{lemma}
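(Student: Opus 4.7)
The plan is to prove tightness of the joint law on $\mathcal{H}$ by establishing tightness of each of the four marginals separately and then taking a Cartesian product of component-wise compact sets. The underlying fact is standard: if $\{\mu_N^i\}_N$ is a tight sequence of Borel probability measures on a Polish space $E_i$ for $i = 1,\ldots,4$, then any sequence of joint laws with these marginals is tight on the product space, since for any $\varepsilon > 0$ we can pick compact $K_i^\varepsilon \subset E_i$ with $\mu_N^i((K_i^\varepsilon)^c) < \varepsilon/4$ uniformly in $N$, and then $K_1^\varepsilon \times \cdots \times K_4^\varepsilon$ is compact in the product by Tychonoff and has joint complement of probability at most $\varepsilon$ by a union bound.

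First I would dispose of the $v$- and $W$-marginals. Since $v$ is a single $\mathcal{V}$-valued random element, namely the unique probabilistically strong solution to \eqref{eqt:mean} supplied by Lemma \ref{thm:spde} (with trajectories in $\mathcal{V}$ thanks to the regularity bounds \eqref{regu-1}--\eqref{regu-2}) and does not depend on $N$, its law is a single Borel probability measure on the Polish space $\mathcal{V}$, hence tight by Ulam's theorem. The same reasoning applies to the fixed Brownian motion $W$ on $\mathcal{W} = C([0,T];\mathbb{R})$. Next, the tightness of $\{\eta^N\}$ in $\mathcal{X}$ is exactly the content of Lemma \ref{lemma tight mu} and requires nothing further.

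The only component requiring a small additional step is the $\mathcal{M}^N$-marginal: Lemma \ref{lemma m} delivers tightness in $C([0,T]; H^{-\alpha})$ for each individual $\alpha > 2$, whereas the target space is $\mathcal{Y} = \bigcap_{k\in\mathbb{N}} C([0,T]; H^{-2-1/k})$. I would upgrade this via the diagonal intersection argument already used at the opening of the proof of Lemma \ref{lemma tight mu}: for each $k \in \mathbb{N}$ pick a compact set $A_k^\varepsilon \subset C([0,T]; H^{-2-1/k})$ with $\mathbb{P}(\mathcal{M}^N \notin A_k^\varepsilon) < \varepsilon\, 2^{-k}$ uniformly in $N$, then set $A^\varepsilon := \bigcap_k A_k^\varepsilon$. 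This intersection is compact in $\mathcal{Y}$ by the definition of its Fréchet metric $d_\mathcal{Y}$, and $\sigma$-subadditivity gives $\mathbb{P}(\mathcal{M}^N \notin A^\varepsilon) < \varepsilon$ uniformly in $N$.

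There is no genuine obstacle at this stage, as the entire analytic weight of the argument has already been absorbed into Lemmas \ref{thm:spde}, \ref{lemma m}, and \ref{lemma tight mu}. The final step is purely assembly: combine the four componentwise compact sets into a product compact set in $\mathcal{H}$ and conclude via the union bound above.
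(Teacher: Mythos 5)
Your proof is correct and follows essentially the same route as the paper: marginal tightness for each of the four components (with $v$ and $W$ handled by Ulam's theorem on a Polish space, $\eta^N$ by Lemma~\ref{lemma tight mu}, and $\mathcal{M}^N$ by upgrading Lemma~\ref{lemma m} to $\mathcal{Y}$ via the same diagonal intersection argument used in Lemma~\ref{lemma tight mu}), then a product-of-compacts plus union bound to get joint tightness. The paper only sketches this in a one-sentence remark preceding the lemma, so your write-up is simply a more explicit rendering of the same argument.
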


		By Skorohod's representation, we obtain the following result.
		\begin{proposition}\label{pro:skorokhod}
			There exists a subsequence of $(v,\eta^N,
			\mathcal{M}^N,W)_{N \geqslant 1}$, still denoted by $(v,\eta^{N}, \mathcal{M}^{N},W)$ for simplicity,
			and a probability space $(\tilde{\Omega}, \tilde{\mathcal{F}},
			\tilde{\mathbb{P}})$ with $\mathcal{H}$-valued random variables
			$(\tilde{v}^N,\tilde{\eta}^{N_{}}, \tilde{\mathcal{M}}^N,\tilde{W}^N)_{N \geqslant 1}$ and
			$(\tilde{v},\tilde{\eta}, \tilde{M},\tilde{W})$ such that
			\begin{enumerate}
				\item For each $N \in \mathbb{N}$, the law of $(\tilde{v}^N,\tilde{\eta}^{N_{}}, \tilde{\mathcal{M}}^N,\tilde{W}^N)$ coincides with the law of $(v,\eta^{N},
				\mathcal{M}^{N},W)$.
				\item The sequence of $\mathcal{H}$-valued random variables $(\tilde{v}^N,\tilde{\eta}^{N_{}}, \tilde{\mathcal{M}}^N,\tilde{W}^N)_{N \geqslant 1}$ converges to $(\tilde{v},\tilde{\eta}, \tilde{M},\tilde{W})$ in $\mathcal{H}$  $\tilde{	\mathbb{P}}$-a.s..
			\end{enumerate}
		\end{proposition}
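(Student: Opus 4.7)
\textbf{Proof proposal for Proposition \ref{pro:skorokhod}.} The plan is to combine the tightness statement in Lemma \ref{tightness} with Prokhorov's theorem and the Skorokhod representation theorem. First, I would verify that the space $\mathcal{H} = \mathcal{V}\times\mathcal{X}\times\mathcal{Y}\times\mathcal{W}$ introduced in \eqref{def:H} is indeed a Polish space. Each factor is a Polish space: $\mathcal{V} = C([0,T];L^2) \cap L^2([0,T];H^4)$ is separable and complete under the stated norm, $\mathcal{W} = C([0,T];\mathbb{R})$ is classical, and the countable intersections $\mathcal{X} = \bigcap_{k}[C([0,T];H^{-3-1/k}) \cap L^2([0,T];H^{-1-1/k})]$ and $\mathcal{Y} = \bigcap_k C([0,T];H^{-2-1/k})$ are Polish because the metrics $d_\mathcal{X}$ and $d_\mathcal{Y}$ are the standard countable weighted sums of bounded pseudo-metrics on Polish spaces, which makes these projective limits separable and complete. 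Therefore $\mathcal{H}$, endowed with $d_\mathcal{H}$, is Polish.

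Next, by Lemma \ref{tightness} the family of laws $\{\mathcal{L}(v,\eta^N,\mathcal{M}^N,W)\}_{N\geq 1}$ is tight on $\mathcal{H}$, so Prokhorov's theorem furnishes a subsequence (still denoted by $N$) along which these laws converge weakly to some Borel probability measure $\mu$ on $\mathcal{H}$. Since $\mathcal{H}$ is Polish, the Skorokhod representation theorem yields a probability space $(\tilde\Omega,\tilde{\mathcal{F}},\tilde{\mathbb{P}})$ and $\mathcal{H}$-valued random variables $(\tilde v^N,\tilde\eta^N,\tilde{\mathcal{M}}^N,\tilde W^N)_{N\geq 1}$ and $(\tilde v,\tilde\eta,\tilde{\mathcal{M}},\tilde W)$ such that
\begin{equation*}
\mathcal{L}(\tilde v^N,\tilde\eta^N,\tilde{\mathcal{M}}^N,\tilde W^N) = \mathcal{L}(v,\eta^N,\mathcal{M}^N,W) \quad \text{for every } N,
\end{equation*}
and such that $(\tilde v^N,\tilde\eta^N,\tilde{\mathcal{M}}^N,\tilde W^N) \to (\tilde v,\tilde\eta,\tilde{\mathcal{M}},\tilde W)$ in $\mathcal{H}$, $\tilde{\mathbb{P}}$-a.s., which is the required conclusion.

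There is no real obstacle in the argument; the only point requiring care is the verification that $\mathcal{H}$ is a Polish space, since it is defined through an intersection of negative-order Sobolev spaces. However, because each of the pseudo-metrics $\|\cdot\|_{C([0,T];H^{-3-1/k})}$, $\|\cdot\|_{L^2([0,T];H^{-1-1/k})}$, and $\|\cdot\|_{C([0,T];H^{-2-1/k})}$ is the norm on a separable Banach space, the standard construction via weighted sums of truncated metrics produces a complete separable metric, so this step is routine. Once Polishness is in hand, Prokhorov and Skorokhod furnish the representation directly.
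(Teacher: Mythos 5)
Your argument is exactly the one the paper uses (implicitly, via the one-line remark preceding the proposition): tightness from Lemma~\ref{tightness}, Prokhorov's theorem to extract a weakly convergent subsequence, and the Skorokhod representation theorem on the Polish space $\mathcal{H}$. The verification that $\mathcal{H}$ is Polish is the right point to flag, and your treatment of it is correct.
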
  
		\begin{remark}
			We emphasize that $(\tilde{v}^N,
			\tilde{W}^N)_{N \geqslant 1}$ are different random variables, but they share the same law on the new probability space $(\tilde{\Omega}, \tilde{\mathcal{F}},
			\tilde{\mathbb{P}}).$  
		\end{remark}
	
\section{Well posedness of the fluctuation SPDE}\label{sec:wellposed}
This section aims to establish the well-posedness of the fluctuation SPDE \eqref{LimitSpde} and the convergence from the fluctuation measures $(\tilde{\eta}^N)_{N\geq 1}$ to the probabilistically strong solution  $\tilde{\eta}$ of the fluctuation SPDE \eqref{LimitSpde}.

\subsection{Identification of the limiting points}\label{sec:identifi}
	In this subsection, we identify the limiting points of the fluctuation process  $(\tilde{\eta}^N)_{N\geq 1}$  as  probabilistically weak solutions to the fluctuation SPDE \eqref{LimitSpde}.  The proof proceeds in several steps. First, we identify the joint law of $(\tilde{M},\tilde{W})$ in Lemma \ref{lem:unilaw}. Next, we study the convergence of the interacting term $\tilde{ \mathcal{K}}_t^N (\varphi)$ in Lemma \ref{lemma limit no}. In the third step, we show that the limit process $\tilde{v},$ given in Propositon \ref{pro:skorokhod}, is the unique probabilistically strong solution to the mean field limit equation \eqref{eqt:mean}, as shown in Proposition \ref{prop:w}. Finally, we prove the convergence of the transport noise term $\sigma\cdot \nabla \tilde{\eta}_t \mathd \tilde{W}_t$  in Proposition \ref{prop:transport}.
	
	There are two  challenges in this section. Firstly, unlike in the case without environmental noise $W,$ we must address not only the convergence of the fluctuation measures $(\tilde{\eta}^N)_{N\geq 1}$  and the additive noise $(\tilde{\mathcal{M}}^N)_{N\geq 1},$ but also the convergence of the new multiplicative noise term $\sigma \cdot \nabla\tilde{\eta}^N \mathd \tilde{W}^{N}_t,$ as well as   the convergence of the random mean field limit $\tilde{v}^N.$ 	Furthermore, we  identifying the joint law of $(\tilde{M},\tilde{W})$ by  studying  the conditional law of additive noise $\{\tilde{\mathcal{M}}_t, t\in [0,T]\}$ with respect to the environmental noise $\mathcal{F}^W_T,$ which is the main challenge in this section.  To address this, we establish the following strong convergence from the interacting particle system \eqref{eqt:vortex} to the conditional Mckean-Vlasov equation \eqref{eqt:ncopy}.

		\begin{proposition}\label{prop:strong-convergence}
		For $i\in \mathbb{N}$ and $t\in[0,T],$ we have \footnote{The well-posedness of stochastic point  vortex model \eqref{eqt:vortex} and conditional Mckean-Vlasov equation \eqref{eqt:ncopy}  has been given in \cite[Lemma 3.4, Proposition 4.1]{shao2024quantitative}. } 
		\begin{align}\label{pathwise-convergence}
			\lim\limits_{N\rightarrow \infty}	\mathbb{E}\big| X_i^N(t)-\bar{X}_i(t)\big|^2=0,
		\end{align}
	where $X^N$ is the unique probabilistically strong solution to the stochastic point  vortex model  \eqref{eqt:vortex} with i.i.d. initial values $X^N(0)=(X_1(0),\cdots,X_N(0))$ and $\bar{X}_i$ is the unique probabilistically strong solution to the  conditional Mckean-Vlasov equation \eqref{eqt:ncopy} with initial value $X_i(0).$
	\end{proposition}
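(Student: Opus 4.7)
The plan is to establish the result by an Itô coupling argument combined with Grönwall's inequality. Set $Y_i^N(t):=X_i^N(t)-\bar X_i(t)$. Because both processes are driven by the same initial data, the same individual Brownian motion $B_i$, and the same common noise $W$, the $\sqrt 2 B_i$ terms cancel exactly, so
$$Y_i^N(t)=\int_0^t D_s^N\,\mathd s+\int_0^t\bigl(\sigma(X_i^N(s))-\sigma(\bar X_i(s))\bigr)\circ\mathd W_s,$$
with $D_s^N:=\tfrac{1}{N}\sum_{j\neq i}K(X_i^N-X_j^N)-K\ast v_s(\bar X_i)$. I would convert the Stratonovich integral to Itô form, apply Itô's formula to $|Y_i^N(t)|^2$, and take expectations, which eliminates the martingale term. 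Splitting the drift as
$$D_s^N=\underbrace{\bigl[K\ast v_s(X_i^N)-K\ast v_s(\bar X_i)\bigr]}_{(\mathrm I)}+\underbrace{\Bigl[\tfrac{1}{N}\sum_{j\neq i}K(X_i^N-X_j^N)-K\ast v_s(X_i^N)\Bigr]}_{(\mathrm{II})},$$
the problem reduces to an integral inequality $r_N(t)\leq\int_0^t C_s\,r_N(s)\,\mathd s+\varepsilon_N(t)$ with $r_N(t):=\mathbb{E}|Y_i^N(t)|^2$, where $\mathbb{E}\int_0^T C_s\,\mathd s$ is finite uniformly in $N$ and $\varepsilon_N(t)\to 0$.

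The Lipschitz ingredients — the Itô correction $\tfrac{1}{2}(\sigma\cdot\nabla)\sigma$ arising from Stratonovich-to-Itô conversion of the common noise, the quadratic variation $|\sigma(X_i^N)-\sigma(\bar X_i)|^2$, and the mean-field difference $(\mathrm I)$ — each produce a term of the form $C_s r_N(s)$ with $C_s\lesssim 1+\|\nabla K\ast v_s\|_{L^\infty}^2\lesssim 1+\|v_s\|_{H^2}^2$, using smoothness of $\sigma$ together with the Sobolev embedding $H^2\hookrightarrow L^\infty$ in dimension two and the one-derivative smoothing of the Biot--Savart convolution. The time integrability of $C_s$ with finite expectation is furnished by Lemma \ref{thm:spde}, which provides $\mathbb{E}\sup_{t\in[0,T]}\|v_t\|_{H^2}^2<\infty$. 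Since $C_s$ is $\omega$-dependent, it is cleaner to run the Grönwall argument up to the stopping time $\tau_R$ introduced in Section \ref{sec:compactness}, establish the estimate on $[0,t\wedge\tau_R]$, and send $R\to\infty$ at the end using $\tau_R\nearrow T$.

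The main obstacle is the empirical fluctuation $\varepsilon_N(t)$ coming from $(\mathrm{II})$. By Cauchy--Schwarz and exchangeability this reduces to controlling
$$\mathbb{E}\bigl\langle\mu_N(s),|K\ast(\mu_N(s)-v_s)|^2\bigr\rangle,$$
and I would estimate it by conditioning on $\mathcal{F}_T^W$ (under which the reference measure $\bar F_N$ becomes the product $v_s^{\otimes N}$) and applying the Donsker--Varadhan variational principle in the spirit of Lemma \ref{lemma uni mu}(iii), using the conditional entropy bound of Lemma \ref{thm:entropy}. The essential difficulty is that $|K|^2\sim|x|^{-2}$ is not locally integrable in two dimensions, so the exponential integral under $v_s^{\otimes 2}$ cannot be controlled directly. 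I would overcome this by invoking Lemma \ref{fournier3.3} with $r\in(0,2)$ close to $2$ to bound $\iint|x-y|^{-r}v_s^{\otimes 2}(\mathd x\,\mathd y)$ by a fractional power of the Fisher information of $v_s$, which is finite thanks to the strict positivity $\inf v_0>0$ and the regularity $v\in H^4$; combining this with the Jabin--Wang type exponential inequalities behind Corollary \ref{lemma uni mu2}, and finally averaging against the entropy, gives $\varepsilon_N(t)\to 0$ uniformly on $[0,T]$. Grönwall's inequality then concludes that $r_N(t)\to 0$.
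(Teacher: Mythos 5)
Your overall scaffold (Stratonovich-to-It\^o conversion, It\^o's formula for $|Y_i^N|^2$, localization by a stopping time controlling $\|v\|_{H^4}$ and $\|v\|_{H^2}$, Gr\"onwall, then removal of the stopping time) matches the paper's, and your decomposition of the drift into the Lipschitz part $(\mathrm I)=K\ast v_s(X_i^N)-K\ast v_s(\bar X_i)$ and the empirical fluctuation $(\mathrm{II})=K\ast\mu_N(X_i^N)-K\ast v_s(X_i^N)$ is exactly the one the paper uses. The treatment of $(\mathrm I)$ and of the $\sigma$-terms is also the same, via $\|\nabla K\ast v_s\|_{L^\infty}\lesssim\|v_s\|_{H^2}$ and smoothness of $\sigma$.

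The gap is in your handling of $(\mathrm{II})$. Having squared $Y_i^N(t)$, you apply Young's inequality to $Y_i^N\cdot(\mathrm{II})$ and get $|(\mathrm{II})|^2$, which by exchangeability leads you to $\mathbb{E}\langle\mu_N,|K\ast(\mu_N-v_s)|^2\rangle$. This quantity is not small: expanding the square, the diagonal $\frac{1}{N^2}\sum_{j\neq i}\mathbb{E}|K(X_i-X_j)|^2$ forces you to control $\int|x-y|^{-2}F^{N;2}(\mathd x\,\mathd y)$, and $|x|^{-2}$ is \emph{not} locally integrable in $\mathbb{T}^2$. Lemma~\ref{fournier3.3} only covers $|x|^{-r}$ with $r<2$, and taking $r$ close to $2$ does not help because the integrand is exactly the endpoint $r=2$. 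Likewise, the Donsker--Varadhan strategy behind Corollary~\ref{lemma uni mu2}(iii) is built for quantities that are \emph{linear} in $K$ (the bilinear form $\langle\varphi K\ast(\mu_N-v),\mu_N-v\rangle$), not quadratic in $K$, so it does not apply here either. As written, $\varepsilon_N(t)$ is not finite, let alone $\to 0$.

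The paper sidesteps this entirely by never squaring $(\mathrm{II})$: since $\mathbb{T}^2$ is compact, $|X_i^N-\bar X_i|$ is bounded by a deterministic constant, so $|Y_i^N\cdot(\mathrm{II})|\le C|(\mathrm{II})|$, and one only needs an $L^1$-in-expectation estimate of $\mathbb{E}|K\ast(\mu_N-v_s)(X_i^N)|$. This is then handled by regularizing $K$ by smooth $K_\eps$ with $|K_\eps(x)|\lesssim|x|^{-1}$ and splitting into three pieces: $(K-K_\eps)\ast\mu_N(X_i^N)$, bounded via Lemma~\ref{fournier3.3} with $r=3/2<2$ and the Fisher-information bound $\int_0^T I(F^N(t))\,\mathd t\le C_{v_0}$; $K_\eps\ast(\mu_N-v_s)(X_i^N)$, bounded by Sobolev embedding, the convolution estimate Lemma~\ref{lemma convolution}, and the relative-entropy estimate of Corollary~\ref{lemma uni mu2}(i), giving $\|K_\eps\|_{B^{2+\alpha}_{1,2}}/N$; and $(K_\eps-K)\ast v_s(X_i^N)$, bounded by $\|v_t\|_{L^\infty}\le\|v_0\|_{L^\infty}$. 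Sending $N\to\infty$ and then $\eps\to 0$ closes the argument. If you want to keep a Gr\"onwall scheme for $\mathbb{E}|Y_i^N|^2$, you should adopt this compactness trick and the $K_\eps$-regularization rather than a squared estimate.
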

	\begin{proof}
		
		Consider the difference between the particle system \eqref{eqt:vortex} and the conditional Mckean-Vlasov equation \eqref{eqt:ncopy},
		\begin{align*}
			X^N_i(t)-\bar{X}_i(t)&= \int_0^{t}\(K*\mu_N(s)(X^N_i(s))-K*v_s(\bar{X}_i(s))\)\mathd s\\&+\int_0^{t}\(\sigma(X^N_i(s))-\sigma(\bar{X}_i(s))\)\circ\mathd W_s
			\\&= \int_0^{t}\(K*\mu_N(s)(X^N_i(s))-K*v_s(\bar{X}_i(s))\)\mathd s\\&+\frac{1}{2}\int_{0}^{t}\((\sigma\cdot\nabla\sigma)(X^N_i(s))-(\sigma\cdot\nabla\sigma)(\bar{X}_i(s))\)\mathd s+\int_0^{t}\(\sigma(X^N_i(s))-\sigma(\bar{X}_i(s))\) \mathd W_s.
		\end{align*} 
	where we convert the Stratonovich integral  into It\^o's integral in the last equality.
		
	In our previous work \cite[Lemma 3.2]{shao2024quantitative}, we showed that $v\in C([0,T];H^2),\mathbb{P}$-a.s..	We now define the stopping times $\{\Theta_R,R>0\}$ (with the convention $\inf \emptyset=T$) as follows.
		\begin{align*}
			\Theta_R\assign \inf\bigg\{0<t\leq T| H_t(v)\assign\sup_{s\in [0, t]}\|v_s\|_{H^2}+m\exp\bigg\{\int_{0}^{t}m\|v_s\|^2_{H^4}\mathd s\bigg\}>R\bigg\}.
		\end{align*}
		Applying It\^o's formula, we obtain
		\begin{align*}
			\mid 	X^N_i(t\wedge \Theta_R)-\bar{X}_i(t\wedge \Theta_R)\mid^2 &=\int_{0}^{t\wedge \Theta_{R}}(X^N_i(s)-\bar{X}_i(s))\(K*\mu_N(s)(X^N_i(s))-K*v_s(\bar{X}_i(s))\)\mathd s
			\\&+\frac{1}{2}\int_{0}^{t\wedge\Theta_R} (X^N_i(s)-\bar{X}_i(s))\((\sigma\cdot\nabla\sigma)(X^N_i(s))-(\sigma\cdot\nabla\sigma)(\bar{X}_i(s))\)\mathd s
			\\&+\int_0^{t\wedge \Theta_R}(X^N_i(s)-\bar{X}_i(s))\(\sigma(X^N_i(s))-\sigma(\bar{X}_i(s))\) \mathd W_s
			\\&+\int_0^{t\wedge \Theta_R}\(\sigma(X^N_i(s))-\sigma(\bar{X}_i(s))\)^2 \mathd s.
		\end{align*}
		Direct computation yields the following identity
		\begin{align*}
				\mid 	X^N_i(t\wedge \Theta_R)-\bar{X}_i(t\wedge \Theta_R)\mid^2 &=\int_{0}^{t\wedge \Theta_R}(X^N_i(s)-\bar{X}_i(s))\(K*\mu_N(s)(X^N_i(s))-K*v_s(X^N_i(s))\)\mathd s
			\\&+\int_{0}^{t\wedge \Theta_R}(X^N_i(s)-\bar{X}_i(s))\(K*v_s(X^N_i(s))-K*v_s(\bar{X}_i(s))\)\mathd s
			\\&+\frac{1}{2}\int_{0}^{t\wedge\Theta_R} (X^N_i(s)-\bar{X}_i(s))\((\sigma\cdot\nabla\sigma)(X^N_i(s))-(\sigma\cdot\nabla\sigma)(\bar{X}_i(s))\)\mathd s
			\\&+\int_0^{t\wedge \Theta_R}(X^N_i(s)-\bar{X}_i(s))\(\sigma(X^N_i(s))-\sigma(\bar{X}_i(s))\) \mathd W_s
			\\&+\int_0^{t\wedge \Theta_R}\(\sigma(X^N_i(s))-\sigma(\bar{X}_i(s))\)^2 \mathd s.
		\end{align*}
	By  Sobolev embedding theorem, we deduce that \begin{align}\label{lip:kvs}
		\|K* v_s\|_{C^1}&\leq\|K* v_s\|_{H^3}=\|\nabla^{\perp} (-\Delta)^{-1}  v_s\|_{H^3}\leq \|v_s\|_{H^2}.
	\end{align}
Here we used the fact the   Biot-Savart kernel $K=\nabla^{\perp}G,$ where $G$ is the Green function of $-\Delta$ on torus, as given in \cite{flandoli2011full}.
	Through  the compactness of torus, together with the Lipschitz property of $\sigma,\sigma\cdot \nabla \sigma,K*v_s$ and the estimate for $K*v_s$ given in \eqref{lip:kvs}, we obtain the following estimate.
		\begin{align*}
		\mid 	X^N_i(t\wedge \Theta_R)-\bar{X}_i(t\wedge \Theta_R)\mid^2 &\leq  C\int_0^{t\wedge \Theta_R}\big|K*\mu_N(s)(X^N_i(s))-K*v_s(X^N_i(s))\big|\mathd s
		\\&+\int_{0}^{t}(C_\sigma+R)\mid 	X^N_i(s\wedge \Theta_R)-\bar{X}_i(s\wedge \Theta_R)\mid^2 \mathd s
		\\&+\int_0^{t\wedge \Theta_R}(X^N_i(s)-\bar{X}_i(s))\(\sigma(X^N_i(s))-\sigma(\bar{X}_i(s))\) \mathd W_s.
	\end{align*}
		Taking expectation of both sides, we then conclude that 
		\begin{align*}
			\mathbb{E}\[\mid 	X^N_i(t\wedge \Theta_R)-\bar{X}_i(t\wedge \Theta_R)\mid^2\]\leq &(C_\sigma+R) \int_{0}^{t} \mathbb{E}\[\mid 	X^N_i(s\wedge \Theta_R)-\bar{X}_i(s\wedge \Theta_R)\mid^2\] \mathd s
			\\&+C\int_{0}^{T}\mathbb{E}\[I_{\{s\leq\Theta_R\}}\mid K*\mu_N(s)(X^N_i(s))-K*v_s(X^N_i(s))\mid\]\mathd s.
		\end{align*}

		We now deal with the interacting  term $K*\mu_N(s)(X^N_i(s))-K*v_s(X^N_i(s)).$
		As done in \cite[Section 3.2]{flandoli2011full}, we first regularize $K$ by introducing  smooth periodic functions $K_{\eps}$ such that $K_{\eps}(x)= K(x)$ for any $|x|>\eps$ and they satisfy \begin{align}\label{biot-property}
			|K(x)|+|K_\eps(x)|\lesssim \frac{1}{|x|},\quad \forall  \eps >0.
		\end{align}
		We decompose the interaction term into three parts.
		\begin{align*}
			K*\mu_N(s)(X^N_i(s))-K*v_s(X^N_i(s))=\sum_{i=1}^{3}J_i,
		\end{align*}
		where \begin{align*}
			J_1(s)&\assign	K*\mu_N(s)(X^N_i(s))-K_{\eps}*\mu_N(s)(X^N_i(s)),
			\\J_2(s)&\assign K_\eps*\mu_N(s)(X^N_i(s))-K_\eps*v_s(X^N_i(s)),
			\\J_3(s)&\assign K_\eps*v_s(X^N_i(s))-K*v_s(X^N_i(s)).
		\end{align*}
	 Recall that $v$ is a continuous $L^2$-valued $\mathcal{F}^W_t$-adapted process, we then have 
		\begin{align*}\int_{0}^{T}\mathbb{E}\[I_{\{s\leq\Theta_R\}}J_1(s)\]\mathd s=&\int_{0}^{T}\mathbb{E}\[I_{\{s\leq\Theta_R\}}\mathbb{E}\[\frac{1}{N}\sum_{j\neq i}(K-K_\eps)(X^N_i(s)-X^N_j(s))|\mathcal{F}^W_T\]\]\mathd s
			\\\leq& \frac{N-1}{N}\int_{0}^{T}\mathbb{E}\[I_{\{s\leq\Theta_R\}}\mathbb{E}\[|K-K_\eps|(X^N_1(s)-X^N_2(s))|\mathcal{F}^W_T\]\]\mathd s,
		\end{align*}
		where the second inequality follows from the symmetry of the random measure $\mathcal{L}(X^N(t)|\mathcal{F}^W_T)(\mathd x^N)$ on $\mathbb{T}^{2N}.$ 
		The upper bound \eqref{biot-property} for the   Biot-Savart kernel $K$ and its regularized version $K_\eps$ yields \begin{align*}\int_{0}^{T}\mathbb{E}\[I_{\{s\leq\Theta_R\}}J_1(s)\]\mathd s\lesssim& \int_{0}^{T}\mathbb{E}\[I_{\{s\leq\Theta_R\}}\int_{\mathbb{T}^2}\frac{1}{|x_1-x_2|}I_{\{|x_1-x_2|\leq \eps \}}F^{N;2}(s)(\mathd x_1,\mathd x_2) \]\mathd s
			\\\leq& \mathbb{E}\[\eps^{\frac{1}{2}}\int_{\mathbb{T}^2}I_{\{|x_1-x_2|\leq \eps \}}\frac{1}{|x_1-x_2|^{\frac{3}{2}}}F^{N;2}(s)(\mathd x_1,\mathd x_2)\]\mathd s,
		\end{align*}
	where $F^{N;2}(t)(\mathd x_1,\mathd x_2)$ is the $2$-marginal of the random measure $F^N(t)(\mathd x^N)=\mathcal{L}(X^N(t)|\mathcal{F}^W_T)(\mathd x^N)$ on $\mathbb{T}^{2N}.$
Using the property of Fisher information, i.e., Lemma \ref{fournier3.3}, we conclude that 
	\begin{align*}\int_{0}^{T}\mathbb{E}\[I_{\{s\leq\Theta_R\}}J_1(s)\]\mathd s\lesssim&\eps^{\frac{1}{2}}\mathbb{E}\int_{0}^{T}\[C(I(F^{N;2}(s))+1)\]\mathd s
		\\\leq&\eps^{\frac{1}{2}}\mathbb{E}\int_{0}^{T}\[C(\frac{2}{N}I(F^{N}(s))+1)\]\mathd s\leq C_{v_0}\eps^{\frac{1}{2}}.
	\end{align*}
Here we used the sub-additivity of Fisher information, i.e., $I(F^{N,k}(t))\leq \frac{k}{N}I(F^N(t)), \forall k\leq N,$ to derive the second inequality. For the final inequality, we applied  the boundness about the Fisher information $I(F^N(t))$ for $F^N(t)(\mathd x^N),$  i.e., $\int_{0}^{T}I(F^N(t))\mathd t\leq C_{v_0},$ given in \cite[Lemma 4.2]{shao2024quantitative}.
		
	Moving on to $J_2,$ by the Sobolev embedding theorem, we find that  for $\alpha>1,$
		\begin{align*}
			\int_{0}^{T}\mathbb{E}\[I_{\{s\leq\Theta_R\}}J_2(s)\]\mathd s=&
			\int_{0}^{T}	\mathbb{E}\[I_{\{s\leq\Theta_R\}}\mathbb{E}\[K_\eps*\mu_N(s)(X^N_i(s))-K_\eps*v_s(X^N_i(s))|\mathcal{F}^W_T\]\]\mathd s
			\\\leq&\int_{0}^{T}\mathbb{E}\[I_{\{s\leq\Theta_R\}}\mathbb{E}\[\|K_\eps*\mu_N(s)-K_\eps*v_s\|_{L^\infty}|\mathcal{F}^W_T\] \mathd s
			\\\lesssim&\int_{0}^{T}\mathbb{E}\[I_{\{s\leq\Theta_R\}}\mathbb{E}\[\|K_\eps*\mu_N(s)-K_\eps*v_s\|_{H^{2}}|\mathcal{F}^W_T\] \mathd s.
		\end{align*}
		Applying   Lemma \ref{lemma convolution} yields 
			\begin{align*}
			\int_{0}^{T}\mathbb{E}\[I_{\{s\leq\Theta_R\}}J_2(s)\]\mathd s\lesssim&
			\int_{0}^{T}\mathbb{E}\[I_{\{s\leq\Theta_R\}}\mathbb{E}\[\|K_\eps\|_{B^{2+\alpha}_{1,2}}\|\mu_N(s)-v_s\|_{H^{-\alpha}}|\mathcal{F}^W_T\] \mathd s
			\\\leq&\|K_\eps\|_{B^{2+\alpha}_{1,2}}\int_{0}^{T}\mathbb{E}\[I_{\{s\leq\Theta_R\}}R\mathcal{R}_s(v)\|\mu_N(s)-v_s\|_{H^{-\alpha}}\] \mathd s
			\\\leq& \|K_\eps\|_{B^{2+\alpha}_{1,2}}T\frac{C_{R,\alpha}}{N},
		\end{align*} where the last inequality is derived using Corollary \ref{lemma uni mu2}.
		
		Finally, using the upper bound \eqref{biot-property} of   Biot-Savart kernel $K$ and regularized version   Biot-Savart kernel $K_\eps$ again, we have
		\begin{align*}
			\int_{0}^{T}\mathbb{E}\[I_{\{s\leq\Theta_R\}}J_3(s)\]\mathd s&\leq\int_{0}^{T}\mathbb{E}\[|K_\eps*v_s(X^N_i(s))-K*v_s(X^N_i(s))|\]\mathd s
			\\&\lesssim \int_{0}^{T}\mathbb{E}\[\int_{\mathbb{T}^2}\frac{1}{|X^N_i(s)-y|}I_{\{|X^N_i-y|\leq \eps \}}v_s(y)\mathd y\]\mathd s
			\\&\leq C_{v_0}\eps^{\frac{1}{2}} \int_{0}^{T}\mathbb{E}\[\int_{\mathbb{T}^2}\frac{1}{|X^N_i(s)-y|^{\frac{3}{2}}}\mathd y\]\mathd s\leq C_{v_0,T}\eps^{\frac{1}{2}} .
		\end{align*}
		Here the third inequality follows from the regularity of $v$ in Definition \eqref{def:mean}, i.e., $\|v_t\|_{L^{\infty}}\leq\|v_0\|_{L^{\infty}},\forall t\in[0,T],\mathbb{P}$-a.s..
		In summary,	we  conclude that 
		\begin{align*}
			\mathbb{E}\[	\mid 	X^N_i(t\wedge \Theta_{R})-\bar{X}_i(s\wedge \Theta_{R})\mid^2\]\leq&(C_\sigma+R) \int_{0}^{t} \mathbb{E}\[	\mid 	X^N_i(s\wedge \Theta_{R})-\bar{X}_i(s\wedge \Theta_{R})\mid^2\] \mathd s
			\\& +C_{v_0,T}\eps^{\frac{1}{2}}+\|K_\eps\|_{B^{2+\alpha}_{1,2}}T\frac{C_{R,\alpha}}{N}.
		\end{align*}
	 Choosing some $\alpha_0>1$	and applying Gronwall's lemma, 
		we get \begin{align*}
			\mathbb{E}\[	\mid 	X^N_i(t\wedge \Theta_{R})-\bar{X}_i(t\wedge \Theta_{R})\mid^2\]\leq& \(C_{v_0,T}\eps^{\frac{1}{2}}+\|K_\eps\|_{B^{2+\alpha_0}_{1,2}}T\frac{C_{R,\alpha_0}}{N}\)\exp\{(C_\sigma+R)T\}.
		\end{align*}
	
		Let $N\rightarrow \infty$ and then $\eps\rightarrow 0,$
		we have $\lim\limits_{N\rightarrow \infty}\mathbb{E}\[\sup_{s \in [0, T]}	\mid 	X^N_i(s\wedge \Theta_{R})-\bar{X}_i(s\wedge \Theta_{R})\mid^2\]=0,$ for every $R>0.$
			Since the compactness of $\mathbb{T}^2$ ensures that $\sup_N\mathbb{E}[\mid X_i^N(t)-\bar{X}_i(t)\mid^2]<\infty,$
		we arrive at the desired result.
		\begin{align*}
			&\lim\limits_{N\rightarrow \infty}\mathbb{E}[\mid X_i^N(t)-\bar{X}_i(t)\mid^2]
			\\=&\lim\limits_{N\rightarrow \infty}\sum_{n=0}^{\infty}\mathbb{E}\[\mid 	X^N_i(t\wedge \Theta_{n+1})-\bar{X}_i(t\wedge \Theta_{n+1})\mid^2I{\{n\leq H_T(v)<n+1\}}\]
				\\=&\sum_{n=0}^{\infty}\lim\limits_{N\rightarrow \infty}\mathbb{E}\[\mid 	X^N_i(t\wedge \Theta_{n+1})-\bar{X}_i(t\wedge \Theta_{n+1})\mid^2I{\{n\leq H_T(v)<n+1\}}\]
			\\\leq&	\sum_{n=0}^{\infty}\lim\limits_{N\rightarrow \infty}\mathbb{E}\[\mid 	X^N_i(t\wedge \Theta_{n+1})-\bar{X}_i(t\wedge \Theta_{n+1})\mid^2\]=0.
		\end{align*}
	\end{proof}

		Now, let $(\mathcal{G}_t)_{t \in [0,T]}$ be the natural filtration of the process $(\tilde{v},\tilde{\eta}, \tilde{\mathcal{M}},\tilde{W})$. That means that,  for each $t \in [0,T],$ $\mathcal{G}_t$ is the smallest $\sigma$-algebra such that $\tilde{v}(s) : \tilde{\Omega} \to L^{2},$ $\tilde{\eta}(s) : \tilde{\Omega} \to H^{\alpha}, \alpha=-3-\frac{1}{k}, k\in \mathbb{N},$ $\tilde{\mathcal{M}}(s) : \tilde{\Omega} \to H^{\gamma},\gamma
	=-2-\frac{1}{k}, k\in \mathbb{N},$ and $\tilde{W}(s) : \tilde{\Omega} \to \mathbb{R}$ are measurable for all $s \in [0,t]$. 	 
	Let $\mathcal{N} \assign\{ M \in \tilde{\mathcal{F}} \mid \tilde{\mathbb{P}}(M) = 0 \}$. We will consider the augmented filtration $(\tilde{\mathcal{F}}_t)_{t \in [0,T]}$ which is defined by
	\begin{align*}
		\tilde{\mathcal{F}}_t &\assign \bigcap_{s > t} \sigma\left( \mathcal{G}_s \cup \mathcal{N} \right), ~t\in[0,T]. 
	\end{align*}
	The augmented filtration $(\tilde{\mathcal{F}}_t)_t$ is a normal filtration.
	For $N \in \N,$ we do the same construction to define the natural filtration $(\mathcal{G}_t^N)_{t\in[0,T]}$ and the corresponding augmented filtration $(\tilde{\mathcal{F}}_t^N)_{t\in[0,T]}$ of $(\tilde{v}^N,\tilde{\eta}^N,
	\tilde{\mathcal{M}}^N,\tilde{W}^N).$ 
	
	We now focus on the conditional law of $\{\tilde{\mathcal{M}}_t,t\in [0,T]\}$ with respect to the  environmental noise $\mathcal{F}^{\tilde{W}}_T.$
	The basic idea,  inspired by \cite{kurtz2004stochastic}, is to consider a similar form of “$\{\mathcal{M}^N_t,t\in [0,T]\}$"  derived from the conditionally i.i.d. particles $\{\bar{X}_i\}_{i\geq1}$ of \eqref{eqt:ncopy}, instead of  the  interacting particle system \eqref{eqt:vortex}. This requires  the strong convergence result  of $X^N_i$ to $\bar{X}_i,$ established in Proposition \ref{prop:strong-convergence}.  Using the classical central limit theorem, we ultimately obtain  that  the conditional  law of $\{\tilde{\mathcal{M}}_t,t\in [0,T]\}$  given by \eqref{eqt:lem4.2}.
	\begin{lemma}\label{lem:unilaw} For every $\varphi\in C^{\infty}(\mathbb{T}^2)$ and $ 0\leq t<r+t\leq T,$ it holds $\tilde{\mathbb{P}}$-a.s. that
		\begin{align}\label{eqt:lem4.2}
			\begin{split}
				\tilde{\mathbb{E}}\[\exp\{i\langle \tilde{\mathcal{M}}_t,\varphi\rangle\}\mid \mathcal{F}^{\tilde{W}}_T\]&=\exp\bigg\{-\int_{0}^{t}\langle \mid  \nabla\varphi\mid^2,\tilde{v}_s\rangle\mathd s\bigg\},\\ \mathbb{E}\[\exp{i\left\langle \varphi,(\tilde{\mathcal{M}}_{t+r}-\tilde{\mathcal{M}}_{t}) \right\rangle}\mid \mathcal{F}^{\tilde{W}}_T\vee\mathcal{F}^{\tilde{\mathcal{M}}}_t\]&=\exp\bigg\{-\int_{t}^{t+r}\langle \mid \nabla\varphi\mid^2,\tilde{v}_s\rangle\mathd s\bigg\}.
			\end{split}
		\end{align}
	\end{lemma}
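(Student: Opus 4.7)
The strategy is to reduce the identification of $\tilde{\mathcal{M}}$'s conditional characteristic function to a conditional central limit theorem for an auxiliary additive functional built from the conditionally i.i.d.\ particles $\bar X_i$ of \eqref{eqt:ncopy}, and then transport the resulting identity to the Skorokhod copy $\tilde{\mathcal{M}}$ via Proposition~\ref{pro:skorokhod}. For each $\varphi\in C^\infty(\mathbb{T}^2)$, I would introduce on the original probability space the auxiliary continuous martingale $\langle\bar{\mathcal{M}}^N_t,\varphi\rangle := \frac{\sqrt 2}{\sqrt N}\sum_{i=1}^N\int_0^t\nabla\varphi(\bar X_i(s))\cdot\mathrm{d}B^i_s$. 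It\^o's isometry, exchangeability, and the Lipschitz property of $\nabla\varphi$ yield
\[
\mathbb{E}\bigl|\langle\mathcal{M}^N_t-\bar{\mathcal{M}}^N_t,\varphi\rangle\bigr|^2 \lesssim \|\varphi\|_{C^2}^2 \int_0^t \mathbb{E}|X^N_1(s)-\bar X_1(s)|^2\,\mathrm{d}s \xrightarrow[N\to\infty]{} 0
\]
by Proposition~\ref{prop:strong-convergence}, so it suffices to analyse $\bar{\mathcal{M}}^N$. Since the pairs $\{(X_i(0),B^i)\}_{i\ge 1}$ are i.i.d.\ and independent of $W$, conditional on $\mathcal{F}^W_T$ the pairs $\{(\bar X_i,B^i)\}$ remain i.i.d., hence so do the variables $M^i_t := \int_0^t\nabla\varphi(\bar X_i(s))\cdot\mathrm{d}B^i_s$, with vanishing conditional mean and, using $\mathcal{L}(\bar X_i(s)\mid\mathcal{F}^W_T)=v_s$, conditional second moment $\int_0^t\langle|\nabla\varphi|^2,v_s\rangle\,\mathrm{d}s$. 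Uniform boundedness of $\nabla\varphi$ on $\mathbb{T}^2$ makes the Lindeberg condition immediate, so the conditional CLT gives $\mathbb{E}[\exp\{i\langle\bar{\mathcal{M}}^N_t,\varphi\rangle\}\mid\mathcal{F}^W_T] \to \exp\{-\int_0^t\langle|\nabla\varphi|^2,v_s\rangle\,\mathrm{d}s\}$ in probability, and the coupling estimate yields the same limit with $\mathcal{M}^N$ in place of $\bar{\mathcal{M}}^N$.

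To transfer this identity to the tilde variables, pair the previous limit with an arbitrary bounded continuous functional $Y\colon\mathcal{W}\to\mathbb{R}$ and apply bounded convergence: $\lim_N\mathbb{E}[\exp\{i\langle\mathcal{M}^N_t,\varphi\rangle\}Y(W)] = \mathbb{E}[\exp\{-\int_0^t\langle|\nabla\varphi|^2,v_s\rangle\,\mathrm{d}s\}Y(W)]$. Equality of laws in Proposition~\ref{pro:skorokhod}(1), together with the $\tilde{\mathbb{P}}$-a.s.\ convergence in Proposition~\ref{pro:skorokhod}(2) (using continuity of $\mathcal{M}\mapsto\langle\mathcal{M}_t,\varphi\rangle$ on $\mathcal{Y}$ and of $v\mapsto\int_0^t\langle|\nabla\varphi|^2,v_s\rangle\,\mathrm{d}s$ on $\mathcal{V}$), upgrades this to
\[
\tilde{\mathbb{E}}\bigl[\exp\{i\langle\tilde{\mathcal{M}}_t,\varphi\rangle\}\,Y(\tilde W)\bigr] = \tilde{\mathbb{E}}\Bigl[\exp\{-\int_0^t\langle|\nabla\varphi|^2,\tilde v_s\rangle\,\mathrm{d}s\}\,Y(\tilde W)\Bigr].
\]
Once $\tilde v$ is identified as an $\mathcal{F}^{\tilde W}_T$-adapted strong solution of~\eqref{eqt:mean} (as established in the subsequent Proposition~\ref{prop:w}), the right-hand integrand is $\mathcal{F}^{\tilde W}_T$-measurable and varying $Y$ over a measure-determining class gives the first identity in~\eqref{eqt:lem4.2}. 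The increment identity is obtained by the same template applied to $\mathcal{M}^N_{t+r}-\mathcal{M}^N_t=\frac{\sqrt 2}{\sqrt N}\sum_i\int_t^{t+r}\nabla\varphi(X^N_i)\cdot\mathrm{d}B^i_s$: independence of $(B^i_s-B^i_t)_{s\in[t,t+r]}$ from $\mathcal{F}_t$ and the Markov property of $\bar X_i$ conditional on $\mathcal{F}^W_T$ supply the conditional i.i.d.\ structure needed at the increment level, and one tests against bounded continuous functionals of $(\tilde W,\tilde{\mathcal{M}}\vert_{[0,t]})$ on $\mathcal{W}\times C([0,t];H^{-2-1/k})$.

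The main obstacle is the Skorokhod transfer: the prelimit identity is merely asymptotic and is stated with respect to the original $\mathcal{F}^W_T$ (respectively $\mathcal{F}^W_T\vee\mathcal{F}^{\mathcal{M}^N}_t$), whereas the target is an \emph{exact} identity with respect to the limit $\sigma$-algebras $\mathcal{F}^{\tilde W}_T$ and $\mathcal{F}^{\tilde W}_T\vee\mathcal{F}^{\tilde{\mathcal{M}}}_t$ that live on the Skorokhod space. The delicate points will be selecting bounded continuous cylinder functionals that characterize these conditioning $\sigma$-algebras while remaining continuous for the topology of $\mathcal{H}$, securing the uniform integrability needed to commute $\lim_N$ with conditional expectations, and invoking the strong-solution identification $\tilde v=\tilde v(\tilde W)$ so that the limit integrand has the correct measurability.
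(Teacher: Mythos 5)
Your proposal follows the paper's proof essentially verbatim: introduce the auxiliary additive martingale $\bar{\mathcal{M}}^N$ built from the conditionally i.i.d.\ particles $\bar X_i$, kill the coupling error via Proposition~\ref{prop:strong-convergence}, apply the classical CLT conditionally on $\mathcal{F}^W_T$, and transfer the identity to the tilde variables by testing against bounded functions of $\tilde W$ (the paper uses $Z=g(\tilde W)$ with $g$ bounded continuous plus Lusin; you use a measure-determining class, which is the same idea), with the increment case "similar." Your additional remarks about the $\mathcal{F}^{\tilde W}_T$-measurability of $\int_0^t\langle|\nabla\varphi|^2,\tilde v_s\rangle\,\mathrm{d}s$ (secured by Proposition~\ref{prop:w}) and the choice of continuous cylinder functionals for the increment case are correct refinements of points the paper leaves implicit, not a different route.
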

	\begin{proof}
		Given a random variable  $Z$ which can be written as $g(\tilde{W}),$ where $g$ is  bounded and  continuous.  We thus have 
		\begin{align*}
			\tilde{\mathbb{E}}\[\exp\{i\langle \tilde{\mathcal{M}}_t,\varphi\rangle\}Z\]&=\lim\limits_{N\rightarrow \infty}\tilde{\mathbb{E}}\[\exp\{i\langle \tilde{\mathcal{M}}^N_t,\varphi\rangle\}g(\tilde{W}^N)\]
			\\&=\lim\limits_{N\rightarrow \infty}\mathbb{E}\[\exp\{i\langle \mathcal{M}^N_t,\varphi\rangle\}g(W)\]
			\\&=\lim\limits_{N\rightarrow \infty}\mathbb{E}\[(\exp\{i\langle \mathcal{M}^N_t,\varphi\rangle\}-\exp\{i\langle\bar{\mathcal{M}}^N_t,\varphi\rangle\})g(W)\]\\&+\lim\limits_{N\rightarrow \infty}\mathbb{E}\[\exp\{i\langle\bar{\mathcal{M}}^N_t,\varphi\rangle\}g(W)\].
		\end{align*}
		where $\langle\bar{\mathcal{M}}^N_t,\varphi\rangle$ is defined as $\frac{\sqrt{2}}{\sqrt{N}}  \sum_{i = 1}^N \int^t_0 \nabla \varphi (\bar{X}_{i}) \cdot
		\mathd B_s^i ,$ with $X_i^N$ in $\mathcal{M}$ replaced by $\bar{X}_i$ from the conditional Mckean Vlasov equation \eqref{eqt:ncopy}.
		
			Then,
		\begin{align*}
			&\mathbb{E}\bigg|(\exp\{i\langle \mathcal{M}^N_t,\varphi\rangle\}-\exp\{i\langle\bar{\mathcal{M}}^N_t,\varphi\rangle\})g(W)\bigg|\\\lesssim&\bigg[\mathbb{E}\bigg|\frac{\sqrt{2}}{\sqrt{N}}  \sum_{i = 1}^N \int^t_0 \nabla \varphi (X^N_{i}) \cdot
			\mathd B_s^i -\frac{\sqrt{2}}{\sqrt{N}}  \sum_{i = 1}^N \int^t_0 \nabla \varphi (\bar{X}_{i}) \cdot
			\mathd B_s^i \bigg|^2\bigg]^{\frac{1}{2}}
			\\\lesssim &\[\mathbb{E}\int_{0}^{t}|\nabla \varphi (X^N_{1})-\nabla \varphi (\bar{X}_{1})|^2\mathd s\]^{\frac{1}{2}}
		\end{align*}
		where the first inequality uses the fact that $g$ is bounded and H\"older's inequality, while the second inequality follows from Burkholder-Davis-Gundy's inequality and the symmetry of the law of $(X^N,\bar{X}^N).$ 
		Using Proposition \ref{prop:strong-convergence}, we have 
		\begin{align*}
			\lim\limits_{N\rightarrow \infty}\mathbb{E}\bigg|(\exp\{i\langle \mathcal{M}^N_t,\varphi\rangle\}-\exp\{i\langle\bar{\mathcal{M}}^N_t,\varphi\rangle\})g(W)\bigg|=0.
		\end{align*}
		Notice that $\{(\bar{X}_i,B_i)\}_{i\geq 1}$ is conditionally i.i.d. with respect to $\mathcal{F}^{W}_T,$ we thus have
		\begin{align*}
			&\lim\limits_{N\rightarrow \infty}\mathbb{E}\[\exp\{i\langle\bar{\mathcal{M}}^N_t,\varphi\rangle\}g(W)\]\\&=\lim\limits_{N\rightarrow \infty}\mathbb{E}\bigg[\mathbb{E}\bigg[\exp\bigg\{\frac{\sqrt{2}}{\sqrt{N}}\int_{0}^{t}\nabla \varphi (\bar{X}_{1}) \cdot
			\mathd B_s^1\bigg\} \mid \mathcal{F}^{W}_T\bigg]^Ng(W)\bigg]\\&=\mathbb{E}\bigg[\exp\bigg\{-\int_{0}^{t}\langle \mid  \nabla\varphi\mid^2 ,v_s\rangle\mathd s\bigg\} g(W)\bigg]=\tilde{\mathbb{E}}\bigg[\exp\bigg\{-\int_{0}^{t}\langle \mid  \nabla\varphi\mid^2,\tilde{v}_s\rangle\mathd s \bigg\}Z\bigg].
		\end{align*}
		where the second equality is a consequence of  central limit theorem \cite[Theorem 3.4.1]{durrett2019probability}. 
		By Lusin theorem, $Z$ can be extended to be any bounded $\mathcal{F}^{\tilde{W}}_T$ measureable random variables. The proof of the second identity is similar.
	\end{proof}
Before proceeding, let's recall the following abbreviation defined in   Corollary \ref{lemma uni mu2}, for $t\in[0,T]$  and $f\in L^2([0,T];H^4(\mathbb{T}^2)),$
\begin{align}\label{eqdef:R}
\mathcal{R}_t(f)=\frac{1}{m}\exp\bigg\{-\int_{0}^{t}m\|f_s\|^2_{H^4}\mathd s\bigg\},
\end{align} 
where the deterministic constant $m>1$   depends on $\|v_0\|_{L^2(\mathbb{T}^2)},$$\underset{x\in \mathbb{T}^2 }{\inf}v_0$ and $T.$ 
 
 To prepare the proof for the convergence for the  interacting term, 
 \begin{align*}
 	&\int^t_0\tilde{  \mathcal{K}}^N_s
 	(\varphi) - \langle \tilde{v}_s K \ast \tilde{\eta}_s + \tilde{\eta}_s K \ast
 	\tilde{v}_s, \nabla \varphi \rangle \mathd \nocomma s \xrightarrow{N\rightarrow\infty}0,
 \end{align*} we establish the following modified limit about fluctuation measures.
\begin{lemma}
	\label{prop converge}For every $\alpha > 1$, it holds that
	\begin{align}
		\begin{split}
			\tilde{\mathbb{E}} \int^T_0 \big\|	\mathcal{R}_T(\tilde{v}^N) \tilde{\eta}_t^N &-	\mathcal{R}_T(\tilde{v})\tilde{\eta}_t \big\|_{H^{- \alpha}} \mathd t
			\xrightarrow{N \rightarrow \infty} 0. \label{coro 1}
		\end{split}
	\end{align}
\end{lemma}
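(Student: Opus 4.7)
The plan is to combine the $\tilde{\mathbb{P}}$-almost sure convergence of $(\tilde{v}^N,\tilde{\eta}^N)\to(\tilde{v},\tilde{\eta})$ in $\mathcal{V}\times\mathcal{X}$ furnished by Proposition \ref{pro:skorokhod} with a uniform $L^2$ bound on the weighted integrand supplied by Corollary \ref{lemma uni mu2}, and to conclude by a Vitali-type argument.

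First, I will establish pointwise convergence. Since $\tilde{v}^N\to\tilde{v}$ in $L^2([0,T];H^4)$ $\tilde{\mathbb{P}}$-a.s., the scalar quantity $\int_0^T \|\tilde v_s^N\|_{H^4}^2 \mathd s$ converges $\tilde{\mathbb{P}}$-a.s.\ to $\int_0^T \|\tilde v_s\|_{H^4}^2 \mathd s$, and by continuity of the exponential $\mathcal{R}_T(\tilde v^N)\to \mathcal{R}_T(\tilde v)$ $\tilde{\mathbb{P}}$-a.s. On the other hand, $\tilde{\eta}^N\to\tilde{\eta}$ in $\mathcal{X}$ $\tilde{\mathbb{P}}$-a.s., and for every $\alpha>1$ the space $\mathcal{X}$ is continuously embedded in $L^2([0,T];H^{-\alpha})$ by choosing $k$ large enough in the definition. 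Combining these two facts and using that $\mathcal{R}_T$ is a deterministic scalar factor independent of $t$, one obtains
\begin{equation*}
\int_0^T \bigl\|\mathcal{R}_T(\tilde v^N)\tilde\eta_t^N - \mathcal{R}_T(\tilde v)\tilde\eta_t\bigr\|_{H^{-\alpha}}\mathd t \xrightarrow{N\to\infty} 0,\quad \tilde{\mathbb{P}}\text{-a.s.}
\end{equation*}
after splitting $\mathcal{R}_T(\tilde v^N)\tilde\eta^N - \mathcal{R}_T(\tilde v)\tilde\eta = \mathcal{R}_T(\tilde v^N)(\tilde\eta^N-\tilde\eta) + (\mathcal{R}_T(\tilde v^N)-\mathcal{R}_T(\tilde v))\tilde\eta$ and using $0\le\mathcal{R}_T(\tilde v^N)\le 1/m$.

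The second, and main, step is to upgrade this a.s.\ convergence to convergence in $L^1(\tilde{\mathbb{P}})$; the obstruction is that neither $\tilde\eta^N$ nor $\tilde\eta$ enjoys an obvious uniform bound on its own. The key observation is that the monotonicity $\mathcal{R}_T(f)\le \mathcal{R}_t(f)$ for every $t\in[0,T]$ allows us to replace the constant weight $\mathcal{R}_T$ by the $t$-dependent weight $\mathcal{R}_t$, which is exactly the weight appearing in the relative-entropy estimates. By Cauchy--Schwarz,
\begin{equation*}
\tilde{\mathbb{E}}\!\int_0^T \!\bigl\|\mathcal{R}_T(\tilde v^N)\tilde\eta_t^N\bigr\|_{H^{-\alpha}}^{2}\mathd t
\le \tilde{\mathbb{E}}\!\int_0^T\! \mathcal{R}_t(\tilde v^N)^2\,\|\tilde\eta_t^N\|_{H^{-\alpha}}^{2}\mathd t
\le \tilde{\mathbb{E}}\!\int_0^T\! \mathcal{R}_t(\tilde v^N)\,\|\tilde\eta_t^N\|_{H^{-\alpha}}^{2}\mathd t,
\end{equation*}
where in the last step I used $\mathcal{R}_t\le 1/m\le 1$. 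Since $\|\tilde\eta_t^N\|_{H^{-\alpha}}^2 = N\|\mu_N(t)-\tilde v_t^N\|_{H^{-\alpha}}^2$ and $(\tilde v^N,\tilde\eta^N)$ has the same law as $(v,\eta^N)$, part (1) of Corollary \ref{lemma uni mu2} gives
\begin{equation*}
\sup_N \tilde{\mathbb{E}}\!\int_0^T \!\bigl\|\mathcal{R}_T(\tilde v^N)\tilde\eta_t^N\bigr\|_{H^{-\alpha}}^{2}\mathd t
\le T\cdot \sup_N \sup_{t\in[0,T]} N\cdot\frac{C_\alpha}{N} = C_\alpha T.
\end{equation*}
Fatou's lemma yields the same bound for the limit $\mathcal{R}_T(\tilde v)\tilde\eta$. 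Therefore $\{\int_0^T\|\mathcal{R}_T(\tilde v^N)\tilde\eta_t^N-\mathcal{R}_T(\tilde v)\tilde\eta_t\|_{H^{-\alpha}}\mathd t\}_N$ is bounded in $L^2(\tilde{\mathbb{P}})$, hence uniformly integrable. Combined with the a.s.\ convergence from the first step, Vitali's convergence theorem delivers \eqref{coro 1}. The hard part of this argument is the passage from the $t$-independent weight $\mathcal{R}_T$ in the statement to the $t$-dependent weight $\mathcal{R}_t$ for which the relative-entropy moment bounds are available; this is exactly where the monotonicity $\mathcal{R}_T\le \mathcal{R}_t$ is crucial.
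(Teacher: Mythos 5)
Your proof is correct and follows essentially the same route as the paper's: establish $\tilde{\mathbb{P}}$-a.s.\ convergence of the weighted integrand from Proposition \ref{pro:skorokhod}, use the monotonicity $\mathcal{R}_T(\tilde{v}^N)\le\mathcal{R}_t(\tilde{v}^N)$ together with equality of laws and Corollary \ref{lemma uni mu2} to get a uniform $L^2(\tilde{\mathbb{P}})$ bound, and conclude by uniform integrability (Vitali). The paper packages the $L^2$ bound slightly differently via a Hölder inequality on the $\mathcal{R}_T(\tilde v)$-weighted quantity and Fatou for the limit, but the key ingredients and their roles are identical.
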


\begin{proof}
	By H\"older's inequality, we have $$\tilde{\mathbb{E}}\[\int_0^T  	\mathcal{R}_T(\tilde{v})\|\tilde{\eta}_t \|_{H^{- \alpha}} \mathd t\]^2
		\leq \sqrt{T}\[\tilde{\mathbb{E}}\int_0^T  	\mathcal{R}^2_T(\tilde{v})\|\tilde{\eta}_t \|^2_{H^{- \alpha}} \mathd t\]^{\frac{1}{2}}.$$
Recall the definitions about $\mathcal{R}_T(\tilde{v}^N),\mathcal{R}_T(\tilde{v})$ in \eqref{eqdef:R}	and notice that $\mathcal{R}^2_T(\tilde{v}^N)\leq\mathcal{R}_T(\tilde{v}^N)\leq\mathcal{R}_t(\tilde{v}^N),$ we then infer that $\forall \alpha > 1,$
	\begin{align*}
		\tilde{\mathbb{E}}\int_0^T  	\mathcal{R}^2_T(\tilde{v})\|\tilde{\eta}_t \|^2_{H^{- \alpha}} \mathd t&\leq 	\sup_N\tilde{\mathbb{E}}\int_0^T 	\mathcal{R}_T(\tilde{v}^N)\|  \tilde{\eta}_t^N \|^2_{H^{- \alpha}} \mathd t\\&\leqslant
		T\sup_{t \in [0, T]} \sup_N \tilde{\mathbb{E}} \[	\mathcal{R}_T(\tilde{v}^N)\|\tilde{\eta}_t^N \|^2_{H^{- \alpha}}\] 
		\\&\leq T\sup_{t \in [0, T]} \sup_N \mathbb{E} \[	\mathcal{R}_t(v)\|\eta_t^N \|^2_{H^{- \alpha}}\] <
		\infty, \nonumber
	\end{align*}
	where we used Corollary \ref{lemma uni mu2} to get the last inequality.
	
	The above content provides the uniform  integrability of \begin{align*}
	\int_{0}^{T}\big\| \mathcal{R}_T(\tilde{v}^N)\tilde{\eta}_t^N& - \mathcal{R}_T(\tilde{v})\tilde{\eta}_t \big\|_{H^{- \alpha}}\mathd t.
	\end{align*}
	Thus  the  convergence  \begin{align*}
	\int_{0}^{T}\big\|\mathcal{R}_T(\tilde{v}^N)\tilde{\eta}_t^N - \mathcal{R}_T(\tilde{v})\tilde{\eta}_t \big\|_{H^{- \alpha}}\mathd t\xrightarrow{N \rightarrow \infty}0,\quad \tilde{\mathbb{P}}-a.s.
	\end{align*}
	 leads to {\eqref{coro 1}}.
\end{proof}

We now deal with the interacting term.

\begin{lemma}
	\label{lemma limit no}For each $\varphi \in C^{\infty} (\mathbb{T}^2)$, it
	holds that
	\begin{align}
		\tilde{\mathbb{E}} \bigg( \sup_{t \in [0, T]} &\bigg| \int^t_0 \mathcal{R}_T(\tilde{v}^N)\tilde{  \mathcal{K}}^N_s
		(\varphi) - \mathcal{R}_T(\tilde{v})\langle \tilde{v}_s K \ast \tilde{\eta}_s + \tilde{\eta}_s K \ast
		\tilde{v}_s, \nabla \varphi \rangle \mathd \nocomma s \bigg|^{\frac{1}{2}} \bigg)\xrightarrow{N \rightarrow \infty} 0, \nonumber
	\end{align}
	where \begin{align*}
		\tilde{  \mathcal{K}}_t^N (\varphi)&  \assign  \sqrt{N}  \langle \nabla \varphi, K \ast
		\tilde{\mu}_N (t) \tilde{\mu}_N (t) \rangle - \sqrt{N}  \langle \nabla \varphi, \tilde{v}^N _t K
		\ast \tilde{v}^N _t  \rangle ,
	\end{align*}
	and 
	\begin{align*}
		\tilde{\mu}_N& \assign \frac{1}{\sqrt{N}}\tilde{\eta}^N+\tilde{v}^N .
	\end{align*}
Moreover, there exists a subsequence $\{N_k\}_{k\geq 1}$ ( still denoted by $\{N\}_{N\in \mathbb{N}}$  for simplicity) such that 
\begin{align*}
	&\int^t_0\tilde{  \mathcal{K}}^N_s
	(\varphi) - \langle \tilde{v}_s K \ast \tilde{\eta}_s + \tilde{\eta}_s K \ast
	\tilde{v}_s, \nabla \varphi \rangle \mathd \nocomma s \xrightarrow{N\rightarrow\infty}0,\quad \forall t\in [0,T],\quad \tilde{	\mathbb{P}}-a.s..
\end{align*}
\end{lemma}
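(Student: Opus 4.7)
My plan is to algebraically separate the linear-in-$\tilde\eta^N$ contribution to $\tilde{\mathcal{K}}^N_s(\varphi)$, dispose of the remaining quadratic piece via the weighted bound in Corollary~\ref{lemma uni mu2}(3), and pass to the limit in the linear contributions by coupling Lemma~\ref{prop converge} with the $\mathcal{V}$-a.s.\ convergence $\tilde v^N\to\tilde v$ furnished by Proposition~\ref{pro:skorokhod}. Two preliminary reductions streamline the argument. The pointwise bound $\sup_t|\int_0^t f(s)\,\mathd s|\leq \int_0^T|f(s)|\,\mathd s$ together with Jensen's inequality reduces the first claim to the unweighted $L^1$-convergence $\tilde{\mathbb{E}}\int_0^T|f_N(s)|\,\mathd s\to 0$, where $f_N(s)$ denotes the integrand inside the absolute value. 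The subsequence statement then follows by extracting an a.s.-convergent subsequence from this $L^1$-limit and dividing through by the $s$-independent random factor $\mathcal{R}_T(\tilde v^N)$, which is a.s.\ strictly positive and converges a.s.\ to the strictly positive $\mathcal{R}_T(\tilde v)$ because $\tilde v^N\to\tilde v$ in $\mathcal{V}$ forces $\int_0^T\|\tilde v^N_s\|_{H^4}^2\,\mathd s\to\int_0^T\|\tilde v_s\|_{H^4}^2\,\mathd s$ a.s.

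Substituting $\tilde\mu_N=\tilde v^N+N^{-1/2}\tilde\eta^N$ into the definition of $\tilde{\mathcal{K}}^N_s(\varphi)$ yields the decomposition
\begin{align*}
\tilde{\mathcal{K}}^N_s(\varphi)=\langle\nabla\varphi,\tilde v^N_s K\ast\tilde\eta^N_s+\tilde\eta^N_s K\ast\tilde v^N_s\rangle+\frac{1}{\sqrt N}\langle\nabla\varphi,\tilde\eta^N_s K\ast\tilde\eta^N_s\rangle.
\end{align*}
The quadratic residue rewrites as $\sqrt N\langle\nabla\varphi\cdot K\ast(\tilde\mu_N-\tilde v^N),\tilde\mu_N-\tilde v^N\rangle$; together with $\mathcal{R}_T(\tilde v^N)\leq\mathcal{R}_s(\tilde v^N)$ for $s\leq T$ and the equality in law from Proposition~\ref{pro:skorokhod}(1), Corollary~\ref{lemma uni mu2}(3) then bounds its weighted $L^1(\tilde\Omega\times[0,T])$-norm by $CT/\sqrt N$, which tends to $0$.

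For the linear piece I treat the representative term $\mathcal{R}_T(\tilde v^N)\langle\nabla\varphi,\tilde v^N_s K\ast\tilde\eta^N_s\rangle$; the symmetric term is analogous. By $L^2$-duality for convolution, this equals $\mathcal{R}_T(\tilde v^N)\langle\psi^N_s,\tilde\eta^N_s\rangle$ for a scalar multiplier $\psi^N_s$ built linearly from $\nabla\varphi$, $\tilde v^N_s$, and the transposed kernel of $K$; define $\psi_s$ analogously from $\tilde v_s$. I then telescope
\begin{align*}
\mathcal{R}_T(\tilde v^N)\langle\psi^N_s,\tilde\eta^N_s\rangle-\mathcal{R}_T(\tilde v)\langle\psi_s,\tilde\eta_s\rangle=\langle\psi^N_s-\psi_s,\mathcal{R}_T(\tilde v^N)\tilde\eta^N_s\rangle+\langle\psi_s,\mathcal{R}_T(\tilde v^N)\tilde\eta^N_s-\mathcal{R}_T(\tilde v)\tilde\eta_s\rangle.
\end{align*}
Fixing $\alpha>1$ and selecting appropriate Besov exponents, Lemmas~\ref{lemma:infity}--\ref{lemma convolution} together with the uniform $L^\infty$-bound on $\tilde v^N_s$ inherited from \eqref{regu-1} yield $\psi^N,\psi\in L^\infty([0,T];H^{\alpha})$ a.s., and $\psi^N\to\psi$ in $L^2([0,T];H^{\alpha})$ a.s.\ via $\tilde v^N\to\tilde v$ in $\mathcal{V}$. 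The first telescope piece then vanishes in $\tilde{\mathbb{E}}$-mean after Cauchy--Schwarz against the uniform bound $\tilde{\mathbb{E}}\int_0^T\|\mathcal{R}_T(\tilde v^N)\tilde\eta^N_s\|_{H^{-\alpha}}^2\,\mathd s\leq C_\alpha$ of Corollary~\ref{lemma uni mu2}(1) and a uniform-integrability argument; the second telescope piece is controlled directly by Lemma~\ref{prop converge} paired in $H^{\alpha}\times H^{-\alpha}$ via Lemma~\ref{lemma triebel}(ii).

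The main obstacle I anticipate is the first telescope piece: upgrading $\tilde v^N\to\tilde v$ in $\mathcal{V}$ to convergence of the multiplier $\psi^N\to\psi$ in $L^2([0,T];H^\alpha)$ with a Besov exponent high enough to dualize against the weighted fluctuations in $H^{-\alpha}$, despite the distributional nature of $K$. The correct bookkeeping mirrors the $\sigma=0$ treatment in Wang--Zhao--Zhu \cite{wang2023gaussian}, and once executed it combines with the quadratic estimate to deliver the first claim; the pointwise-in-$t$ subsequence statement then follows by the standard Borel--Cantelli extraction outlined above, with the weight $\mathcal{R}_T(\tilde v^{N_k})$ divided out a.s.
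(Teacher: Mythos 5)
Your proposal is correct and follows essentially the same route as the paper's proof: the same expansion $\tilde\mu_N=\tilde v^N+N^{-1/2}\tilde\eta^N$ isolates the quadratic residue (disposed of via Corollary~\ref{lemma uni mu2}(3)), the linear piece is telescoped into a $\psi^N-\psi$ contribution and a $\mathcal{R}_T(\tilde v^N)\tilde\eta^N-\mathcal{R}_T(\tilde v)\tilde\eta$ contribution (the paper's $H_3,H_4$ and $H_1,H_2$ respectively), the Besov convolution/product estimates supply the $H^\alpha\times H^{-\alpha}$ duality, and the subsequence claim follows by extracting an a.s.-convergent subsequence and dividing out the a.s.-convergent positive weight $\mathcal{R}_T(\tilde v^N)$. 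The only cosmetic difference is that you invoke the $L^\infty$ bound from \eqref{regu-1} where the paper instead uses the Sobolev embedding $H^2\hookrightarrow L^\infty$; both work.
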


\begin{proof} Notice that 
	\begin{align*}
	\sqrt{N} \big(\tilde\mu_N K * \tilde\mu_N - \tilde{v}^N K * \tilde{v}^N  \big) &= \tilde{v}^N K * \tilde{\eta}^N + \tilde{\eta}^N K * \tilde{v}^N + \frac{1}{\sqrt{N}} \,  \tilde{\eta}^N K * \tilde{\eta}^N.
	\end{align*}
	Consequently,  for each $\varphi \in C^{\infty}$, we have
	\begin{align}
		&\sup_{t \in [0, T]} \big| \int^t_0 \mathcal{R}_T(\tilde{v}^N) \tilde{\mathcal{K}}^N_s (\varphi)- \mathcal{R}_T(\tilde{v})\langle
		\tilde{v}_s K \ast \tilde{\eta}_s + \tilde{\eta}_s K \ast \tilde{v}_s, \nabla \varphi
		\rangle \mathd \nocomma s \bigg|^{\frac{1}{2}}\nonumber\\& \leqslant \sqrt{J_1^N} (\varphi) + \sqrt{J_2^N}
		(\varphi), \label{limit no 1}
	\end{align}
	where
	
	\begin{align}
		J_1^N (\varphi) \assign & \sqrt{N} \mathcal{R}_T(\tilde{v}^N) \int^T_0 \big| \big\langle \nabla \varphi K \ast
		(\tilde{\mu}_N (t) - \tilde{v}^N_t), \tilde{\mu}_N (t) - \tilde{v}^N_t \big\rangle \big| \mathd t,
		\nonumber\\
		J_2^N (\varphi) \assign& \int^T_0 \big| \mathcal{R}_T(\tilde{v}^N) \big\langle \tilde{v}^N_t K \ast \tilde{\eta}^N_t +
		\tilde{\eta}^N_t K \ast \tilde{v}^N_t, \nabla \varphi \big\rangle 
		\\&-\mathcal{R}_T(\tilde{v}) \big\langle
		\tilde{v}_t K \ast \tilde{\eta}_t + \tilde{\eta}_t K \ast \tilde{v}_t, \nabla \varphi
		\big\rangle \big| \mathd t. \nonumber
	\end{align}
	On one hand, Lemma \ref{lemma uni mu2} yields the following estimate
	\begin{align*}
		\tilde{\mathbb{E}}J_1^N (\varphi) & \leqslant T \nocomma \sqrt{N} \sup_{t \in [0,
			T]} \tilde{\mathbb{E}} \big|\mathcal{R}_T(\tilde{v}^N) \langle \nabla \varphi K \ast (\tilde{\mu}_N (t) - \tilde{v}^N_t),
		\tilde{\mu}_N (t) - \tilde{v}^N_t \rangle \big|
		\\ &= T \sqrt{N} \sup_{t \in [0,
			T]} {\mathbb{E}} \big|\mathcal{R}_T(v) \langle \nabla \varphi K \ast ({\mu}_N (t) - v_t),
		{\mu}_N (t) - v_t \rangle\big|  
		\\&\leq T \sqrt{N} \sup_{t \in [0,
			T]} {\mathbb{E}} \big|\mathcal{R}_t(v) \langle \nabla \varphi K \ast ({\mu}_N (t) - v_t),
		{\mu}_N (t) - v_t \rangle\big|  \lesssim N^{- \frac{1}{2}} \xrightarrow{N \rightarrow \infty}
		0.
	\end{align*}
	With H\"older's inequality, we infer
$	\tilde{\mathbb{E}}\sqrt{J_1^N (\varphi)}\leq\[\tilde{\mathbb{E}}(J_1^N (\varphi))\]^{\frac{1}{2}}\xrightarrow{N \rightarrow \infty}0.$
	On the other hand, we have
	\begin{align}
		\tilde{\mathbb{E}}\sqrt{J_2^N (\varphi)} &\leqslant \tilde{\mathbb{E}} \[\sum_{i=1}^{4}\int^T_0 H_i 
		\mathd t\]^{\frac{1}{2}}. \label{limit no2}
	\end{align}
	where
	\begin{align*}
		H_1\assign& \big| \big\langle
		\tilde{v}_t K \ast \big(\mathcal{R}_T(\tilde{v}^N)\tilde{\eta}^N_t -  \mathcal{R}_T(\tilde{v})\tilde{\eta}_t\big), \nabla \varphi \big\rangle \big|,
		\\H_2\assign&\big|
		\big\langle \big(\mathcal{R}_T(\tilde{v}^N)\tilde{\eta}^N_t -  \mathcal{R}_T(\tilde{v})\tilde{\eta}_t\big) K \ast \tilde{v}_t, \nabla \varphi \big\rangle \big|,
		\\H_3\assign&\mathcal{R}_T(\tilde{v}^N)\big| \big\langle
		(\tilde{v}^N_t-\tilde{v}_t) K \ast \tilde{\eta}^N_t , \nabla \varphi \big\rangle \big|,
		\\H_4\assign&\mathcal{R}_T(\tilde{v}^N) \big|
		\big\langle \tilde{\eta}^N_t K \ast (\tilde{v}^N_t-\tilde{v}_t), \nabla \varphi \big\rangle \big|.
	\end{align*}

	For each $t \in [0, T]$, we deduce that for every $\alpha> 1$ that
	\begin{align*}
		H_1=& \big| \big\langle K(-\cdot) \ast (\tilde{v}_t \nabla \varphi), \mathcal{R}_T(\tilde{v}^N)\tilde{\eta}^N_t -  \mathcal{R}_T(\tilde{v})\tilde{\eta}_t \big\rangle \big|\\ \leqslant& \big\| \mathcal{R}_T(\tilde{v}^N)\tilde{\eta}^N_t -  \mathcal{R}_T(\tilde{v})\tilde{\eta}_t \big\|_{H^{- \alpha}}  \|
		K(-\cdot) \ast (\tilde{v}_t \nabla \varphi)\|_{H^{\alpha}}, \nonumber
	\end{align*}
	where
	\begin{align}
		K(-\cdot) * g (x) := \int K(y-x) g(y) \mathd y.\label{notation:k}
	\end{align}
	Recall that $K\lesssim\frac{1}{|x|}\in L^1(\mathbb{T}^2),$ 
	we apply 
	Lemma \ref{lemma convolution} with $p = p_1 = q = 2$ and Lemma \ref{lemma:infity} to get 
	\begin{align*}
		H_1& \lesssim \big\| \mathcal{R}_T(\tilde{v}^N)\tilde{\eta}^N_t -  \mathcal{R}_T(\tilde{v})\tilde{\eta}_t\big\|_{H^{- \alpha}}
		\cdot \|K\|_{L^1}  (\|
		\tilde{v}_t \|_{H^{\alpha}} \| \nabla \varphi \|_{L^{\infty}} +\|
		\tilde{v}_t \|_{L^{\infty}} \| \nabla \varphi \|_{H^{\alpha}}),
		\\ &\lesssim \big\| \mathcal{R}_T(\tilde{v}^N)\tilde{\eta}^N_t -  \mathcal{R}_T(\tilde{v})\tilde{\eta}_t\big\|_{H^{- \alpha}}
		\cdot \|K\|_{L^1}  (\|
		\tilde{v}_t \|_{H^{\alpha}} \| \nabla \varphi \|_{L^{\infty}} +\|
		\tilde{v}_t \|_{H^2} \| \nabla \varphi \|_{H^{\alpha}}),
		\nonumber
	\end{align*}
	where we use the Sobolev embedding theorem to get the last inequality.
	
		Similarly, we have 	\begin{align*}
		H_2\leqslant& \big\|\mathcal{R}_T(\tilde{v}^N)\tilde{\eta}^N_t-  \mathcal{R}_T(\tilde{v})\tilde{\eta}_t\big\|_{H^{- \alpha}}  \| \nabla \varphi
		\cdummy K \ast \tilde{v}_t \|_{H^{\alpha}}\nonumber
		\\\lesssim& \big\| \mathcal{R}_T(\tilde{v}^N)\tilde{\eta}^N_t -  \mathcal{R}_T(\tilde{v})\tilde{\eta}_t\big\|_{H^{- \alpha}}
		\cdot \|K\|_{L^1}  (\|
		\tilde{v}_t \|_{H^{\alpha}} \| \nabla \varphi \|_{L^{\infty}} +\|
		\tilde{v}_t \|_{H^{2}} \| \nabla \varphi \|_{H^{\alpha}}),
		\nonumber
	\end{align*}
	\begin{align*}
		H_3&=\mathcal{R}_T(\tilde{v}^N)\big| \big\langle K(-\cdot) \ast ((\tilde{v}^N_t-\tilde{v}_t) \nabla \varphi), \tilde{\eta}^N_t \big\rangle \big| 
		\\&\leqslant \mathcal{R}_T(\tilde{v}^N)\| \tilde{\eta}^N_t \|_{H^{- \alpha}}  \|
		K(-\cdot) \ast ((\tilde{v}^N_t-\tilde{v}_t) \nabla \varphi)\|_{H^{\alpha}}, 
		\\&\lesssim \mathcal{R}_T(\tilde{v}^N)\| \tilde{\eta}^N_t \|_{H^{- \alpha}} \cdot\|K\|_{L^1}  (\|
		\tilde{v}^N_t-\tilde{v}_t\|_{H^{\alpha}} \| \nabla \varphi \|_{L^{\infty}} +\|
		\tilde{v}^N_t-\tilde{v}_t \|_{H^{2}} \| \nabla \varphi \|_{H^{\alpha}}),
	\end{align*}
	and
	\begin{align*}
		H_4&\leq\mathcal{R}_T(\tilde{v}^N)\| \tilde{\eta}^N_t  \|_{H^{- \alpha}}  \| \nabla \varphi
		\cdummy K \ast (\tilde{v}^N_t-\tilde{v}_t) \|_{H^{\alpha}}.
		\\&\lesssim  \mathcal{R}_T(\tilde{v}^N)\| \tilde{\eta}^N_t \|_{H^{- \alpha}} \cdot\|K\|_{L^1}  (\|
		\tilde{v}^N_t-\tilde{v}_t\|_{H^{\alpha}} \| \nabla \varphi \|_{L^{\infty}} +\|
		\tilde{v}^N_t-\tilde{v}_t \|_{H^{2}} \| \nabla \varphi \|_{H^{\alpha}}) .
	\end{align*}

	Next, we substitute these estimates into equation {\eqref{limit no2}} with $\alpha=2$  and apply H\"older's inequality multiple times to obtain
	\begin{align}\label{eqt:interact2}
	\begin{split}
			\tilde{	\mathbb{E}}\sqrt{J_2^N (\varphi)} &\lesssim_{\varphi}\|K\|_{L^1}^{\frac{1}{2}}\tilde{	\mathbb{E}}\[\int_{0}^{T} \mathcal{R}_T(\tilde{v}^N)\| \tilde{\eta}^N_t \|_{H^{- 2}} \|\tilde{v}^N_t-\tilde{v}_t \|_{H^{2}} \mathd t\]^{\frac{1}{2}}
		\\&+\|K\|_{L^1}^{\frac{1}{2}}\tilde{	\mathbb{E}}\[\sup_{t \in [0, T]} \| \tilde{
			v}_t \|_{H^{2}}\int^T_0\big \| \mathcal{R}_T(\tilde{v}^N)\tilde{\eta}^N_t -  \mathcal{R}_T(\tilde{v})\tilde{\eta}_t\|_{H^{- 2}} \mathd t\]^{\frac{1}{2}}
		\\&\lesssim_{\varphi} \|K\|_{L^1}^{\frac{1}{2}}\tilde{	\mathbb{E}}\(\[\int_{0}^{T} \mathcal{R}_T(\tilde{v}^N)\| \tilde{\eta}^N_t \|^2_{H^{- 2}}\mathd t\]^{\frac{1}{4}} \[\int_{0}^{T}\|\tilde{v}^N_t-\tilde{v}_t \|^2_{H^{2}} \mathd t\]^{\frac{1}{4}}\)
		\\&+\|K\|_{L^1}^{\frac{1}{2}}
		\[\tilde{	\mathbb{E}}	{\sup_{t \in [0, T]} \| \tilde{
				v}_t \|_{H^{2}}}\]^{\frac{1}{2}}
		\[\tilde{	\mathbb{E}} \int^T_0 \big\| \mathcal{R}_T(\tilde{v}^N)\tilde{\eta}^N_t -  \mathcal{R}_T(\tilde{v})\tilde{\eta}_t\big\|_{H^{- 2}} \mathd t\]^{\frac{1}{2}}
		\\&\lesssim_{\varphi}\|K\|_{L^1}^{\frac{1}{2}}\[T\sup_{t\in[0,T]}\tilde{	\mathbb{E}} \mathcal{R}_T(\tilde{v}^N)\| \tilde{\eta}^N_t \|^2_{H^{- 2}}\]^{\frac{1}{4}} \[\tilde{	\mathbb{E}}\[\int_{0}^{T}\|\tilde{v}^N_t-\tilde{v}_t \|^2_{H^{2}} \mathd t\]^{\frac{1}{2}}\]^{\frac{1}{2}}
		\\&+ \|K\|_{L^1}^{\frac{1}{2}}
		\[\tilde{	\mathbb{E}}	{\sup_{t \in [0, T]} \| \tilde{
				v}_t \|_{H^{2}}}\]^{\frac{1}{2}}
		\[\tilde{	\mathbb{E}} \int^T_0 \big\| \mathcal{R}_T(\tilde{v}^N)\tilde{\eta}^N_t -  \mathcal{R}_T(\tilde{v})\tilde{\eta}_t\big\|_{H^{- 2}} \mathd t\]^{\frac{1}{2}}.
	\end{split}
	\end{align}
	Observe that
	\begin{align*}
		\tilde{	\mathbb{E}}\[\int_{0}^{T}\|\tilde{v}_t \|^2_{H^{2}} \mathd t\]\lesssim \sup_{N}\tilde{	\mathbb{E}}\[\int_{0}^{T}\|\tilde{v}^N_t\|^2_{H^{2}} \mathd t\]=\mathbb{E}\[\int_{0}^{T}\|v_t\|^2_{H^{2}} \mathd t\]<\infty,
	\end{align*}
	which establishes the uniform integrability of $$\[\int_{0}^{T}\|\tilde{v}^N_t-\tilde{v}_t \|^2_{H^{2}} \mathd t\]^{\frac{1}{2}}.$$ 
	Therefore,  the  convergence of $$\int_{0}^{T}\|\tilde{v}^N_t-\tilde{v}_t \|^2_{H^{2}} \mathd t,\quad \tilde{\mathbb{P}}-a.s.$$  together with Corollary \ref{lemma uni mu2}, leads to the convergence of the first term in \eqref{eqt:interact2}
	to $0.$ 
	Since $\sup_{t \in [0, T]} \| \cdot \|_{H^{2}}$ is lower semi-continuous on		$\mathcal{V}= C([0,T];L^2(\mathbb{T}^2))\cap L^2([0,T];H^4(\mathbb{T}^2)),$
	by Fatou's Lemma,	we have \begin{align}\label{eq:v-u-beta'}
		\tilde{	\mathbb{E}}	{\sup_{t \in [0, T]} \| \tilde{
				v}_t \|_{H^{2}}}\leq\liminf_{N\rightarrow\infty}	\tilde{	\mathbb{E}}	{\sup_{t \in [0, T]} \| \tilde{
				v}^N_t \|_{H^{2}}}=\mathbb{E}\sup_{t \in [0, T]} \| 
		v_t \|_{H^{2}}<\infty.
	\end{align}
	Thus, the second term in \eqref{eqt:interact2} converges to $0$ by Lemma \ref{prop converge},
	completing the proof of the first claim.  
	
	Furthermore, the first claim implies that there exists a subsequence $\{N_k\}_{k\geq 1}$ ( still denoted by $\{N\}_{N\in \mathbb{N}}$  for simplicity) such that 
	
	\begin{align*}
		&\int^t_0\mathcal{R}_T(\tilde{v}^N)\tilde{  \mathcal{K}}^N_s
		(\varphi) - \mathcal{R}_T(\tilde{v})\langle \tilde{v}_s K \ast \tilde{\eta}_s + \tilde{\eta}_s K \ast
		\tilde{v}_s, \nabla \varphi \rangle \mathd \nocomma s \xrightarrow{N\rightarrow\infty}0,\quad \forall t\in [0,T],\quad \tilde{	\mathbb{P}}-a.s..
	\end{align*}
	Since	$\mathcal{R}^{-1}_T(\tilde{v}^N)=m\exp\{\int_{0}^{T}m\|\tilde{v}^N_s\|^2_{H^4}\mathd s\}$ converges to $\mathcal{R}^{-1}_T(\tilde{v})=m\exp\{\int_{0}^{T}m\|\tilde{v}_s\|^2_{H^4}\mathd s\}, \tilde{\mathbb{P}}$-a.s.,  we then conclude that 
	\begin{align*}
		\int^t_0\tilde{  \mathcal{K}}^N_s
		(\varphi) - \langle \tilde{v}_s K \ast \tilde{\eta}_s + \tilde{\eta}_s K \ast
		\tilde{v}_s, \nabla \varphi \rangle \mathd \nocomma s &\xrightarrow{N\rightarrow\infty}0,\quad \forall t\in [0,T],\quad \tilde{	\mathbb{P}}-a.s..
	\end{align*}
	This completes the proof.
\end{proof}
	Now,  we are in the position to establish the following  existence result, using the  martingale approach  as in \cite{hofmanova2017weak,dabrock2021existence}.  
	\begin{theorem}\label{thm:existence}
	$\big(\tilde{\Omega}, \tilde{\mathcal{F}},(\tilde{\mathcal{F}}_t)_{t\in[0,T]},
	\tilde{\mathbb{P}},\(\tilde{\eta}_t,\tilde{\mathcal{M}}_t,\tilde{W}_t\)_{t\in[0,T]}\big)$  	 is a probabilistically weak solution to the fluctuation SPDE \eqref{LimitSpde}.
	\end{theorem}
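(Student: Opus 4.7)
The plan is to verify the four clauses of Definition \ref{def:limit spde} for $(\tilde{\eta}, \tilde{\mathcal{M}}, \tilde{W})$ on $(\tilde{\Omega}, \tilde{\mathcal{F}}, (\tilde{\mathcal{F}}_t)_{t\in[0,T]}, \tilde{\mathbb{P}})$. Three clauses come almost for free. That $\tilde{W}$ is a one-dimensional $(\tilde{\mathcal{F}}_t)$-Brownian motion follows by passing L\'evy's characterization from $\tilde{W}^N$, which is Brownian because it shares the law of the original $W$, to the limit using the almost-sure $\mathcal{W}$-convergence of Proposition \ref{pro:skorokhod} and a standard density argument for bounded continuous functionals measurable up to time $s$. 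The conditional law of $\tilde{\mathcal{M}}$ given $\mathcal{F}_T^{\tilde{W}}$ is exactly Lemma \ref{lem:unilaw}, and the regularities $\tilde{\eta}\in L^2([0,T];H^{-\alpha})\cap C([0,T];H^{-\alpha-2})$ for every $\alpha>1$ and $\tilde{\mathcal{M}}\in C([0,T];H^{-\alpha})$ for every $\alpha>2$ are built into $\mathcal{X}$ and $\mathcal{Y}$.

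The bulk of the work is the weak identity \eqref{eqt-limitspde}. Since $(\tilde{v}^N,\tilde{\eta}^N,\tilde{\mathcal{M}}^N,\tilde{W}^N)$ has the same joint law as $(v,\eta^N,\mathcal{M}^N,W)$, the particle identity \eqref{spde-etaN} is valid $\tilde{\mathbb{P}}$-almost surely on the new probability space. Fixing $\varphi\in C^\infty(\mathbb{T}^2)$, the initial-value, Laplacian, Stratonovich-correction and additive-noise terms all pass to the limit along a subsequence from the almost-sure $\mathcal{H}$-convergence (Lemma \ref{lemma triebel} handles the boundedness of $\sigma\cdot\nabla(\sigma\cdot\nabla\cdot)$ on $H^{-\alpha}$, and the $\mathcal{Y}$-convergence controls $\langle\tilde{\mathcal{M}}^N_t,\varphi\rangle$). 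The singular interaction $\int_0^t \tilde{\mathcal{K}}^N_s(\varphi)\,\mathd s$ converges almost surely, along the further subsequence extracted in the second conclusion of Lemma \ref{lemma limit no}, to $\int_0^t\langle\nabla\varphi,\tilde{v}_s K\ast\tilde{\eta}_s+\tilde{\eta}_s K\ast\tilde{v}_s\rangle\,\mathd s$.

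The remaining and delicate term is the transport-noise It\^o integral $\int_0^t \langle\sigma\cdot\nabla\varphi,\tilde{\eta}^N_s\rangle\,\mathd\tilde{W}^N_s$; this is where I expect the main obstacle to lie, since a.s.\ convergence of the integrand in a negative Sobolev space is not enough to pass the integral. The plan is to invoke the classical martingale identification argument. Set
\begin{align*}
R^\varphi_t \assign \langle \tilde{\eta}_t - \tilde{\eta}_0, \varphi\rangle & - \int_0^t \langle \Delta\varphi, \tilde{\eta}_s\rangle\,\mathd s - \int_0^t \langle\nabla\varphi, \tilde{v}_s K\ast\tilde{\eta}_s + \tilde{\eta}_s K\ast\tilde{v}_s\rangle\,\mathd s \\
& - \tfrac{1}{2}\int_0^t \langle \sigma\cdot\nabla(\sigma\cdot\nabla\varphi),\tilde{\eta}_s\rangle\,\mathd s - \langle \tilde{\mathcal{M}}_t,\varphi\rangle,
\end{align*}
and define $R^{\varphi,N}_t$ analogously from $(\tilde{v}^N,\tilde{\eta}^N,\tilde{\mathcal{M}}^N)$; by \eqref{spde-etaN}, $R^{\varphi,N}_t=\int_0^t\langle\sigma\cdot\nabla\varphi,\tilde{\eta}^N_s\rangle\,\mathd\tilde{W}^N_s$. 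For any $s<t$ and any bounded continuous $\Phi$ on path space measurable up to time $s$, the martingale and quadratic-variation structure of the It\^o integral yields the three identities
\begin{align*}
\tilde{\mathbb{E}}\big[(R^{\varphi,N}_t-R^{\varphi,N}_s)\Phi\big] & = 0,\\
\tilde{\mathbb{E}}\Big[\Big((R^{\varphi,N}_t)^2-(R^{\varphi,N}_s)^2-\textstyle\int_s^t|\langle\sigma\cdot\nabla\varphi,\tilde{\eta}^N_r\rangle|^2\mathd r\Big)\Phi\Big] & = 0,\\
\tilde{\mathbb{E}}\Big[\Big(R^{\varphi,N}_t\tilde{W}^N_t-R^{\varphi,N}_s\tilde{W}^N_s-\textstyle\int_s^t\langle\sigma\cdot\nabla\varphi,\tilde{\eta}^N_r\rangle\,\mathd r\Big)\Phi\Big] & = 0.
\end{align*}
Uniform integrability sufficient to pass these identities to $N\to\infty$ is provided by Corollary \ref{lemma uni mu2} together with the localization by the stopping times $\tau_R, \tau^N_{\alpha,M,R}$ of Section \ref{sec:compactness}; the random weight $\mathcal{R}_T(\tilde{v}^N)$ can then be removed along a further subsequence using $\mathcal{R}_T(\tilde{v})>0$ almost surely, together with a Vitali-type argument to justify passage in the squared and bilinear expressions. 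The limiting identities characterize $R^\varphi$ as a continuous $(\tilde{\mathcal{F}}_t)$-martingale with predictable quadratic variation $\int_0^t|\langle\sigma\cdot\nabla\varphi,\tilde{\eta}_s\rangle|^2\,\mathd s$ and cross-variation $\int_0^t\langle\sigma\cdot\nabla\varphi,\tilde{\eta}_s\rangle\,\mathd s$ with $\tilde{W}$, whence the martingale representation theorem identifies $R^\varphi_t=\int_0^t\langle\sigma\cdot\nabla\varphi,\tilde{\eta}_s\rangle\,\mathd\tilde{W}_s$. Rearranging yields \eqref{eqt-limitspde} and completes the verification.
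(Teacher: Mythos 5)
Your plan follows the paper's proof of Theorem~\ref{thm:existence} essentially step for step: your $R^\varphi$ is the paper's $\tilde Z$, the three martingale, quadratic-variation and cross-variation identities are exactly Proposition~\ref{prop:transport}, the interaction term is handled by the $\tilde{\mathbb{P}}$-a.s.\ subsequential convergence from Lemma~\ref{lemma limit no}, and the conclusion via zero quadratic variation of $R^\varphi-\int_0^\cdot\langle\sigma\cdot\nabla\varphi,\tilde\eta_s\rangle\,\mathd\tilde W_s$ is the paper's final computation. One detail is off: the paper does not reuse the Section~\ref{sec:compactness} stopping times $\tau_R,\tau^N_{\alpha,M,R}$ or Corollary~\ref{lemma uni mu2} for the martingale-identity passage, but instead introduces path-functional stopping times $\vartheta_{M_n}(\tilde I^N)$ (\`a la Hofmanov\'a), chosen along a sequence $M_n$ at which $\vartheta_{M_n}$ is a.s.\ continuous so the stopped processes converge, and obtains the needed uniform integrability from a fourth-moment BDG bound on the stopped It\^o integral rather than from the $\mathcal R_T$-weighted second-moment estimates, which play a role only in Lemma~\ref{lemma limit no}.
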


Firstly,	we establish the convergence  of the drift terms. As a consequence of Proposition \ref{pro:skorokhod}, for every $\varphi \in C^{\infty},$ it holds $ \tilde{	\mathbb{P}}$-a.s.  that for $\forall t\in [0,T],$
\begin{align*}
	\left(\langle\tilde{\mathcal{M}}^N_t ,\varphi)\rangle, \langle \tilde{\eta}^N_t, \varphi \rangle , \langle \tilde{\eta}^N_0, \varphi \rangle ,
	\int^t_0 \langle  \Delta \varphi, \tilde{\eta}^N_s \rangle \mathd \nocomma s, \frac{1}{2}\int_0^t  \left\langle\sigma\cdot\nabla\(\sigma\cdot\nabla\varphi\),\tilde{\eta}^N_s\right\rangle \mathd s\right)
\end{align*}
converges to 
\begin{align*}
	\left(\langle \tilde{\mathcal{M}}_t ,\varphi\rangle,  \langle \tilde{\eta}_t, \varphi \rangle , \langle \tilde{\eta}_0, \varphi \rangle ,
	\int^t_0 \langle  \Delta \varphi, \tilde{\eta}_s \rangle \mathd \nocomma s, \frac{1}{2}\int_0^t  \left\langle\sigma\cdot\nabla\(\sigma\cdot\nabla\varphi\),\tilde{\eta}_s\right\rangle \mathd s\right). 
\end{align*}
Furthermore,  recall Lemma \ref{lemma limit no}, there exists a subsequence $\{N_k\}_{k\geq 1}$ ( still denoted by $\{N\}_{N\in \mathbb{N}}$  for simplicity) such that 
\begin{align*}
	\int^t_0\tilde{  \mathcal{K}}^N_s
	(\varphi) - \langle \tilde{v}_s K \ast \tilde{\eta}_s + \tilde{\eta}_s K \ast
	\tilde{v}_s, \nabla \varphi \rangle \mathd \nocomma s &\xrightarrow{N\rightarrow\infty}0,\quad \forall t\in [0,T],\quad \tilde{	\mathbb{P}}-a.s..
\end{align*}

We then identify $\tilde{v}$ is the unique probabilistically strong solution to the mean field limit equation \eqref{eqt:mean}.
\begin{proposition}\label{prop:w}
The process $\{\tilde{W}_t,t\in[0,T]\}$ is a 1-dimensional $(\tilde{\mathcal{F}}_t)_{t\in[0,T]}$ Brownian motion. 
Moreover, $\tilde{v}$ is the unique probabilistically strong solution to the mean field limit equation \eqref{eqt:mean}, in the sense defined by  Definition \ref{def:mean-strong} and \ref{def:uni-mean}.
\end{proposition}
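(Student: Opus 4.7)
The plan is to proceed in three steps: first establish that $\tilde{W}$ is a Brownian motion with respect to the augmented filtration $(\tilde{\mathcal{F}}_t)$, then identify $\tilde{v}$ as a probabilistically weak solution of \eqref{eqt:mean} driven by $\tilde{W}$ in the sense of Definition \ref{def:mean}, and finally invoke the pathwise uniqueness of \eqref{eqt:mean} (Lemma \ref{thm:spde}, Definition \ref{def:uni-mean}) together with the Yamada--Watanabe principle to upgrade $\tilde{v}$ to the unique probabilistically strong solution in the sense of Definition \ref{def:mean-strong}.

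For the Brownian-motion property I would apply L\'evy's characterization. Continuity and $\tilde{W}_0 = 0$ are inherited from the $\tilde{\mathbb{P}}$-a.s.\ convergence $\tilde{W}^N \to \tilde{W}$ in $\mathcal{W}$ supplied by Proposition \ref{pro:skorokhod}. To verify the martingale property of $\tilde{W}_t$ and $\tilde{W}_t^2 - t$ against $(\tilde{\mathcal{F}}_t)$, I take an arbitrary bounded continuous cylindrical functional $\Psi$ of $(\tilde{v}|_{[0,s]}, \tilde{\eta}|_{[0,s]}, \tilde{\mathcal{M}}|_{[0,s]}, \tilde{W}|_{[0,s]})$ and compute
\begin{align*}
\tilde{\mathbb{E}}\big[(\tilde{W}_t - \tilde{W}_s)\Psi(\tilde{v}, \tilde{\eta}, \tilde{\mathcal{M}}, \tilde{W})\big]
&= \lim_{N\to\infty}\tilde{\mathbb{E}}\big[(\tilde{W}^N_t - \tilde{W}^N_s)\Psi(\tilde{v}^N, \tilde{\eta}^N, \tilde{\mathcal{M}}^N, \tilde{W}^N)\big] \\
&= \lim_{N\to\infty}\mathbb{E}\big[(W_t - W_s)\Psi(v, \eta^N, \mathcal{M}^N, W)\big] = 0,
\end{align*}
passing to the limit using the a.s.\ convergence in Proposition \ref{pro:skorokhod}(2) and uniform integrability from the uniformly bounded second moments of $\tilde{W}^N$, the equality of laws in Proposition \ref{pro:skorokhod}(1), and finally the fact that $W$ is an $(\mathcal{F}_t)$-Brownian motion on the original space. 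The same argument applied to $\tilde{W}_t^2 - t$ yields the quadratic martingale property.

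For the weak-solution step, since $(\tilde{v}^N, \tilde{W}^N)$ has the same law as $(v, W)$ and $v$ is the probabilistically strong solution of \eqref{eqt:mean} driven by $W$ (Lemma \ref{thm:spde}), each $\tilde{v}^N$ satisfies the weak formulation of \eqref{eqt:mean} driven by $\tilde{W}^N$ on $(\tilde{\Omega}, \tilde{\mathcal{F}}, \tilde{\mathbb{P}})$. I then pass to the limit: the bounds \eqref{regu-1}--\eqref{regu-2} transfer to $\tilde{v}$ via Fatou's lemma and lower semicontinuity; the deterministic drift and the quadratic nonlinearity $K \ast \tilde{v}^N\,\tilde{v}^N$ converge using the $\tilde{\mathbb{P}}$-a.s.\ convergence $\tilde{v}^N \to \tilde{v}$ in $\mathcal{V}$ combined with Lemmas \ref{lemma:infity} and \ref{lemma convolution}. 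For the Stratonovich integral $\int_0^t \langle \nabla\varphi, \tilde{v}^N_s \sigma\rangle \circ d\tilde{W}^N_s$, I would convert to It\^o form (producing a correction involving $\sigma \cdot \nabla(\sigma \cdot \nabla \tilde{v}^N)$) and identify the limit of the resulting It\^o martingale with $\int_0^t \langle \nabla\varphi, \tilde{v}_s \sigma\rangle\,d\tilde{W}_s$ by a martingale-characterization argument analogous to Step~1, exploiting the $L^2(\tilde{\Omega})$ convergence of both the martingale and its quadratic variation $\int_0^t \langle \nabla\varphi, \tilde{v}^N_s \sigma\rangle^2\,ds$.

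Steps 1--2 together produce a probabilistically weak solution to \eqref{eqt:mean} on $(\tilde{\Omega}, \tilde{\mathcal{F}}, (\tilde{\mathcal{F}}_t), \tilde{\mathbb{P}})$ driven by $\tilde{W}$; combining pathwise uniqueness (Definition \ref{def:uni-mean}, Lemma \ref{thm:spde}) with Yamada--Watanabe then forces $\tilde{v}$ to be $(\mathcal{F}^{\tilde{W}}_t)$-adapted and to coincide with the unique probabilistically strong solution, which completes the proof. The main obstacle I anticipate is the convergence of the Stratonovich stochastic integral, because the It\^o--Stratonovich correction requires $H^2$-level control of $\tilde{v}^N$ together with joint convergence of the martingale and its quadratic variation; this is precisely why the ambient space $\mathcal{V}$ has been designed to include the $L^2([0,T];H^4)$ component and why the initial datum $v_0 \in H^3(\mathbb{T}^2)$ with strictly positive lower bound is assumed.
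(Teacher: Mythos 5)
Your Step~1 (L\'evy characterization of $\tilde W$ via cylindrical test functionals, transfer of the martingale identities through equality of laws in Proposition \ref{pro:skorokhod}(1), uniform integrability from moment bounds on $W$) coincides with the paper's argument. Your Step~2, however, takes a genuinely different and considerably more laborious route than the paper's, and misses the observation that makes the paper's step essentially free. You propose to pass to the limit term by term in the weak formulation of \eqref{eqt:mean}, including a convergence argument for the Stratonovich integral (It\^o--Stratonovich conversion, control of the correction $\sigma\cdot\nabla(\sigma\cdot\nabla\tilde v^N)$, joint $L^2$ convergence of the martingale and its quadratic variation), and you correctly flag this as the main difficulty. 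But this difficulty is entirely avoidable: in Proposition \ref{pro:skorokhod}(1), the pair $(\tilde v^N,\tilde W^N)$ has the same law as $(v,W)$, and since $(v,W)$ is a \emph{fixed} object independent of $N$, the law $\mathcal{L}(\tilde v^N,\tilde W^N)=\mathcal{L}(v,W)$ does not vary with $N$. Combined with the $\tilde{\mathbb P}$-a.s.\ convergence $(\tilde v^N,\tilde W^N)\to(\tilde v,\tilde W)$, one immediately gets $\mathcal{L}(\tilde v,\tilde W)=\mathcal{L}(v,W)$ with no SPDE-level limit argument at all. The solution property and the bounds \eqref{regu-1}--\eqref{regu-2} are law-determined once $\tilde W$ is known to be a Brownian motion, so this equality of laws, together with the well-posedness of \eqref{eqt:mean} from Lemma \ref{thm:spde}, already gives that $\tilde v$ is the unique probabilistically strong solution driven by $\tilde W$. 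Your approach is correct in principle and more robust (it would also apply if the laws were merely converging rather than constant in $N$), but for this statement it duplicates the Yamada--Watanabe content already encapsulated in Lemma \ref{thm:spde} and costs you the hardest technical step, which the paper never has to carry out.
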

			\begin{proof}		
				For $0\leq s<t \leq T,$ let $\gamma :\mathcal{H}_s\to \R$ be a bounded and continuous function, where
				\begin{align*}
					\mathcal{H}_s\assign &C([0,s]; L^2) \times \cap_{k\in\mathbb{N}}C([0,s]; H^{-3-\frac{1}{k}} )\\&\times \cap_{k\in\mathbb{N}}C([0,s]; H^{-2-\frac{1}{k}})\times C([0,s]; \mathbb{R}). 
				\end{align*}
					We will use the abbreviations
				\begin{align}\label{not:gamma}
					\begin{split}
						\gamma^N\assign & \gamma\left( v_{[0,s]},\eta^N_{[0,s]},
						\mathcal{M}^N_{[0,s]},W_{[0,s]} \right),\\
						\tilde{\gamma}^N \assign & \gamma\left( \tilde{v}^N_{[0,s]},\tilde{\eta}^N_{[0,s]},
						\tilde{\mathcal{M}}^N_{[0,s]},\tilde{W}^N_{[0,s]} \right), \\
						\tilde{\gamma} \assign & \gamma\left( \tilde{v}_{[0,s]},\tilde{\eta}_{[0,s]},
						\tilde{\mathcal{M}}_{[0,s]},\tilde{W}_{[0,s]} \right).
					\end{split}
				\end{align}
				Since the joint law of $(\tilde{v}^N,\tilde{\eta}^N,
				\tilde{\mathcal{M}}^N,\tilde{W}^N)$ coincides with the joint law of $(v,\eta^N,\mathcal{M}^N,W)$, we infer  that
			\begin{align}
					\label{eq:wiener}
			\begin{split}
					&\tilde{\mathbb{E}} \left( \tilde{\gamma}^N \left( \tilde{W}^N(t) - \tilde{W}^N(s) \right) \right)\\=&\mathbb{E} \left( \gamma^N \left( W(t) - W(s) \right) \right) =0,
				\\&\tilde{\mathbb{E}} \left( \tilde{\gamma}^N {\tilde{W}^N(t)}{\tilde{W}^N(t)}\right)-\tilde{\mathbb{E}}\left( \tilde{\gamma}^N {\tilde{W}^N(s)}{\tilde{W}^N(s)}\right)\\=& {\mathbb{E}} \left( \gamma^N {W(t)}{{W}(t)} \right)  - {\mathbb{E}} \left(\gamma^N {{W}(s)}{{W}(s)} \right) =t-s.
			\end{split}
			\end{align}
Using  Burkholder-Davis-Gundy inequality for $W,$ we obtain the following uniform bound
			\begin{align*}
				\tilde{\mathbb{E}} | \tilde{W}^N(t) |^3 = \mathbb{E} | W(t) |^3 &\leq C t^{\frac{3}{2}},\quad \forall N\in \mathbb{N},
			\end{align*}
		which provides the uniform integrability of the above terms.
	We thus can pass to the limit in the equations \eqref{eq:wiener} and infer
				\begin{align}
			\label{eq:wiener-limit}
				\begin{split}
				&\tilde{\mathbb{E}} \left( \tilde{\gamma} \left( \tilde{W}(t) - \tilde{W}(s) \right) \right)=0 ,
				\\&\tilde{\mathbb{E}} \left( \tilde{\gamma} {\tilde{W}(t)}{\tilde{W}(t)} \right)  - \tilde{\mathbb{E}} \left(\tilde{\gamma} {\tilde{W}(s)}{\tilde{W}(s)} \right)=\left(t-s\right) . 
			\end{split} 
				\end{align}
			This implies that $\{\tilde{W}_t,t\in [0,T]\}$ is a  square-integrable $(\tilde{\mathcal{F}}_t)_{t\in[0,T]}$-martingale with $(\tilde{\mathcal{F}}_t)_{t\in[0,T]}$-quadratic variation   $\[W,W\]_{t}=t.$
		  By the L\'evy martingale characterization theorem,  we conclude that $\tilde{W}$ is a $(\tilde{\mathcal{F}}_t)_{t\in[0,T]}$ 1-dimensional Brownian motion. 
		  
		  Furthermore, as a consequence of Proposition \ref{pro:skorokhod},  it holds $\tilde{\mathbb{P}}$-a.s. that $(\tilde{v}^N,\tilde{W}^N)$ with the same law $\mathcal{L}(v,W),$ where $v$ is the unique probabilistically strong solution to the mean field limit equation \eqref{eqt:mean} on the previous stochastic basis $(\Omega,\mathcal{F},(\mathcal{F}^W_t)_{t\in [0,T]},\mathbb{P},W),$ converges to $(\tilde{v},\tilde{W})$ in the space $C([0,T];L^2)\cap L^2([0,T];H^4), \tilde{\mathbb{P}}$-a.s.. This implies that $\mathcal{L}(\tilde{v},\tilde{W})=\mathcal{L}(v,W).$ Since we have established that $\tilde{W}$ is $(\tilde{\mathcal{F}}_t)_{t\in[0,T]}$-1-dimensional Brownian motion and given the well-posedness of the mean field limit equation \eqref{eqt:mean} in Lemma \ref{thm:spde}, we conclude that $\tilde{v}$ is the unique probabilistically strong solution to \eqref{eqt:mean}.  
			\end{proof}
		
		We now proceed to handle the transport noise term $\sigma\cdot \nabla \tilde{\eta}_t \mathd \tilde{W}_t,$  using the approach in \cite{hofmanova2017weak}.
		For $\varphi \in C^{\infty},$ let's define
		\begin{align*}
			\tilde{Z}_{\cdot}\assign &\langle \tilde{\eta}_{\cdot}, \varphi \rangle - \langle  \tilde{\eta}_0, \varphi \rangle -
			\int^{\cdot}_0 \langle  \Delta \varphi,  \tilde{\eta}_s \rangle \mathd \nocomma s -
			\int^{\cdot}_0 \langle \nabla \varphi, v_s K \ast  \tilde{\eta}_s \rangle \mathd
			\nocomma s - \int^{\cdot}_0 \langle \nabla \varphi,  \tilde{\eta}_s K \ast v_s
			\rangle \mathd \nocomma s\\&- \mathcal{M}_{\cdot}(\varphi)-\frac{1}{2}\int_0^{\cdot} \left\langle\sigma\cdot\nabla\(\sigma\cdot\nabla\varphi\), \tilde{\eta}_s\right\rangle \mathd s.
		\end{align*}
		(We do the same constructions to define $\tilde{Z}^N,Z^N$ for $\tilde{\eta}^N,\eta^N$).
		\begin{proposition}\label{prop:transport}
			The processes   \begin{align*}
				\tilde{Z},\quad \tilde{Z}^2-\int_{0}^{t}\left\langle \sigma\cdot \nabla\varphi, \tilde{\eta}_s\right\rangle^2 \mathd s,\quad \tilde{Z}\tilde{W}-\int_{0}^{t}\left\langle \sigma\cdot \nabla\varphi, \tilde{\eta}_s\right\rangle \mathd s,
			\end{align*} indexed by $t\in[0,T],$ are $(\tilde{\mathcal{F}}_t)_{t\in[0,T]}-$local-martingale. 
		\end{proposition}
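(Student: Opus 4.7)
The strategy is to transfer the local martingale property from the prelimit particle system using the Skorohod representation (Proposition \ref{pro:skorokhod}) combined with the convergence results of Lemma \ref{lemma limit no}. At the particle level, define $Z^N_t$ in parallel with $\tilde{Z}_t$ but with $\tilde{\eta}, \tilde{v}, \tilde{\mathcal{M}}$ replaced by $\eta^N, v, \mathcal{M}^N$ and the interaction $\langle v_s K\ast\tilde{\eta}_s + \tilde{\eta}_s K\ast v_s, \nabla\varphi\rangle$ replaced by $\mathcal{K}^N_s(\varphi)$. The It\^o representation \eqref{spde-etaN} directly yields $Z^N_t = \int_0^t \langle \sigma\cdot\nabla\varphi, \eta^N_s\rangle\mathd W_s$, so that $Z^N$, $(Z^N)^2 - \int_0^t \langle \sigma\cdot\nabla\varphi, \eta^N_s\rangle^2 \mathd s$, and $Z^N W - \int_0^t \langle \sigma\cdot\nabla\varphi, \eta^N_s\rangle \mathd s$ are $(\mathcal{F}_t)$-local martingales. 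Equality of joint laws in Proposition \ref{pro:skorokhod}(1) transports this to the tilde side: $\tilde{Z}^N$, $(\tilde{Z}^N)^2 - \int_0^t \langle \sigma\cdot\nabla\varphi, \tilde{\eta}^N_s\rangle^2 \mathd s$, and $\tilde{Z}^N \tilde{W}^N - \int_0^t \langle \sigma\cdot\nabla\varphi, \tilde{\eta}^N_s\rangle \mathd s$ are $(\tilde{\mathcal{F}}^N_t)$-local martingales.

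To pass to the limit $N\to\infty$, I will fix $0\le s < t \le T$ and a bounded continuous $\gamma: \mathcal{H}_s \to \mathbb{R}$ and use the notation of \eqref{not:gamma}. The prelimit (stopped) martingale identities read
\begin{align*}
\tilde{\mathbb{E}}\bigl[\tilde{\gamma}^N(\tilde{Z}^N_t - \tilde{Z}^N_s)\bigr] &= 0, \\
\tilde{\mathbb{E}}\Bigl[\tilde{\gamma}^N\Bigl((\tilde{Z}^N_t)^2 - (\tilde{Z}^N_s)^2 - \int_s^t \langle \sigma\cdot\nabla\varphi, \tilde{\eta}^N_r\rangle^2 \mathd r\Bigr)\Bigr] &= 0, \\
\tilde{\mathbb{E}}\Bigl[\tilde{\gamma}^N\Bigl(\tilde{Z}^N_t \tilde{W}^N_t - \tilde{Z}^N_s \tilde{W}^N_s - \int_s^t \langle \sigma\cdot\nabla\varphi, \tilde{\eta}^N_r\rangle \mathd r\Bigr)\Bigr] &= 0.
\end{align*}
Proposition \ref{pro:skorokhod}(2) supplies the $\tilde{\mathbb{P}}$-a.s.\ convergence of all linear terms appearing in $\tilde{Z}^N$, while Lemma \ref{lemma limit no} handles the interaction term along a subsequence. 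Since $\sigma\cdot\nabla\varphi$ is smooth and $\tilde{\eta}^N \to \tilde{\eta}$ in $L^2([0,T]; H^{-\alpha})$ for every $\alpha>1$, the quadratic and cross variation pieces $\int_s^t \langle \sigma\cdot\nabla\varphi, \tilde{\eta}^N_r\rangle^2 \mathd r$ and $\int_s^t \langle \sigma\cdot\nabla\varphi, \tilde{\eta}^N_r\rangle \mathd r$ also converge $\tilde{\mathbb{P}}$-a.s.\ to their limiting analogues.

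The main obstacle is the uniform integrability required to exchange expectation and limit, since Corollary \ref{lemma uni mu2} only supplies bounds weighted by $\mathcal{R}_T$ and Lemma \ref{lemma: eta unit} provides uniform $H^{-\alpha}$ bounds on $\tilde{\eta}^N$ only after stopping at $\tilde{\tau}^N_{\alpha,M,R}$. I therefore localize in the spirit of Section \ref{sec:compactness}: introduce the stopping times $\tilde{\tau}^N_{\alpha,M,R}$ built from $(\tilde{v}^N, \tilde{\eta}^N)$ and the corresponding $\tilde{\tau}_{\alpha,M,R}$ built from $(\tilde{v}, \tilde{\eta})$; verify each of the three identities above for the stopped processes (using optional sampling at the prelimit together with the uniform bounds furnished by Lemmas \ref{lemma: eta unit}, \ref{lemma equi mu} and \ref{prop converge}); then send $R, M \to \infty$ using that $\tilde{\tau}_{\alpha,M,R} \nearrow T$ $\tilde{\mathbb{P}}$-a.s., which follows from $\mathcal{R}_T(\tilde{v}) > 0$ and $\sup_{t\in[0,T]} \|\tilde{\eta}_t\|_{H^{-\alpha-2}} < \infty$ $\tilde{\mathbb{P}}$-a.s. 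This yields the $(\tilde{\mathcal{F}}_t)$-local martingale property of $\tilde{Z}$, of $\tilde{Z}^2 - \int_0^\cdot \langle \sigma\cdot\nabla\varphi, \tilde{\eta}_s\rangle^2 \mathd s$, and of $\tilde{Z}\tilde{W} - \int_0^\cdot \langle \sigma\cdot\nabla\varphi, \tilde{\eta}_s\rangle \mathd s$, which is precisely the martingale characterization identifying $\tilde{Z}$ with the It\^o integral $\int_0^\cdot \langle \sigma\cdot\nabla\varphi, \tilde{\eta}_s\rangle \mathd \tilde{W}_s$.
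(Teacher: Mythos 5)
Your overall strategy matches the paper's: identify $Z^N_t = \int_0^t \langle\sigma\cdot\nabla\varphi,\eta^N_s\rangle\,\mathd W_s$ from \eqref{spde-etaN}, transfer the local-martingale characterization to the tilde side by equality of joint laws, write the three (stopped) martingale identities tested against bounded continuous functionals $\tilde{\gamma}^N$, and pass to the limit $N\to\infty$. The identification of $Z^N$ as a stochastic integral and the use of a localization level to secure uniform integrability are the right ingredients.

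The gap is in the convergence of the localizing stopping times. You introduce $\tilde{\tau}^N_{\alpha,M,R}$ (built from $(\tilde{v}^N,\tilde{\eta}^N)$) and $\tilde{\tau}_{\alpha,M,R}$ (built from $(\tilde{v},\tilde{\eta})$) and plan to pass $N\to\infty$ in the identities for the stopped processes, then send $R,M\to\infty$. But passing $N\to\infty$ in, e.g., $\tilde{\mathbb{E}}\bigl[\tilde{\gamma}^N\,\tilde{Z}^N(t\wedge\tilde{\tau}^N_{\alpha,M,R})\bigr]$ requires that the $N$-dependent stopping times themselves converge to $\tilde{\tau}_{\alpha,M,R}$, and that is not implied by a.s.\ convergence of the underlying paths: the hitting-time functional $f\mapsto\inf\{t:\, f(t)\geq M\}$ is not continuous at paths that merely touch the level $M$. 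The paper resolves this by appealing to the observation of \cite[Lemmas 3.5, 3.6]{hofmanova2013weak}: for the nondecreasing process $\tilde{I}(t)=\sup_{s\leq t}\|\tilde{\eta}_s\|_{H^{-7/2}}$ there exists a sequence of levels $M_n\to\infty$ along which $\vartheta_{M_n}$ is $\tilde{\mathbb{P}}$-a.s.\ continuous at $\tilde{I}$, which yields $\vartheta_{M_n}(\tilde{I}^N)\to\vartheta_{M_n}(\tilde{I})$ a.s.\ and hence convergence of the stopped processes. Without an argument of this kind your limit passage does not close, so this is a genuine missing idea rather than a skipped detail.

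A secondary point: the $R$-component of your localization (capping $\mathcal{R}_T^{-1}(\tilde{v}^N)$) is superfluous here. The uniform integrability needed to exchange limit and expectation concerns $\tilde{Z}^N(\cdot\wedge\vartheta_{M_n})$, and the Burkholder-Davis-Gundy inequality bounds its fourth moment by the stopped quadratic variation $\int_0^{\cdot\wedge\vartheta_{M_n}}\langle\sigma\cdot\nabla\varphi,\eta^N_r\rangle^2\,\mathd r \leq C_{M_n}$; this uses only the pre-stopping bound on $\|\eta^N\|_{H^{-\alpha_0}}$ and is insensitive to $\|v\|_{H^4}$. Building the stopping time from $\tilde{\eta}^N$ alone, as the paper does, avoids tracking an additional $N$-dependent object whose convergence you would otherwise have to justify.
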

	\begin{proof}
	We recall the following notations. For $0\leq s<t \leq T,$ $\gamma :\mathcal{H}_s\to \R$ denote a bounded and continuous function, where
	\begin{align*}
		\mathcal{H}_s\assign &C([0,s]; L^2) \times \cap_{k\in\mathbb{N}}C([0,s]; H^{-3-\frac{1}{k}} )\\&\times \cap_{k\in\mathbb{N}}C([0,s]; H^{-2-\frac{1}{k}})\times C([0,s]; \mathbb{R}). 
	\end{align*}
	We introduce the following abbreviations.
	\begin{align*}
		\begin{split}
			\gamma^N\assign & \gamma\left( v_{[0,s]},\eta^N_{[0,s]},
			\mathcal{M}^N_{[0,s]},W_{[0,s]} \right),\\
			\tilde{\gamma}^N \assign & \gamma\left( \tilde{v}^N_{[0,s]},\tilde{\eta}^N_{[0,s]},
			\tilde{\mathcal{M}}^N_{[0,s]},\tilde{W}^N_{[0,s]} \right), \\
			\tilde{\gamma} \assign & \gamma\left( \tilde{v}_{[0,s]},\tilde{\eta}_{[0,s]},
			\tilde{\mathcal{M}}_{[0,s]},\tilde{W}_{[0,s]} \right).
		\end{split}
	\end{align*}
		Let $M>0$ and define 
		\begin{align*}
			\vartheta_M\assign C([0,T];R)\rightarrow[0,T], \quad f\rightarrow \inf\{t>0; |f(t)|\geq M\}
		\end{align*}
		(with the convention $\inf \emptyset=T$).
		Choosing  $\alpha_0=\frac{7}{2}$ and  noting that $\tilde{\eta}^N$ belongs to $C([0,T];H^{-\alpha_0}),$ we then deduce that for every $N\in \mathbb{N},$ $\vartheta_M(\tilde{I}^N)$ defines an $(\tilde{\mathcal{F}}_t)_{t\in[0,T]}$-stopping time and the blow up does not occur in a finite time, i.e.
		\begin{align}\label{stoptime-T}
			\sup_{M>0}\vartheta_M(\tilde{I}^N)=T\quad \tilde{\mathbb{P}}-a.s.,
		\end{align}
		where  $\tilde{I}^N(t)\assign\sup_{s \in [0, t]}\|\tilde{\eta}^N\|_{H^{-\alpha_0}}.$ The same is valid for the case $\tilde{I}(t)\assign\sup_{s \in [0, t]}\|\tilde{\eta}\|_{H^{-\alpha_0}}.$
		The stopping times $\vartheta_M(\tilde{I})$	will play the role of a localizing sequence for the processes \begin{align*}
		\tilde{Z},\quad \tilde{Z}^2-\int_{0}^{t}\left\langle \sigma\cdot \nabla\varphi, \tilde{\eta}_s\right\rangle^2 \mathd s,\quad \tilde{Z}\tilde{W}-\int_{0}^{t}\left\langle \sigma\cdot \nabla\varphi, \tilde{\eta}_s\right\rangle \mathd s.
	\end{align*} 
Due to the observation made in  \cite[Lemma 3.5, Lemma 3.6]{hofmanova2013weak}, there exists a sequence $\{M_n\}\rightarrow\infty$ such that \begin{align*}
			\tilde{\mathbb{P}}\(\vartheta_{M_n}(\cdot)\text{ is continuous at $\tilde{I}$}\)=1.
		\end{align*}
		Consequencely, we establish the convergence of stopping times, that is, for fixed $n\in \mathbb{N},$
		\begin{align*}
			\vartheta_{M_n}(\tilde{I}^N) \xrightarrow{N\rightarrow\infty}\vartheta_{M_n}(\tilde{I}),\quad \tilde{\mathbb{P}}-a.s..
		\end{align*}
		
		 Since   the joint law of $(\tilde{v}^N,\tilde{\eta}^N,
		\tilde{\mathcal{M}}^N,\tilde{W}^N)$ coincides with the joint law of $(v,\eta^N,\mathcal{M}^N,W),$ we then have for every $n\in \mathbb{N}$ and $0\leq s<t \leq T,$ 
		\begin{align*}
			&\tilde{	\mathbb{E}}\[\tilde{\gamma}^N \tilde{Z}^N(t\wedge\vartheta_{M_n}(\tilde{I}^N))\]=\tilde{	\mathbb{E}}\[\tilde{\gamma}^N \tilde{Z}^N(s\wedge\vartheta_{M_n}(\tilde{I}^N))\],
			\\&\tilde{	\mathbb{E}}\[\tilde{\gamma}^N\((\tilde{Z}^N(t\wedge\vartheta_{M_n}(\tilde{I}^N))^2- \int_0^{t\wedge\vartheta_{M_n}(\tilde{I}^N)}  \left\langle \sigma\cdot \nabla\varphi,\tilde{\eta}^N_r\right\rangle ^2 \mathd r\) \]\\=&\tilde{	\mathbb{E}}\[\tilde{\gamma}^N\((\tilde{Z}^N(s\wedge\tau_{M_n}(\tilde{I}^N)))^2- \int_0^{s\wedge\tau_{M_n}(\tilde{I}^N)} \left\langle \sigma\cdot \nabla\varphi,\tilde{\eta}^N_r\right\rangle ^2 \mathd r\) \],	
		\end{align*}
		and
		\begin{align*}
			&\tilde{	\mathbb{E}}\[\tilde{\gamma}^N\(\tilde{W}^N(t\wedge\vartheta_{M_n}(\tilde{I}^N))\tilde{Z}^N(t\wedge\vartheta_{M_n}(\tilde{I}^N))-\int_{0}^{t\wedge\vartheta_{M_n}(\tilde{I}^N)}\left\langle \sigma\cdot \nabla\varphi,\tilde{\eta}^N_r\right\rangle \mathd r\)\]\\=&\tilde{	\mathbb{E}}\[\tilde{\gamma}^N\(\tilde{W}^N(s\wedge\vartheta_{M_n}(\tilde{I}^N))\tilde{Z}^N(s\wedge\vartheta_{M_n}(\tilde{I}^N))-\int_{0}^{s\wedge\vartheta_{M_n}(\tilde{I}^N)}\left\langle \sigma\cdot \nabla\varphi,\tilde{\eta}^N_r\right\rangle \mathd r\)\].
		\end{align*}
		The Burkholder-Davis-Gundy's inequality for $\tilde{Z}^N(t\wedge\vartheta_{M_n}(\tilde{I}^N))$ yields the uniform bound
		\begin{align*}
			&\tilde{\mathbb{E}}\big|\tilde{Z}^N(t\wedge\vartheta_{M_n}(\tilde{I}^N))\big|^4=\mathbb{E}\big|\int_{0}^{t\wedge\vartheta_{M_n}(I^N)}\left\langle \sigma\cdot \nabla\varphi,\tilde{\eta}^N_r\right\rangle \mathd W_r\big|^4\\&\leq \mathbb{E}\big|\int_{0}^{t\wedge\vartheta_{M_n}(I^N)}\left\langle \sigma\cdot \nabla\varphi,\tilde{\eta}^N_r\right\rangle^2 \mathd r\big|^2\leq C_{M_n},\quad \forall N\in \mathbb{N},
		\end{align*}
		which provide the necessary uniform integrability. 
		We thus can pass the limit in equations and infer  \begin{align*}
			&\tilde{	\mathbb{E}}\[\tilde{\gamma}\tilde{Z}(t\wedge\vartheta_{M_n}(\tilde{I}))\]=\tilde{	\mathbb{E}}\[\tilde{\gamma}\tilde{Z}(s\wedge\vartheta_{M_n}(\tilde{I}))\],
			\\&\tilde{	\mathbb{E}}\[\tilde{\gamma}\(\tilde{Z}^2(t\wedge\vartheta_{M_n}(\tilde{I}))- \int_0^{t\wedge\vartheta_{M_n}(\tilde{I})}  \left\langle \sigma\cdot \nabla\varphi,\tilde{\eta}_r\right\rangle ^2 \mathd r\) \]\\=&\tilde{	\mathbb{E}}\[\tilde{\gamma}\(\tilde{Z}^2(s\wedge\vartheta_{M_n}(\tilde{I}))- \int_0^{s\wedge\vartheta_{M_n}(\tilde{I})} \left\langle \sigma\cdot \nabla\varphi,\tilde{\eta}_r\right\rangle ^2 \mathd r\) \],	
		\end{align*}
		and
		\begin{align*}
			&\tilde{	\mathbb{E}}\[\tilde{\gamma}\(\tilde{W}(t\wedge\vartheta_{M_n}(\tilde{I}))\tilde{Z}(t\wedge\vartheta_{M_n}(\tilde{I}))-\int_{0}^{t\wedge\vartheta_{M_n}(\tilde{I})}\left\langle \sigma\cdot \nabla\varphi,\tilde{\eta}_r\right\rangle \mathd r\)\]\\=&\tilde{	\mathbb{E}}\[\tilde{\gamma}\(\tilde{W}(s\wedge\vartheta_{M_n}(\tilde{I}))\tilde{Z}(s\wedge\vartheta_{M_n}(\tilde{I}))-\int_{0}^{s\wedge\vartheta_{M_n}(\tilde{I})}\left\langle \sigma\cdot \nabla\varphi,\tilde{\eta}_r\right\rangle \mathd r\)\].
		\end{align*}
		Therefore, for every $n\in \mathbb{N},$ 
		\begin{align*}
			&\tilde{Z}(\cdot\wedge\vartheta_{M_n}(\tilde{I})),
			\\&\tilde{Z}^2(\cdot\wedge\vartheta_{M_n}(\tilde{I}))- \int_0^{\cdot\wedge\vartheta_{M_n}(\tilde{I})}  \left\langle \sigma\cdot \nabla\varphi,\tilde{\eta}_r\right\rangle ^2 \mathd r,
			\\&\tilde{W}(\cdot \wedge\vartheta_{M_n}(\tilde{I}))\tilde{Z}(\cdot\wedge\vartheta_{M_n}(\tilde{I}))-\int_{0}^{\cdot\wedge\vartheta_{M_n}(\tilde{I})}\left\langle \sigma\cdot \nabla\varphi,\tilde{\eta}_r\right\rangle \mathd r,
		\end{align*}
		are $(\tilde{\mathcal{F}}_t)_{t\in[0,T]}-$martingales, which completes the proof.
	\end{proof}
	\begin{proof}[Proof of Theorem 4.5]
		Having Proposition \ref{prop:w} and Proposition \ref{prop:transport} in hand, we can directly calculate \begin{align*}
			\[\tilde{Z}(\cdot\wedge\vartheta_{M_n}(\tilde{I}))-\int_{0}^{\cdot\wedge\vartheta_{M_n}(\tilde{I})}\left\langle \sigma\cdot \nabla\varphi,\tilde{\eta}_r\right\rangle \mathd W_r,\tilde{Z}(\cdot\wedge\vartheta_{M_n}(\tilde{I}))-\int_{0}^{\cdot\wedge\vartheta_{M_n}(\tilde{I})}\left\langle \sigma\cdot \nabla\varphi,\tilde{\eta}_r\right\rangle \mathd W_r\]_t=0
		\end{align*}
		for every $n\in \mathbb{N}.$  Let's $n\rightarrow\infty,$ the fluctuation SPDE \eqref{eqt-limitspde} holds. Combining the results of Lemma \ref{lem:unilaw} and Proposition  \ref{prop:w},  we complete the proof.
	\end{proof}
\subsection{Uniqueness}\label{sec:unique}
In this subsection, we demonstrate  pathwise uniqueness for  the fluctuation SPDE \eqref{LimitSpde}  and complete the proof of our main results, Theorem \ref{thm:main1} and Theorem \ref{thm:main}.
\begin{theorem}\label{thm:uni}
	Pathwise uniqueness holds true for the fluctuation SPDE \eqref{LimitSpde} in the sense of  Deifinition \eqref{def:uni}.
\end{theorem}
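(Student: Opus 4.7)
The plan is to consider two probabilistically weak solutions $(\eta_t)_{t\in[0,T]}$ and $(\tilde\eta_t)_{t\in[0,T]}$ on the same stochastic basis $(\Omega,\mathcal F,(\mathcal G_t),\mathbb P)$, driven by the same $W$ and $\mathcal M$ and starting from the same $\eta_0$. Their difference $\xi_t\assign \eta_t-\tilde\eta_t$ starts from $0$ and the $\mathcal M_t$ term drops out, so $\xi$ solves the homogeneous linear SPDE
\begin{align*}
\mathd \xi_t \;=\;&\Delta\xi_t\,\mathd t-\nabla\!\cdot\!(v_t K\!*\!\xi_t)\,\mathd t-\nabla\!\cdot\!(\xi_t K\!*\!v_t)\,\mathd t\\
&+\tfrac12\sigma\!\cdot\!\nabla(\sigma\!\cdot\!\nabla\xi_t)\,\mathd t-(\sigma\!\cdot\!\nabla\xi_t)\,\mathd W_t .
\end{align*}
Fixing $\alpha\in(3,4)$ and $\Lambda\assign(1-\Delta)^{1/2}$, I would apply It\^o's formula to $\|\xi_t\|_{H^{-\alpha}}^2=\langle\Lambda^{-2\alpha}\xi_t,\xi_t\rangle$. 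Because $v$ is only almost-surely regular, I would localise with
\[
\tau_R\assign\inf\Big\{t\in[0,T]:\ \sup_{s\le t}\|v_s\|_{H^2}+\int_0^t\|v_s\|_{H^4}^2\,\mathd s>R\Big\}\wedge T,
\]
which, by Lemma \ref{thm:spde} and \eqref{regu-2}, satisfies $\tau_R\nearrow T$ $\mathbb P$-a.s., and with a further stopping time that bounds $\|\xi\|_{H^{-\alpha}}$ so the stochastic integral is a true martingale.

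Term by term: the viscous term yields $2\langle\Lambda^{-2\alpha}\xi_s,\Delta\xi_s\rangle=-2\|\xi_s\|_{H^{-\alpha+1}}^2\le 0$ and is simply discarded. For the two interaction terms, I would integrate by parts to rewrite them as $\langle\nabla\Lambda^{-2\alpha}\xi_s,v_sK\!*\!\xi_s\rangle$ and $\langle\nabla\Lambda^{-2\alpha}\xi_s,\xi_sK\!*\!v_s\rangle$, then combine Lemma \ref{lemma triebel} (duality and product estimates), Lemma \ref{lemma:infity} (multiplier by an $H^3$ function acts on $H^s$ for $|s|<3$), Lemma \ref{lemma convolution} (convolution gains one derivative, as in \eqref{lip:kvs}) to obtain, for $s\le\tau_R$,
\begin{equation*}
\big|\langle\Lambda^{-2\alpha}\xi_s,\nabla\!\cdot\!(v_s K\!*\!\xi_s)\rangle\big|+\big|\langle\Lambda^{-2\alpha}\xi_s,\nabla\!\cdot\!(\xi_s K\!*\!v_s)\rangle\big|\;\lesssim\;\|v_s\|_{H^3}\,\|\xi_s\|_{H^{-\alpha}}^2 .
\end{equation*}
The choice $\alpha\in(3,4)$ is precisely what makes all of these product/multiplier estimates close inside $H^{-\alpha}$.

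The main obstacle is the transport noise, where the It\^o correction $\frac12\sigma\!\cdot\!\nabla(\sigma\!\cdot\!\nabla\xi)$ must almost exactly cancel the quadratic variation $\|\sigma\!\cdot\!\nabla\xi\|_{H^{-\alpha}}^2$ of the stochastic integral. Using that $\sigma$ is divergence-free, so $\sigma\!\cdot\!\nabla$ is skew-adjoint on $L^2$, one gets
\[
\langle\Lambda^{-2\alpha}\xi,\sigma\!\cdot\!\nabla(\sigma\!\cdot\!\nabla\xi)\rangle
=-\|\sigma\!\cdot\!\nabla\xi\|_{H^{-\alpha}}^2-\langle[\sigma\!\cdot\!\nabla,\Lambda^{-2\alpha}]\xi,\sigma\!\cdot\!\nabla\xi\rangle,
\]
so the leading-order terms cancel and only the commutator survives. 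Since $\sigma$ is smooth, a standard pseudodifferential/duality estimate gives $|\langle[\sigma\!\cdot\!\nabla,\Lambda^{-2\alpha}]\xi,\sigma\!\cdot\!\nabla\xi\rangle|\lesssim_\sigma\|\xi\|_{H^{-\alpha}}^2$ — this is where the real work lies and can be carried out by paraproduct decomposition (Lemma \ref{lemma triebel}) applied to $[\sigma\!\cdot\!\nabla,\Lambda^{-2\alpha}]$, exploiting the leading-order cancellation.

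Putting these estimates together and taking expectation kills the stochastic integral (localised as above), leaving
\[
\mathbb E\|\xi_{t\wedge\tau_R}\|_{H^{-\alpha}}^2\;\le\;C_{R,\sigma}\int_0^t\mathbb E\|\xi_{s\wedge\tau_R}\|_{H^{-\alpha}}^2\,\mathd s .
\]
Gronwall's lemma yields $\xi_{t\wedge\tau_R}\equiv 0$ in $H^{-\alpha}$, and sending $R\to\infty$ gives $\sup_{t\in[0,T]}\|\xi_t\|_{H^{-\alpha}}^2=0$ $\mathbb P$-a.s., as required. Combining Theorem \ref{thm:existence}, Lemma \ref{lem:unilaw} and this pathwise uniqueness then closes the Yamada--Watanabe argument, yielding Corollary \ref{thm:main1} and, via Proposition \ref{pro:skorokhod}, Theorem \ref{thm:main}.
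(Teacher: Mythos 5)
Your proposal follows the same scaffolding as the paper's proof (difference $\xi=\eta^1-\eta^2$, It\^o on $\|\xi\|_{H^{-\alpha}}^2$, cancellation of the It\^o correction against the quadratic variation of the transport noise up to a commutator, Gronwall after localisation), but there is a genuine gap in the treatment of the interaction terms, and it is caused precisely by the step where you discard the dissipation.

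You claim
\[
\big|\langle\nabla\Lambda^{-2\alpha}\xi_s,\,\xi_s K\ast v_s\rangle\big|\;\lesssim\;\|v_s\|_{H^3}\,\|\xi_s\|_{H^{-\alpha}}^2
\]
from duality plus product estimates alone. This cannot work at the level $H^{-\alpha}$ with $\alpha\in(3,4)$: $\nabla\Lambda^{-2\alpha}\xi_s\in H^{\alpha-1}$, while $\xi_s\,K\ast v_s$ is no better than $H^{-\alpha}$ (the Biot--Savart smoothing acts on $v$, not on the undifferentiated $\xi$), so the $H^{\alpha-1}$--$H^{-\alpha}$ pairing is one full derivative short. (For the other term $\langle\nabla\Lambda^{-2\alpha}\xi_s,v_s K\ast\xi_s\rangle$ the one-derivative gain from $K\ast\xi_s$ makes the pairing close, but the two interaction terms are not symmetric.) The paper resolves this by \emph{keeping} the Laplacian contribution and applying Young's inequality in Fourier: each interaction term is split as $\eps\|\bar\eta_s\|_{H^{-\alpha}}^2+C_\eps\|K\ast\bar\eta_s\,v_s\|_{H^{-\alpha-1}}^2$ (resp.\ $C_\eps\|K\ast v_s\,\bar\eta_s\|_{H^{-\alpha-1}}^2$), and the $\eps$-piece is absorbed into the negative part of the Laplacian term $J_1$. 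So the dissipation is not decoration; it is what makes the interaction estimate close. Your step ``the viscous term is simply discarded'' removes exactly the tool needed (also note $\langle\Lambda^{-2\alpha}\xi,\Delta\xi\rangle=-\|\xi\|_{H^{-\alpha+1}}^2+\|\xi\|_{H^{-\alpha}}^2$, not $-\|\xi\|_{H^{-\alpha+1}}^2$). An alternative that would avoid borrowing from the Laplacian is to exploit that $K\ast v$ is divergence-free and run a commutator argument (as you do for $\sigma\cdot\nabla$), but that is not what the cited product/multiplier lemmas give you, and your text does not do it. Finally, a minor point: multiplying $H^{-\alpha}$ with $\alpha\in(3,4)$ by an $H^3$ multiplier does not land back in $H^{-\alpha}$; the paper uses $\|v_s\|_{H^4}$ (so $K\ast v_s\in H^5\subset C^{4}$). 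The transport-noise commutator and the Gronwall closure (you take expectation with a double localisation and use deterministic Gronwall; the paper applies a stochastic Gronwall lemma directly to the pathwise inequality with the local martingale) are both acceptable and essentially parallel.
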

	\begin{proof}
		Let ${W_t, t \in [0,T]}$ be a  1-dimensional Brownian motion and stochastic process $(\mathcal{M}_t)_{t\in [0,T]}$ takes values in $\bigcap_{k\in \mathbb{N}}C ([0, T];H^{-2-\frac{1}{k}}(\mathbb{T}^d)),$ both defined on the same stochastic basis $(\Omega,\mathcal{F}, (\mathcal{G}_t)_{t\in [0,T]},\mathbb{P}).$ Suppose further that $$\bigg(\Omega,\mathcal{F},(\mathcal{G}_t)_{t\in [0,T]},\mathbb{P},\(  \eta^i_t,\mathcal{M}_t,W_t\)_{t\in [0,T]}\bigg)_{i=1,2}$$  are  probabilistically weak solution to \eqref{LimitSpde} with the same initial data $\eta_0.$
We define $ \bar{\eta}\assign\eta^1-\eta^2$  and obtain that $\forall \varphi\in C^{\infty}(\mathbb{T}^2),$
	\begin{align*}
		\langle \bar{\eta}_t, \varphi \rangle =&	\int^t_0 \langle  \Delta \varphi, \bar{\eta}_s \rangle \mathd \nocomma s +
		\int^t_0 \langle \nabla \varphi, v_s K \ast \bar{\eta}_s \rangle \mathd
		\nocomma s + \int^t_0 \langle \nabla \varphi, \bar{\eta}_s K \ast v_s
		\rangle \mathd \nocomma s \nonumber\\
		&+\frac{1}{2}\int_0^t  \left\langle\sigma\cdot\nabla\(\sigma\cdot\nabla\varphi\),\bar{\eta}_s\right\rangle \mathd s+\int_{0}^{t}\left\langle \sigma\cdot \nabla\varphi,\bar{\eta}_s\right\rangle \mathd W_s,\quad \forall t\in [0,T],\mathbb{P}-a.s..
	\end{align*}
Then, we evolve $|\langle \bar{\eta}_{\cdot}, \varphi \rangle|^2$ for each $e_k$ of  the Fourier basis $\{e_k\assign e^{\sqrt{-1}k\cdot x}, k\in \mathbb{Z}^2\},$ using It\^o's formula.
	\begin{align*}
		\mathd \mid \langle\bar{\eta}_t,e_k\rangle\mid^2
		&=\langle\bar{\eta}_t,e_{k}\rangle \[\langle  \Delta e_{-k}, \bar{\eta}_t \rangle +\langle \nabla e_{-k}, v_t K \ast \bar{\eta}_t \rangle\\&+ \langle \nabla e_{-k}, \bar{\eta}_t K \ast v_t
		\rangle +\frac{1}{2}\left\langle\sigma\cdot\nabla\(\sigma\cdot\nabla e_{-k}\),\bar{\eta}_t\right\rangle\]\mathd t
		\\&+\langle\bar{\eta}_t,e_{-k}\rangle \[\langle  \Delta e_{k}, \bar{\eta}_t \rangle +\langle \nabla e_{k}, v_t K \ast \bar{\eta}_t \rangle\\&+ \langle \nabla e_{k}, \bar{\eta}_t K \ast v_t
		\rangle +\frac{1}{2}\left\langle\sigma\cdot\nabla\(\sigma\cdot\nabla e_{k}\),\bar{\eta}_t\right\rangle\]\mathd t
		\\&+\langle\bar{\eta}_t,e_{-k}\rangle \left\langle \sigma\cdot \nabla e_{k},\bar{\eta}_t\right\rangle \mathd W_t+\langle\bar{\eta}_t,e_{k}\rangle \left\langle \sigma\cdot \nabla e_{-k},\bar{\eta}_t\right\rangle \mathd W_t
		\\&+\left\langle \sigma\cdot \nabla e_{-k},\bar{\eta}_t\right\rangle\left\langle \sigma\cdot \nabla e_{k},\bar{\eta}_t\right\rangle \mathd t.
	\end{align*}
 For fixed $3>\alpha>2,$ we now sum  $(1+|k|^2)^{-\alpha-1}\mid \langle\bar{\eta}_t,e_k\rangle\mid^2$ over $k\in \mathbb{Z}^2,$ and obtain 
	\begin{align*}
		\|\bar{\eta}_t\|_{H^{-\alpha-1}}^2	&=\sum_{i=1}^{3}J_i(t)+L_t,
	\end{align*}
	where \begin{align*}
		J_1(t)&\assign-2\int_{0}^{t}\sum_{k \in \mathbb{Z}^2}|k|^2(1+|k|^2)^{-\alpha-1}\mid \langle\bar{\eta}_s,e_k\rangle\mid^2\mathd s,
		\\J_2(t)&\assign\sum_{k \in \mathbb{Z}^2}(1+|k|^2)^{-\alpha-1}\int_{0}^{t}\langle \bar{\eta}_s, e_{-
			k} \rangle  \bigg[\sqrt{- 1} k \cdot\langle K \ast \bar{\eta}_sv_s,e_k\rangle\\&+ \sqrt{- 1} k \cdot\langle K \ast v_s \bar{\eta}_s,e_k\rangle\bigg] + \langle \bar{\eta}_s, e_k \rangle  \bigg[-\sqrt{- 1} k\cdot\langle K \ast \bar{\eta}_sv_s,e_{-k}\rangle \\&- \sqrt{- 1} k \cdot\langle K \ast v_s \bar{\eta}_s,e_{-k}\rangle\bigg]\mathd s,
		\\J_3(t)&\assign\int_{0}^{t}\|\sigma\cdot\nabla\bar{\eta}_s\|_{H^{-\alpha-1}}^2+\left\langle \sigma\cdot \nabla(\sigma\cdot\nabla\bar{\eta}_s),\bar{\eta}_s\right\rangle_{H^{-\alpha-1}}\mathd s,
	\end{align*}
	and $L_t\assign\int_{0}^{t}\left\langle \sigma\cdot \nabla\bar{\eta}_s,\bar{\eta}_s\right\rangle_{H^{-\alpha-1}}\mathd W_s$ is a continuous local martingale.
	
	Applying Young's inequality, we find  for every $\eps>0,$  there exists a constant $C_{\eps}$ such that
	\begin{align*} 
		\int_{0}^{t}\sqrt{- 1} k\cdot\langle \bar{\eta}_s, e_{-
			k} \rangle   \langle K \ast \bar{\eta}_sv_s,e_k\rangle\mathd s&\leq\eps\int_{0}^{t}|k|^2|\langle \bar{\eta}_s, e_{-
			k} \rangle |^2\mathd s+C_{\eps}\int_{0}^{t} |\langle K \ast \bar{\eta}_sv_s,e_k\rangle|^2\mathd s.
	\end{align*}
Consequently, we have
	\begin{align*}
		&\int_{0}^{t}	\sum_{k \in \mathbb{Z}^2}(1+|k|^2)^{-\alpha-1}\sqrt{- 1} k\cdot\langle \bar{\eta}_s, e_{-
			k} \rangle \langle K \ast \bar{\eta}_sv_s,e_k\rangle\mathd s
		\\&\leq \eps\int_{0}^{t}\|\eta_s\|^2_{H^{-\alpha}}\mathd s+C_\eps \int_{0}^{t}\| K \ast \bar{\eta}_sv_s\|_{H^{-\alpha-1}}^2\mathd s.
	\end{align*}
	By Lemma \ref{lemma embedding}, Lemma  \ref{lemma triebel} and \cite[Theorem A.1.3]{breit2018stochastically}, for $2<\alpha<3,$
	we know 
	\begin{align*}
		\| K \ast \bar{\eta}_sv_s\|_{H^{-\alpha-1}}&\leq\|K \ast \bar{\eta}_sv_s\|_{H^{-\alpha}}
		\\&\lesssim \|K \ast \bar{\eta}_s\|_{H^{-\alpha}}\|v_s\|_{H^{4}}
		\\&=\|\nabla^{\perp} (-\Delta)^{-1}\bar{\eta}_s\|_{H^{-\alpha}}\|v_s\|_{H^{4}}
		\\&\lesssim\|\bar{\eta}_s\|_{H^{-\alpha-1}}\|v_s\|_{H^{4}}.
	\end{align*}
Here we used the fact   Biot-Savart kernel $K=\nabla^{\perp}G,$ where $G$ is the Green function of $-\Delta$ on torus, as given in \cite{flandoli2011full}.
	We thus conclude that \begin{align*}
		\int_{0}^{t}	\sum_{k \in \mathbb{Z}^d}(1+|k|^2)^{-\alpha-1}\sqrt{- 1} k\cdot \langle \bar{\eta}_s, e_{-
			k} \rangle \langle K \ast \bar{\eta}_sv_s,e_k\rangle\mathd s
		&\leq \eps\int_{0}^{t}\|\eta_s\|^2_{H^{-\alpha}}\mathd s+C_\eps \int_{0}^{t} \|v_s\|^2_{H^{4}}\|\bar{\eta}\|_{H^{-\alpha-1}}^2 \mathd s.
	\end{align*}
The other three terms in $J_2(t)$ can be controlled in a similar way. We then obtain that   for every $\eps>0,$  there exists a constant $C_{\eps}$ such that for $2<\alpha<3,$
	\begin{align*}
		J_2(t)&\leq \eps\int_{0}^{t}\|\eta_s\|^2_{H^{-\alpha}}\mathd s+C_{\eps} \int_{0}^{t} \|v_s\|^2_{H^{4}}\|\bar{\eta}\|_{H^{-\alpha-1}}^2 \mathd s.
	\end{align*}
	 Choosing $\eps<2,$  we conclude that  for $2<\alpha<3,$
	\begin{align*}
		J_1(t)+J_2(t)&=-2\int_{0}^{t}\sum_{k \in \mathbb{Z}^2}(1+|k|^2)^{-\alpha}\mid \langle\bar{\eta}_s,e_k\rangle\mid^2\mathd s+2\int_{0}^{t}\sum_{k \in \mathbb{Z}^2}(1+|k|^2)^{-\alpha-1}\mid \langle\bar{\eta}_s,e_k\rangle\mid^2\mathd s+J_2(t)
		\\&\leq(\eps-2)\int_{0}^{t}\|\eta_s\|^2_{H^{-\alpha}}\mathd s+C_{\eps} \int_{0}^{t} \|v_s\|^2_{H^{4}}\|\bar{\eta}\|_{H^{-\alpha-1}}^2 \mathd s+ 2\int_{0}^{t} \|\bar{\eta}\|_{H^{-\alpha-1}}^2 \mathd s
		\\&\lesssim\int_{0}^{t} (\|v_s\|^2_{H^{4}}+1)\|\bar{\eta}\|_{H^{-\alpha-1}}^2 \mathd s.
	\end{align*}
	Furthermore, applying \cite[Lemma 2.3 ]{krylov2015filtering} about commutator estimate yields 
	\begin{align*}
		J_3(t)\leq C_{\sigma} \int_{0}^{t} \|\bar{\eta}\|_{H^{-\alpha-1}}^2 \mathd s.
	\end{align*}
	Consequently, we finally have \begin{align*}
		\|\bar{\eta}_t\|_{H^{-\alpha-1}}^2\lesssim \int_{0}^{t}(\|v_s\|^2_{H^{4}}+1)\|\bar{\eta}\|_{H^{-\alpha-1}}^2 \mathd s+L_t.
	\end{align*}
	Using stochastic Gronwall's inequality in \cite[Corollary 5.4]{geiss2021sharp}, we have $\sup_{t \in [0, T]}\|\bar{\eta}_t\|_{H^{-\alpha-1}}^2=0\quad \mathbb{P}$-a.s.. The proof is then completed.
\end{proof}

\begin{proof}[Proof of Theorem \ref{thm:main}]	
	Having established the existence of probabilistically weak solution to the fluctuation SPDE \eqref{LimitSpde} and the  pathwise uniqueness for the fluctuation SPDE \eqref{LimitSpde}  given in Theorem \ref{thm:existence} and Theorem \ref{thm:uni} in hand, we apply the general Yamada-Watanabe theorem  {\cite[Theorem
		1.5]{kurtz2014yamada}} to conclude the well-posedness of \eqref{LimitSpde}.
We have shown in Lemma \ref{tightness} that the sequence of laws of $\{\eta^N \}_{N \in \mathbb{N}}$
is tight, and in Theorem \ref{thm:existence} that every limiting point
is a probabilistically weak solution to \eqref{LimitSpde}.  Through the well-posedness of \eqref{LimitSpde} and general Yamada-Watanabe theorem  {\cite[Theorem
	1.5]{kurtz2014yamada}}, we then conclude  that every limiting point is the unique probabilistically strong
solution to \eqref{LimitSpde} and the law of every  limiting point is unique. This establishes the convergence of the fluctuation measures $(\eta^N)_{N\geq 1}$ to  the fluctuation SPDE \eqref{LimitSpde}, completing the proof.

\end{proof}
\subsubsection*{\textbf{Acknowledgement:}}The authors would like to thank Rongchan Zhu, Zhenfu Wang and Lin L\"u for quite useful discussion and advice.

	\bibliographystyle{alpha}
		\bibliography{Fluctuation}
															
														\end{document}